\definecolor{webgreen}{rgb}{0,.5,0}
\definecolor{webbrown}{rgb}{.6,0,0}
\definecolor{RoyalBlue}{cmyk}{1, 0.50, 0, 0}
\newcommand{\T}		{\mathbb{T}}
\newcommand{\D}		{\mathbb{D}}
\newcommand{\C}		{\mathbb{C}}
\newcommand{\N}		{\mathbb{N}}
\newcommand{\Z}		{\mathbb{Z}}
\newcommand{\qandq}{\quad \text{and} \quad}
\newcommand{\qasq}{\quad \text{as} \quad}
\newcommand{\dd}{\mathrm d}
\newcommand{\ic}{\mathrm i}
\newcommand{\cp}{\mathrm{cp}}
\newcommand{\Qn}{{\mathcal Q}_n}
\newcommand{\Rn}{{\mathcal R}_n}
\newcommand{\RS}{\boldsymbol{\mathfrak{S}}}
\newcommand{\am}{\mathfrak a}
\newcommand{\z}{\boldsymbol z}
\newcommand{\s}{\boldsymbol s}
\newcommand{\p}{\boldsymbol p}
\newcommand{\im}{\mathrm{Im}}
\newcommand{\re}{\mathrm{Re}}
\newcommand{\rhy}   {\textnormal{RHP}-${\boldsymbol Y}$}
\newcommand{\rhx}   {\textnormal{RHP}-${\boldsymbol X}$}
\newcommand{\rhz}   {\textnormal{RHP}-${\boldsymbol Z}$}
\newcommand{\rhn}   {\textnormal{RHP}-${\boldsymbol N}$}
\newcommand{\rhp}   {\textnormal{RHP}-$\protect\boldsymbol P_e$}
\newtheorem{theorem}{Theorem}
\newtheorem{proposition}[theorem]{Proposition}
\newtheorem{lemma}[theorem]{Lemma}
\newtheorem*{st}{Theorem (Stahl)}
\newtheorem*{bt}{Theorem (Buslaev)}
\theoremstyle{remark}
\begin{document}

\title[Two-point Pad\'e approximants to piecewise holomorphic functions]{Convergence of two-point Pad\'e approximants to piecewise holomorphic functions}

\author[M. Yattselev]{Maxim L. Yattselev}

\address{Department of Mathematical Sciences, Indiana University-Purdue University Indianapolis, 402~North Blackford Street, Indianapolis, IN 46202}

\address{Keldysh Institute of Applied Mathematics, Russian Academy of Science, Miusskaya Pl. 4, Moscow, 125047 Russian Federation}

\email{\href{mailto:maxyatts@iupui.edu}{maxyatts@iupui.edu}}

\thanks{The research was supported in part by a grant from the Simons Foundation, CGM-354538, and by Moscow Center for Fundamental and Applied Mathematics, Agreement with the Ministry of Science and Higher Education of the Russian Federation, No. 075-15-2019-1623.}

\subjclass[2000]{42C05, 41A20, 41A21}

\keywords{two-point Pad\'e approximation, orthogonal polynomials, non-Hermitian orthogonality, strong asymptotics, S-contours, matrix Riemann-Hilbert approach}

\maketitle

\begin{abstract}
Let \( f_0 \) and \( f_\infty \) be formal power series at the origin and infinity, and \( P_n/Q_n \), with \( \deg(P_n),\deg(Q_n)\leq n \), be a rational function that simultaneously interpolates \( f_0 \) at the origin with order \( n \) and \( f_\infty \) at infinity with order \( n+1 \). When germs \( f_0,f_\infty \) represent multi-valued functions with finitely many branch points, it was shown by Buslaev \cite{Bus13} that there exists a unique compact set \( F \) in the complement of which the approximants converge in capacity to the approximated functions. The set \( F \) might or might not separate the plane. We study  uniform convergence of the approximants for the geometrically simplest sets \( F \) that do separate the plane.
\end{abstract}

\maketitle

\section{Introduction}
\label{s1}

Let \( f(z) = \sum_{k=0}^\infty f_kz^{-k} \) be a formal power series at infinity and \( M_n/N_n \) be a rational function such that $\deg(M_n),\deg(N_n)\leq n$ and
\[
(N_nf-M_n)(z) = \mathcal{O}\big(z^{-n-1}\big) \qasq z\to\infty.
\]
It is known that such a rational function is unique and is called the \emph{classical diagonal Pad\'e approximant} to \( f \) at infinity. The following theorem\footnote{In the introduction, only abridged statements of the theorems that suits our needs are stated.} summarizes one the most fundamental contributions of Herbert Stahl to complex approximation theory \cite{St85,St85b,St86,St97}.

\begin{st}
Assume that the germ at infinity \( f \) can be analytically continued along any path in \( \overline\C \) that avoids a fixed polar set\footnote{See \cite{Ransford} for notions of potential theory.} and there is at least one point outside of this set with at least two distinct continuations. Then there exists a compact set \( F \) such that
\begin{itemize}
\item[(i)] \( F \) does not separate the plane and \( f \) has a holomorphic and single-valued extension into the domain \( D:=\overline\C\setminus F \);
\item[(ii)] \( F \) consists of open non-intersecting analytic arcs \( J_i \), their endpoints, and a subset of the singular set of \( f \), and\footnote{Such sets \( F \) are now called \emph{symmetric contours} or \emph{S-curves}.}
\[
\frac{\partial g_F(z,\infty)}{\partial n_+} = \frac{\partial g_F(z,\infty)}{\partial n_-}
\]
at each point \( z\in\cup_i J_i \), where \( g_F(\cdot,\infty) \) is the Green function for \( D \) with pole at infinity and \( \partial/\partial n_\pm \) are the one-sided normal derivatives;
\item[(iii)] it holds for any compact set \( V\subset D \) that 
\[
\lim_{n\to\infty}\cp\left\{z\in V :\big|(f-M_n/N_n)(z)\big|^{1/2n}\geq e^{-g_F(z,\infty)}+\epsilon\right\} = 0
\]
for any \( \epsilon>0 \), where \( \cp(\cdot) \) is the logarithmic capacity.
\end{itemize}
\end{st}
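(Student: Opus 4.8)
The plan is to exploit the classical link between diagonal Pad\'e approximants and non-Hermitian orthogonal polynomials, and to combine it with a potential-theoretic extremal problem that singles out the contour \( F \). First I would \emph{define} \( F \) variationally: among all compact sets \( K\subset\overline\C \) in whose complement the germ \( f \) admits a single-valued holomorphic continuation, seek one minimizing the logarithmic capacity \( \cp(K) \). The admissibility hypothesis (continuation along every path avoiding a fixed polar set, with at least one genuinely multi-valued point) guarantees this family is nonempty, and a compactness/subharmonicity argument produces a minimizer \( F \) that does not separate the plane and is unique; this is exactly assertion~(i). Writing \( D:=\overline\C\setminus F \) and using that \( f \) is single-valued and holomorphic there, the Cauchy--Plemelj formula deformed onto \( F \) gives the representation
\[
f(z)=f(\infty)+\int_F\frac{\dd\mu(t)}{z-t},
\]
where \( \dd\mu \) is the (complex) jump measure of \( f \) across the arcs \( J_i \), so that \( f \) is, up to a constant, the Cauchy transform of \( \mu \).

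Next I would establish the S-property~(ii). This is the variational heart of the argument: perturbing the extremal contour by a smooth infinitesimal vector field and differentiating the capacity functional, the stationarity of the minimizer forces the balance
\[
\frac{\partial g_F(z,\infty)}{\partial n_+}=\frac{\partial g_F(z,\infty)}{\partial n_-}
\]
along each arc. A bootstrap regularity argument then shows the \( J_i \) are analytic, realized as trajectories of a quadratic differential whose coefficient is built from the equilibrium density; together with the finiteness of the branch set this yields the claimed structure of \( F \) as finitely many analytic arcs, their endpoints, and a subset of the singular set of \( f \).

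I would then turn to convergence~(iii). Expanding the interpolation condition \( (N_nf-M_n)(z)=\mathcal O(z^{-n-1}) \) shows the denominators satisfy the non-Hermitian orthogonality relations
\[
\int_F t^kN_n(t)\,\dd\mu(t)=0,\qquad k=0,\dots,n-1,
\]
whence the standard error representation
\[
(f-M_n/N_n)(z)=\frac{1}{N_n^2(z)}\int_F\frac{N_n^2(t)\,\dd\mu(t)}{z-t}.
\]
The problem reduces to the \( n \)-th root asymptotics of \( N_n \). I would prove that the normalized zero-counting measures converge weak-\( * \) to the equilibrium measure \( \omega_F \) of \( F \), so that \( |N_n(z)|^{1/n}\to\cp(F)\,e^{g_F(z,\infty)} \) off \( F \), using \( g_F(z,\infty)=\int\log|z-t|\,\dd\omega_F(t)-\log\cp(F) \). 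Inserting this into the error formula, the \( N_n^2(z) \) in the denominator contributes the decay \( e^{-g_F(z,\infty)} \) at the correct rate \( 2n \), while on \( F \) one has \( g_F\equiv 0 \), so the integral factor contributes only \( \cp(F) \) to the power \( 1/2n \); the two effects combine to give \( |(f-M_n/N_n)(z)|^{1/2n}\to e^{-g_F(z,\infty)} \) away from an exceptional set. Convergence is only \emph{in capacity} because zeros of \( N_n \) stranded in \( D \) (spurious poles) and points where the integral factor degenerates are confined to sets of vanishing capacity, quantified via the lower-envelope and descent theorems of potential theory.

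The hard part will be the two steps with no counterpart in the Hermitian theory: proving that the extremal contour satisfies the S-property, and proving that the zeros of these \emph{non-Hermitian} orthogonal polynomials still distribute according to \( \omega_F \). Without sign-definiteness one cannot directly localize the zeros of \( N_n \); the resolution is to use the S-symmetry of \( F \) to show, by a potential-theoretic contradiction argument, that no positive fraction of zeros can escape a neighborhood of \( F \), forcing every weak-\( * \) limit of the counting measures to equal \( \omega_F \) and confining the spurious-pole behavior to a set of capacity tending to zero.
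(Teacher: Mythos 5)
This statement is not proved in the paper at all: it is Stahl's theorem, quoted in the introduction in abridged form as background, with the actual proof residing in the cited works \cite{St85,St85b,St86,St97}. Your proposal can therefore only be compared with Stahl's original argument, and in outline it is exactly Stahl's program: define \( F \) by a minimal-capacity extremal problem, derive the S-property variationally, reduce the Pad\'e error to non-Hermitian orthogonality on \( F \), and obtain convergence in capacity from \( n \)-th root asymptotics of the denominators.

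Measured against that program, however, each load-bearing step in your sketch is asserted rather than proved, and one of them would fail as stated. The representation \( f(z)=f(\infty)+\int_F \dd\mu(t)/(z-t) \) is not available in the stated generality: the singular set is only assumed polar, so it may be infinite and the singularities need not be of integrable jump type (essential singularities and accumulating branch points are allowed), hence the ``jump of \( f \) across the arcs'' need not define a finite complex measure, and the error formula \( (f-M_n/N_n)(z)=N_n^{-2}(z)\int_F N_n^2(t)\,\dd\mu(t)/(z-t) \) cannot serve as the starting point; Stahl works directly with the remainder \( N_nf-M_n \) and contour deformation, avoiding any such representation of \( f \). Likewise, ``a compactness/subharmonicity argument produces a minimizer \( F \) that \ldots is unique'' compresses the content of \cite{St85,St85b}: the admissible family of compacta is not compact in a topology for which capacity has the needed semicontinuity, and existence together with uniqueness of the extremal domain is a deep result, not a routine variational fact. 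Finally, the claim that the S-property forces every weak-\( * \) limit of the zero-counting measures of the non-Hermitian orthogonal polynomials to be \( \omega_F \) is precisely the hardest part of \cite{St86,St97}; naming ``a potential-theoretic contradiction argument'' identifies the right mechanism but supplies none of it. In short: the skeleton is the correct one, but all three pillars --- existence/uniqueness of the extremal contour, the S-property, and the zero distribution without sign-definiteness --- are left as gaps, and the measure-theoretic reduction you interpose is not valid under the hypotheses of the theorem.
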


More generally, if we select a branch of \( f \) which is holomorphic in a neighborhood of a certain closed set as well as a collection of \( 2n+1 \) not necessarily distinct points in this set, one can define a multipoint Pad\'e approximant interpolating \( f \) at these points. An analog of Stahl's theorem for multipoint Pad\'e approximants was proven by Gonchar and Rakhmanov \cite{Grakh87}. However, the existence of the set \( F \) satisfying (i) and an appropriate generalization of (ii) was not shown but assumed in \cite{Grakh87}, from which the conclusion (iii) was then obtained (see \cite{Rakh12,BStY12,uY} for results on existence of such weighted symmetric contours). 

Weighted symmetric contours are characterized as contours minimizing certain weighted logarithmic capacity.  One of the major obstructions in proving a general theorem on their existence lies in the fact that a minimizer can separate the plane. In \cite{Bus13}, Buslaev treated this possibility not as a hindrance but as an important feature. More precisely, let \( f_0 \) and \( f_\infty \) be formal power series at the origin and infinity, respectively. That is,
\begin{equation}
\label{fs}
f_0(z) := \sum_{k=0}^\infty f_{k,0}z^k \quad \text{and} \quad f_\infty(z) := \sum_{k=0}^\infty f_{k,\infty}z^{-k}.
\end{equation}
A rational function $P_n/Q_n$ is a \emph{two-point Pad\'e approximant of type $(n_1,n_2)$}, $n_1+n_2=2n+1$, to the pair $(f_0,f_\infty)$ if $\deg(P_n),\deg(Q_n)\leq n$ and
\begin{equation}
\label{Pade}
\left\{
\begin{array}{ll}
(Q_nf_0-P_n)(z) = \mathcal{O}(z^{n_1}), & z\to 0, \medskip \\
(Q_nf_\infty-P_n)(z) = \mathcal{O}(z^{n-n_2}), & z\to\infty.
\end{array}
\right.
\end{equation}
As in the case of the classical Pad\'e approximants, the ratio \( P_n/Q_n \) is always unique. In \cite[Theorem~1]{Bus13}, see also \cite{Bus15} for a generalization to \( m \)-point Pad\'e approximants, Buslaev proved the following.

\begin{bt}
Assume that the germs \( f_0 \) and \( f_\infty \) in \eqref{fs} can be analytically continued along any path in \( \overline\C \) that avoids finitely many fixed points one of which is a branch point of \( f_0 \) and another is a branch point of \( f_\infty \). Then there exists a compact set \( F \) such that
\begin{itemize}
\item[(i)] \( \overline\C\setminus F =D_0\cup D_\infty \), where the domains \( D_0\ni 0 \) and \( D_\infty \ni\infty \) either do not intersect or coincide, and \( f_e \) has a holomorphic and single-valued extension into \( D_e \), \( e\in\{0,\infty\} \) (if \( D_0=D_\infty=D \), then \( f_0,f_\infty \) are analytic continuations of each other within \( D \));
\item[(ii)] \( F \) consists of open analytic arcs and their endpoints and at each point of these arcs it holds that
\[
\frac{\partial \big(g_F(z,0)+g_F(z,\infty)\big)}{\partial n_+} = \frac{\partial \big(g_F(z,0)+g_F(z,\infty)\big)}{\partial n_-},
\]
where \( g_F(z,0) \) is the Green function for \( D_0 \) with pole at \( 0 \) and \( g_F(z,\infty) \) is the Green function for \( D_\infty \) with pole at infinity;
\item[(iii)] if indices \( n_1+n_2=2n+1 \) in \eqref{Pade} are such that \( \lim_{n\to\infty}n_i/n=1 \), \( i\in\{1,2\} \), and \( \partial D_0\not\subset\partial D_\infty \) together with \( \partial D_\infty\not\subset\partial D_0 \), then it holds on any compact set \( V \subset \C\setminus F \) that
\[
\lim_{n\to\infty}\cp\left\{z\in V :\big|(f-P_n/Q_n)(z)\big|^{1/n}\geq e^{-g_F(z,0)-g_F(z,\infty)}+\epsilon\right\} = 0
\]
for any \( \epsilon>0 \), where \( \cp(\cdot) \) is the logarithmic capacity and \( f=f_e \) in \( D_e \), \( e\in\{0,\infty\} \).
\end{itemize}
\end{bt}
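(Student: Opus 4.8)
The plan is to deduce the convergence statement~(iii) from an asymptotic analysis of the denominators \( Q_n \), organized around the non-Hermitian orthogonality they satisfy and the potential-theoretic description of \( F \) furnished by (i)--(ii). I would begin with the error formula. Setting \( R_n := Q_nf - P_n \), where \( f=f_e \) on \( D_e \), the interpolation conditions~\eqref{Pade} force \( R_n \) to vanish to order \( n_1 \) at the origin and to be \( O(z^{n-n_2}) \) at infinity, so that \( f-P_n/Q_n \) achieves contact of orders \( n_1 \) and \( n_2 \) at the two nodes; since \( Q_n,P_n \) are polynomials and \( f \) is holomorphic off \( F \), one has
\[
(f-P_n/Q_n)(z) = \frac{R_n(z)}{Q_n(z)}, \qquad z\in\C\setminus F,
\]
and the task reduces to comparing the exponential rates of \( R_n \) and \( Q_n \). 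Because \( f \) is piecewise holomorphic with a jump only across the analytic arcs of \( F \), deforming the contours in the residue conditions by Cauchy's theorem converts the high-order vanishing of \( R_n \) into a non-Hermitian orthogonality relation for \( Q_n \): there is a weight \( w_n \) on \( F \), assembled from the boundary values of \( f \) across the arcs and from the two monomial factors that record the interpolation at \( 0 \) and at \( \infty \), for which
\[
\oint_F Q_n(s)\,s^k\,w_n(s)\,\dd s = 0, \qquad k=0,1,\dots,n-1.
\]

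The second, and principal, step is to construct \( F \) and establish (i)--(ii). Here I would set up a weighted equilibrium problem whose external field is dictated by the two fixed poles \( 0 \) and \( \infty \): the arcs of \( F \) ought to be the loci along which \( g_F(\cdot,0)+g_F(\cdot,\infty) \) is balanced across \( F \), and the symmetry relation in~(ii) is exactly the Euler--Lagrange condition singling out \( F \) as a critical point of the corresponding weighted capacity. The decisive feature, due to Buslaev, is that this critical contour need not leave \( \overline\C\setminus F \) connected: one must permit \( F \) to separate the plane into the two domains \( D_0\ni 0 \) and \( D_\infty\ni\infty \), and the variational analysis has to deliver both the disjoint regime \( D_0\cap D_\infty=\varnothing \) and the coincident regime \( D_0=D_\infty \), with the branch points of \( f_0 \) and \( f_\infty \) constraining the admissible topologies. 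Proving existence, uniqueness, and analyticity of \( F \) in the separating regime is the main obstacle: one can no longer lean on a single Green's function, and the usual max--min arguments for S-contours must be recast for the two-pole, possibly disconnected, setting.

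With \( F \) in hand, the third step invokes the Gonchar--Rakhmanov--Stahl theory of non-Hermitian orthogonal polynomials enjoying the S-property. From the orthogonality above together with the symmetry of \( F \) one shows that the normalized counting measures of the zeros of \( Q_n \) converge weakly to the equilibrium measure \( \mu_F \), whence \( |Q_n(z)|^{1/n} \) converges in capacity, on compact subsets of \( \C\setminus F \), to \( \exp(-U^{\mu_F}(z)+c) \); a companion estimate for \( R_n \), whose zeros cluster on \( F \) while its off-\( F \) growth is governed by the same potential, produces the matching rate. The hypothesis \( n_i/n\to 1 \) is what makes the monomial factors in \( w_n \) contribute the \emph{symmetric} combination \( g_F(\cdot,0)+g_F(\cdot,\infty) \) in the limit instead of a lopsided one, while the requirements \( \partial D_0\not\subset\partial D_\infty \) and \( \partial D_\infty\not\subset\partial D_0 \) exclude the degenerate separating topologies---foremost the single separating curve with \( \partial D_0=\partial D_\infty \)---for which the capacity estimate needs separate treatment and which are precisely the configurations taken up in the uniform-convergence analysis of the present paper. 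Forming the quotient of the two \( n \)-th root asymptotics then gives
\[
\big|(f-P_n/Q_n)(z)\big|^{1/n} \longrightarrow e^{-g_F(z,0)-g_F(z,\infty)}
\]
in capacity, which is~(iii); the only genuinely new ingredient beyond the classical one-pole theory is the equilibrium analysis of the separating contour in the second step.
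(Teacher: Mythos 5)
First, a point of order: this statement is not proven in the paper at all --- it is Buslaev's theorem, quoted (in abridged form, as the paper's own footnote warns) from \cite[Theorem~1]{Bus13} as background material, so there is no in-paper proof to measure your attempt against. Judged on its own merits, your proposal reproduces the correct architecture --- linearized error \( R_n=Q_nf-P_n \), contour deformation converting \eqref{Pade} into non-Hermitian orthogonality with a varying weight (compare \eqref{ortho}, where the monomial factor \( s^{k-n} \) indeed records the interpolation at the origin), a contour with the S-property for the field of the two Green functions, and then \( n \)-th root asymptotics in the Gonchar--Rakhmanov--Stahl style. But it has a genuine hole exactly where the theorem lives. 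The existence, uniqueness and analytic structure of the stationary compact \( F \), i.e.\ parts (i)--(ii), which you yourself call ``the main obstacle,'' is disposed of with the phrase that the usual max--min arguments ``must be recast for the two-pole, possibly disconnected, setting.'' That recasting is not an adaptation one can defer: the constructions of \cite{St85,St85b,Rakh12,BStY12} have connectedness of \( \overline\C\setminus F \) built into them, and allowing a separating minimizer is precisely Buslaev's contribution. The same applies to your third step: the theory you invoke \cite{Grakh87,St86} is formulated for contours with connected complement, so ``invoking'' it in the separating regime presupposes the extension that has to be constructed. Two further technical points would need care even granting \( F \): the weight carries the \( n \)-dependent factor \( s^{-n_1} \), so the limiting object is a weighted (external-field) equilibrium problem rather than the bare equilibrium measure \( \mu_F \) of your sketch; and in the separating regime \( Q_n \) can have spurious zeros off \( F \) (the paper's Figure~\ref{N-complex} exhibits one), which is compatible with weak-* convergence of counting measures but is exactly why (iii) is a statement about convergence in capacity and nothing stronger.

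Second, a factual error in your closing remark: the configurations excluded by the hypothesis \( \partial D_0\not\subset\partial D_\infty \) and \( \partial D_\infty\not\subset\partial D_0 \) (a single separating Jordan curve, or \( D_0=D_\infty \)) are \emph{not} the ones studied in this paper. For the Buslaev compacts considered here, \( \partial D_0 \) contains the slit \( F_a \) while \( \partial D_\infty \) contains the slit \( F_{a^{-1}} \), so neither boundary is contained in the other and conclusion (iii) applies in full. What the paper adds is not coverage of cases excluded from (iii), but an upgrade of (iii) --- convergence in capacity at an \( n \)-th root rate --- to the strong asymptotic formulae \eqref{asymptotics1} and \eqref{asymptotics2} along appropriate (sub)sequences of indices, obtained by the Riemann--Hilbert method rather than by potential theory.
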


Let \( f \) be a germ at infinity that can be continued analytically along any path in \( \overline\C \) avoiding finitely many points one of which is  a branch point for some continuation. Set \( f_\infty = f \) and \( f_0 \) to be one of the function elements of \( f \) at the origin. Buslaev's result tells us that independently of the choice of \( f_0 \), the two-point Pad\'e approximants always converge to the approximated pair of germs. However, the region of the convergence can  consist of either one or two components.

The conclusion (iii) of Buslaev's theorem describes the so-called \emph{weak} or \( n \)-th root asymptotics of Pad\'e approximants. The goal of the present work is to establish \emph{strong} asymptotic formulae along appropriate subsequences of indices. In the case where \( F \) does not separate the plane such results were obtained in \cite{LL_OPA88,St99,BY09c,BY10,uY} under various assumptions on the approximated function. Below, we study the case of distinct germs in the simplest geometrical setting, see~Figure~\ref{F}.

\section{Main Results}

Given $a\in\D\setminus\{0\}$, denote by $K$ the \emph{Chebotar\"ev compact} for $\{-1,1,\mathfrak j(a)\}$, where \( \D \) is the open unit disk centered at the origin and $\mathfrak j(z):=(z+z^{-1})/2$ is the Jukovski map. $K$ consists of the critical trajectories of the quadratic differential
\[
-\frac{(\zeta-\mathfrak j(b))\dd \zeta^2}{(\zeta^2-1)(\zeta-\mathfrak j(a))},
\]
where $b\in\D$ is the uniquely determined point (\( \mathfrak j(b) \) is called Chebotar\"ev's center). When \( \im(a)\neq 0 \), the set $K$ consists of three analytic arcs emanating from $\mathfrak j(b)$ and terminating at each of the points $-1,1,\mathfrak j(a)$. When \( a\in(-1,0) \), it holds that \( b=-1 \) and \( K=[-\mathfrak j(a),1] \) while \( b=1 \) and \( K=[-1,\mathfrak j(a)] \) for \( a\in (0,1) \). Buslaev's compact \( F \) corresponding to $a$ in this case is given by
\[
F := \big\{z:\mathfrak j(z)\in K\big\},
\]
see Figures~\ref{F}, \ref{N-real}, and \ref{N-complex}. 
\begin{figure}[ht!]
\centering
\subfigure[]{\includegraphics[scale=.4]{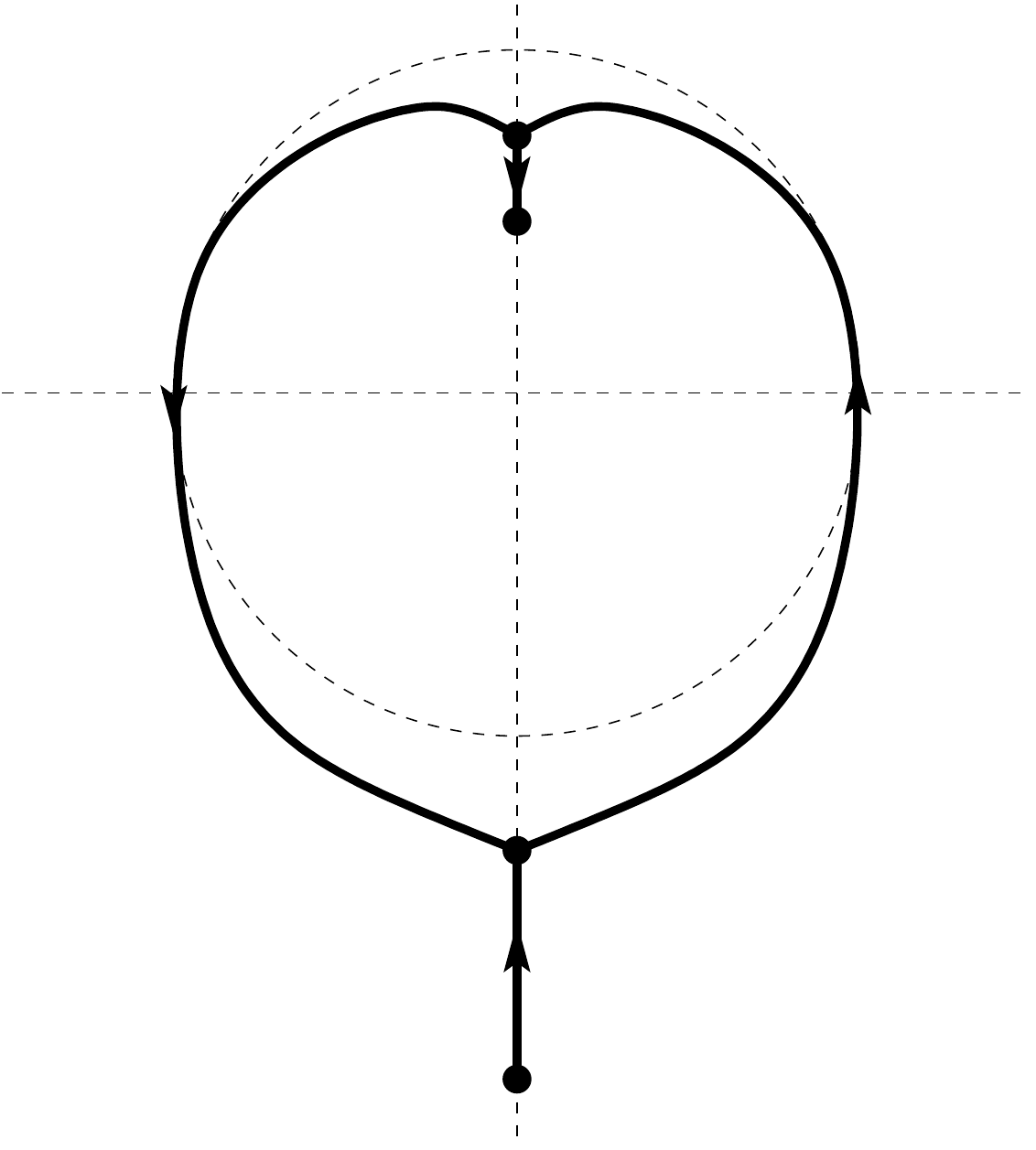}}
\begin{picture}(0,0)
\put(-23,85){$F_1$}
\put(-130,85){$F_{-1}$}
\put(-67,120){$F_a$}
\put(-67,20){$F_{a^{-1}}$}
\put(-71,110){$a$}
\put(-71,132){$b$}
\put(-71,42){$b^{-1}$}
\put(-71,0){$a^{-1}$}
\end{picture}
\quad\quad
\subfigure[]{\includegraphics[scale=.4]{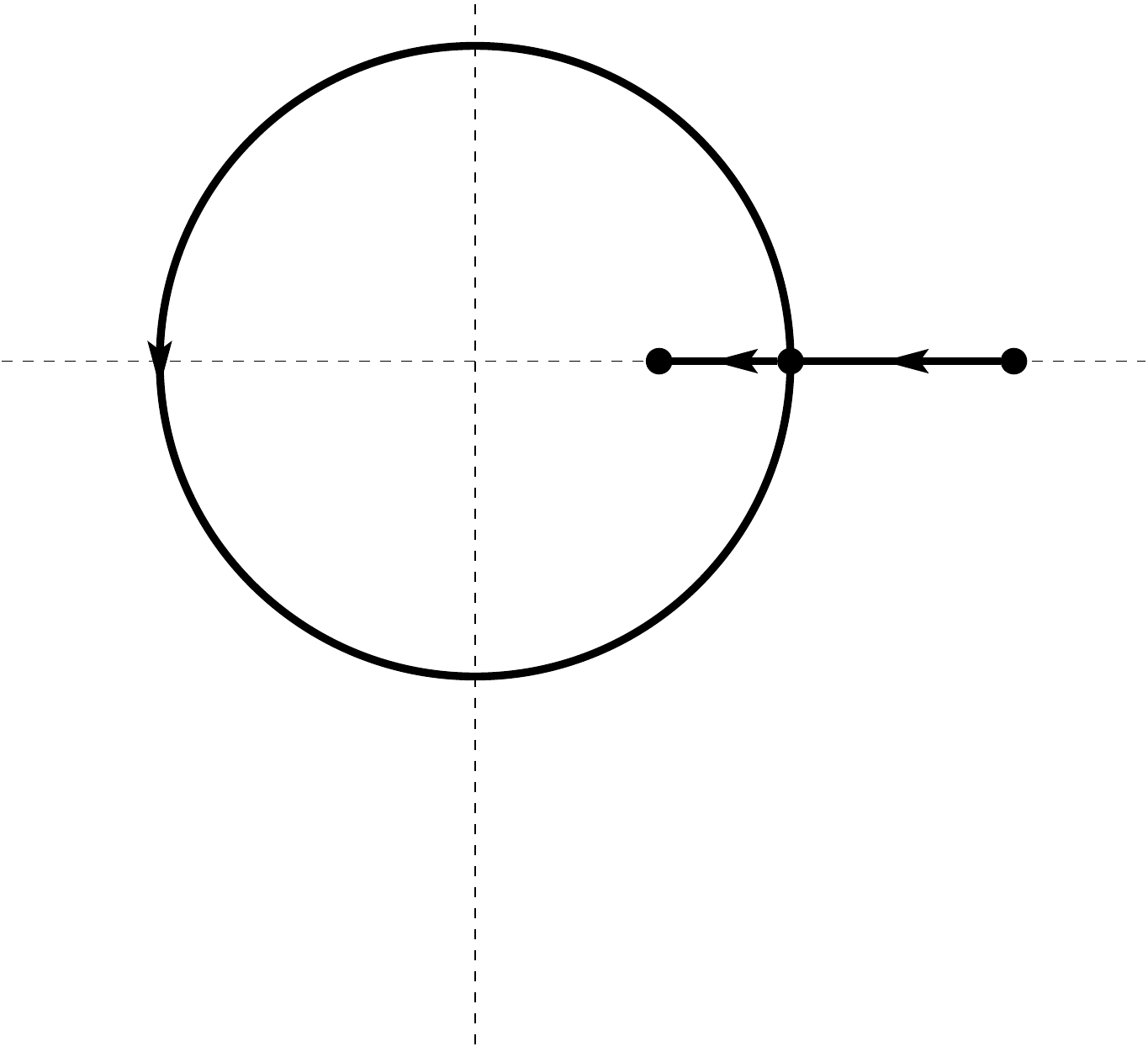}}
\begin{picture}(0,0)
\put(-157,85){$F_{-1}$}
\put(-47,85){$F_{a^{-1}}$}
\put(-68,85){$F_a$}
\put(-73,100){$a$}
\put(-27,100){$a^{-1}$}
\end{picture}
\caption{\small Buslaev's compact $F$ for  a point \( a \) on (a) the imaginary axis (b) the real axis.}
\label{F}
\end{figure}
Clearly, $F$ separates the plane into two simply connected components, one containing the origin and one containing the point at infinity, which we label by $D_0$ and $D_\infty$.  We shall write
\[
F := F^\circ\cup E, \quad E:=\big\{a,a^{-1},b,b^{-1}\big\},
\]
where $F^\circ$ consists of four (three when \( a \) is real) open disjoint Jordan arcs, see Figure~\ref{F}. We let \( F_{\pm1} \) (resp. \( F_{\pm1}^\circ \)) to be closed (resp. open) Jordan arcs connecting \( b \) to \( b^{-1} \) and containing \( \pm1 \), and \( F_{a^{\pm1}} \) (resp. \(F_{a^{\pm1}}^\circ\)) to be closed (resp. open) Jordan arcs connecting \( a^{\pm1} \) to \( b^{\pm1} \) (we have that \( F_{-1}=\varnothing \) when \( a\in (-1,0) \) and \( F_1=\varnothing \) when \( a\in(0,1) \)). We choose their orientation so that \( F_1\cup F_{-1} \) is a counterclockwise oriented Jordan curve and \( F_{a^{-1}}\cup F_1\cup F_a \) is a Jordan arc oriented from \(a^{-1} \) to \( a \), see  Figure~\ref{F}. It is a matter of a simple substitution to see that the set $F^\circ$ is comprised of the critical trajectories of the quadratic differential
\begin{equation}
\label{diff}
-\frac{(z-b)(z-b^{-1})}{(z-a)(z-a^{-1})}\frac{\dd z^2}{z^2}.
\end{equation}

From now on we fix \( a\in \D\setminus\{0\} \) and the corresponding Buslaev's compact \( F \). In this work we shall be interested in approximating pairs \eqref{fs} of the form
\begin{equation}
\label{f}
\big( f_{\rho|D_0}, f_{\rho|D_\infty}\big), \quad f_\rho(z) := \frac{1}{2\pi\ic}\int_F\frac{\rho(s)\dd s}{s-z},
\end{equation}
for some classes of weights \( \rho \) to be specified later. Thus, depending on a situation we shall speak about approximants \eqref{Pade} to either a pair of functions or a single function, in both cases referring to \eqref{f}. As one can clearly see from Figure~\ref{F}, the structure of the set \( F \) is qualitatively different for real and non-real values of \( a \). Hence, it should come as no surprise that the description of the asymptotics of the approximants will also depend on whether the parameter \( a \) is real or not.

\subsection{Asymptotics of the Approximants: \( a\in(-1,0)\cup(0,1) \)}

Recall that in this case \( F \) is a union of the unit circle \( \T \) and the interval joining \( a \) and \( a^{-1} \) that we shall denote by \( [a^{-1},a] \) (always oriented from \( a^{-1} \) to \( a \)). Let us start by describing the class of weights \( \rho \) that we consider. To this end, define
\begin{equation}
\label{wa}
w(z) = w(z;a) := \sqrt{(z-a)(z-a^{-1})}
\end{equation}
to be the branch holomorphic off \( [a^{-1},a] \) such that \( w(z) = z + \mathcal O(1) \) as \( z\to\infty \). Given a function \( \rho \) on \( F \) it will be convenient for us to set\footnote{Given a function \( g \) holomorphic off an oriented Jordan arc or curve \( J \), we denote by \( g_+ \) (resp. \( g_- \)) the traces of \( g \) on the positive (resp. negative) side of \( J \) with respect to the orientation of \( J \).}
\begin{equation}
\label{h-real}
h(s) := \rho(s)\left\{
\begin{array}{rl}
w_\pm(s), & s\in F_{a^{\pm1}}^\circ, \medskip \\
w(s), & s\in F_{-b}^\circ,
\end{array}
\right.
\end{equation}
where \( b:=a/|a| \). We shall say that \( \rho\in\mathcal W_1 \) if \( h \) extends to a holomorphic and non-vanishing function in some neighborhood of \( F\setminus\big\{a,a^{-1}\big\} \) with the zero increment of the argument along the unit circle and there exist real constants \( \alpha,\beta>-1\) such that
\begin{equation}
\label{W1}
h_{|(a^{-1},a)}(s) = \widetilde h(s)(s-a)^{\alpha+1/2}(s-a^{-1})^{\beta+1/2},
\end{equation}
where \( \widetilde h \) is holomorphic and non-vanishing in some neighborhood of \( [a^{-1},a] \) and the branches of the power functions are holomorphic across \( (a^{-1},a) \). It will also be convenient to single out the following subclass of \( \mathcal W_1 \): we shall say that \( \rho\in \mathcal W_2 \subset \mathcal W_1 \) if \( \alpha=\beta=-1/2 \), that is, if \( h \) extends to a holomorphic and non-vanishing function in some neighborhood of \( F \). In particular, if we define \( \rho \) by \eqref{h-real} with \( h(s)\equiv 2 \), we get that the approximated pair of functions is given by \( (1/w,-1/w) \). 

For brevity, let us set \( \sqrt x := \ic \sqrt{|x|} \) when \( x<0 \). As we shall see below, the \( n \)-th root behavior of the approximants is described by the following function:
\begin{equation}
\label{varphi}
\varphi(z) := \frac{z+b+w(z)}{\sqrt{a}+\sqrt{a^{-1}}},
\end{equation}
which is holomorphic off \( [a^{-1},a] \), vanishes at the origin, and has a simple pole at infinity. Moreover, since \( w \) has purely imaginary traces on the interval \( [a^{-1},a] \) and \( w^2(e^{\ic t}) = 2e^{\ic t}(\cos(t)-\mathfrak j(a)) \), it can be easily checked that
\begin{equation}
\label{equilibrium}
\left\{
\begin{array}{rl}
\varphi_+(s)\varphi_-(s)=s, & s\in [a^{-1},a], \medskip \\
|\varphi(s)|=1, & s\in F_{-b}^\circ.
\end{array}
\right.
\end{equation}
To describe the finer behavior of the approximants let us define the following functions. Let \( \log h \) be any smooth continuous branch of the logarithm on the unit circle (recall that \( h \) is continuous on  \( \T \) with the argument that has zero increment). Set
\begin{equation}
\label{Sh-1}
D(z) := \exp\left\{\frac{1}{2\pi\ic}\int_{\T}\frac{\log h(s)\dd s}{s-z}\right\},
\end{equation}
which is a holomorphic and non-vanishing function off \( \T \), that is smooth in the entire plane, and satisfies \( D_+(s)=D_-(s)h(s) \) on the unit circle, see \cite[Chapter~I]{Gakhov}. Therefore, the function
\[
\widehat h(s) := D^2(s)h^{\mp1}(s), \quad s\in F_{a^{\pm1}},
\]
is non-vanishing and smooth on the interval \( (a^{-1},a) \) and admits a continuous determination of its logarithm. Thus, the function
\begin{equation}
\label{Sh-2}
S(z) : = \exp\left\{\frac{w(z)}{2\pi\ic}\int_{[a^{-1},a]}\frac{\log\widehat h(s)}{s-z}\frac{\dd s}{w_+(s)}\right\}
\end{equation}
is a holomorphic and non-vanishing function off the interval \( [a^{-1},a] \) with traces satisfying \(S_+(s)S_-(s)=\widehat h(s)\). Given \eqref{varphi}, \eqref{Sh-1}, and \eqref{Sh-2}, let us put
\begin{equation}
\label{calQn-real}
\Qn(z) := \left\{
\begin{array}{rl}
(z/\varphi(z))^nS(z)/D(z), & z\in D_0, \medskip \\ 
\varphi(z)^nD(z)/S(z), & z\in D_\infty,
\end{array}
\right.
\end{equation}
which is a holomorphic and non-vanishing function in \( \C\setminus F \) with a pole of order \( n \) at infinity and
\begin{equation}
\label{calRn-real}
\Rn(z) := \left\{
\begin{array}{rl}
(\varphi(z)/z)^nD(z)/S(z), & z\in D_0, \medskip \\ 
-(1/\varphi(z))^nS(z)/D(z), & z\in D_\infty,
\end{array}
\right.
\end{equation}
which is a holomorphic and non-vanishing function in \( \C\setminus F \) with a zero of multiplicity \( n \) at infinity (one can clearly see that \( \Rn \) is essentially a reciprocal of \( \Qn \) and therefore this definition might appear superfluous, however, for not real \( a \) the relation between these two functions will not be as straightforward and we prefer to present the cases of real and non-real parameters in a uniform fashion). 

\begin{theorem}
\label{thm:main1}
Denote by \( P_n/Q_n \) the two-point Pad\'e approximant of type \( (n,n+1) \)  to \eqref{f} with \( \rho\in \mathcal W_1 \). Set
\begin{equation}
\label{Rn}
R_n(z)  :=  z^{-n}(Q_nf_\rho-P_n)(z), \quad z\in\overline\C\setminus F.
\end{equation}
Then for all \( n \) large enough the polynomial \( Q_n \) has degree \( n \) and can be normalized to be monic. In this case, it holds that
\begin{equation}
\label{asymptotics1}
\left\{
\begin{array}{rll}
Q_n &=& \gamma_n(1+\upsilon_{\Qn}\big)\Qn, \medskip \\
wR_n & = & \gamma_n\big(1+\upsilon_{\Rn}\big)\Rn,
\end{array}
\right.
\end{equation}
locally uniformly in \( \overline\C\setminus F \), where the error rate functions \( \upsilon=\upsilon_{\Qn},\upsilon_{\Rn} \) satisfy
\begin{equation}
\label{error-rate}
|\upsilon(z)| \leq C\left\{
\begin{array}{ll}
1/n, & \rho\in\mathcal W_1, \medskip \\
c^n, & \rho\in\mathcal W_2,
\end{array}
\right.
\end{equation}
for some constants \( C>0 \) and \( c<1 \), and \( \gamma_n \) is a constant such that the limit \( \lim_{z\to\infty}\gamma_n\Qn(z)z^{-n} = 1 \) holds.
\end{theorem}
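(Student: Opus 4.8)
The plan is to realize the theorem as a \( 2\times 2 \) matrix Riemann--Hilbert problem and to run the Deift--Zhou nonlinear steepest descent method. First I would encode \( Q_n \) and \( R_n \) from \eqref{Rn} into the first row of a matrix \( Y \) solving a problem \rhy: \( Y \) is holomorphic off \( F \), has the jump \( \left(\begin{smallmatrix} 1 & \rho(s)/s^n \\ 0 & 1 \end{smallmatrix}\right) \) across \( F \), normalization \( Y(z)z^{-n\sigma_3}\to I \) at infinity, and a prescribed mild behavior at the origin reflecting the two-point structure. The point is that, via Sokhotski--Plemelj, the interpolation conditions \eqref{Pade} of type \( (n,n+1) \) are equivalent to the non-Hermitian orthogonality relations \( \int_F s^k Q_n(s)\rho(s)s^{-n}\,\dd s = 0 \), \( 0\le k\le n-1 \); here the factor \( s^{-n} \) (well defined since \( 0\notin F \)) plays the role of an \( n \)-dependent external field with charges at \( 0 \) and \( \infty \), which is exactly why the relevant equilibrium problem involves both Green functions and why \( \varphi \) of \eqref{varphi} governs the \( n \)-th root behavior. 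Solvability of \rhy{} for large \( n \) is equivalent to \( \deg Q_n=n \), which I would deduce a posteriori from the analysis together with uniqueness.

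Second, I would carry out the usual chain of transformations. The opening move \( Y\mapsto T \) renormalizes by powers of \( \varphi \) (schematically \( (z/\varphi)^n \) in \( D_0 \) and \( \varphi^n \) in \( D_\infty \), matching \eqref{calQn-real}); the equilibrium relations \eqref{equilibrium}, namely \( \varphi_+\varphi_-=s \) on \( [a^{-1},a] \) and \( |\varphi|=1 \) on \( F_{-b}^\circ \), are precisely what render the transformed jumps bounded and oscillatory. Opening lenses around the arcs of \( F \) (\( T\mapsto S \)) factors the jump on \( [a^{-1},a] \) into triangular pieces exponentially close to the identity off \( F \), and I would absorb the weight \( h \) using the Szeg\H{o} functions \( D \) and \( S \) of \eqref{Sh-1} and \eqref{Sh-2}: \( D \) clears \( h \) on the circle and \( S \) clears \( \widehat h \) on the interval, leaving constant jumps on \( F \). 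The model problem \rhn{} with these constant jumps is solved explicitly, and undoing the Szeg\H{o} and \( \varphi \) substitutions produces exactly \( \Qn \) and \( \Rn \) of \eqref{calQn-real} and \eqref{calRn-real}; the factor \( w \) multiplying \( R_n \) in \eqref{asymptotics1} appears because the Cauchy transform inherits a \( 1/\sqrt{\cdot} \) edge singularity from the weight, so that \( wR_n \), not \( R_n \), is the object with clean asymptotics.

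Third is the local analysis, which I expect to be the crux. After dividing \( h \) by \( w \), the weight \( \rho \) behaves near the terminal branch points like a Jacobi weight \( (s-a)^{\alpha}(s-a^{-1})^{\beta} \), so for \( \rho\in\mathcal W_1 \) I would build Bessel local parametrices at \( a,a^{-1} \) of the corresponding orders; their matching onto \rhn{} on small circles carries an error \( \mathcal O(1/n) \), and this is the sole source of the first rate in \eqref{error-rate}. When \( \alpha=\beta=-1/2 \), i.e. \( \rho\in\mathcal W_2 \), the weight reduces to an analytic function divided by \( w \), the model \rhn{} already exhibits the correct \( (\cdot)^{-1/4} \) edge behavior, and no Bessel parametrix is needed at \( a,a^{-1} \); this is what upgrades the rate to geometric. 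The genuinely new obstacle, absent in the non-separating setting of \cite{LL_OPA88,St99,BY09c,BY10,uY}, is the point \( b=b^{-1}=\pm1 \), where the interval \( [a^{-1},a] \) crosses the unit circle transversally: one must construct a local parametrix at an honest self-intersection of \( F \), matching two pairs of jumps simultaneously. I would engineer this parametrix so that its mismatch with \rhn{} is geometrically small, ensuring that \( b \) never degrades the \( \mathcal W_2 \) rate.

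Finally, setting \( R:=S\mathcal P^{-1} \), where \( \mathcal P \) is the global parametrix glued to the local ones, yields a problem whose jumps are \( I+\mathcal O(1/n) \) for \( \mathcal W_1 \) and \( I+\mathcal O(c^n) \) for \( \mathcal W_2 \), uniformly, supported on lens boundaries (exponentially small) and on the parametrix circles. The small-norm theorem for Riemann--Hilbert problems then gives \( R=I+\upsilon \) with \( \upsilon \) controlled as in \eqref{error-rate}, locally uniformly in \( \overline\C\setminus F \). Unwinding \( S \), \( T \), \( Y \) converts this into \eqref{asymptotics1}, and reading off the top-left entry as \( z\to\infty \) both certifies that \( Q_n \) has exact degree \( n \) and may be taken monic for large \( n \), and fixes \( \gamma_n \) through \( \lim_{z\to\infty}\gamma_n\Qn(z)z^{-n}=1 \).
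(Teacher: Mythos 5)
Your overall architecture is the same as the paper's: the interpolation conditions are converted into the non-Hermitian orthogonality \eqref{ortho}, encoded in the Fokas--Its--Kitaev matrix \eqref{Y}, lenses are opened on curves \( \Gamma_0\subset D_0 \), \( \Gamma_\infty\subset D_\infty \), the global parametrix is assembled from \( \varphi \), \( D \), \( S \) (the paper's \( \boldsymbol M \) in \eqref{N}, with entries \eqref{calQn-real}, \eqref{calRn-real}, \eqref{calQn-star-real}, \eqref{calRn-star-real}), Bessel-type parametrices are installed at \( a,a^{-1} \) only when \( \rho\in\mathcal W_1 \), and a small-norm argument yields \eqref{error-rate}; your identification of the two sources of error (matching circles give \( 1/n \), lens boundaries give \( c^n \)) is also the paper's.

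The genuine gap is your treatment of the crossing point \( b=\pm1 \). You assert that ``one must construct a local parametrix at an honest self-intersection of \( F \)'' and propose to ``engineer'' one with geometrically small mismatch; this is both unnecessary and, as stated, not an argument. In the paper no local analysis is performed at \( b \) at all: the lens curves are compactly contained in \( D_0, D_\infty \) (touching \( F \) only at \( a,a^{-1} \) in the \( \mathcal W_1 \) case), so near \( b \) their jumps remain uniformly geometrically small, while \( \boldsymbol M \) satisfies the exact jump across all four arcs of \( F^\circ \) meeting at \( b \). What genuinely requires proof at \( b \) --- and what your proposal never addresses --- is boundedness. The weight \( \rho \) has four distinct one-sided limits at \( b \) (from \( F_a \), \( F_{a^{-1}} \), and the two sides of the circle), so the Cauchy-type functions \( R_n, R_{n-1}^\star \) would generically acquire logarithmic singularities there, invalidating condition \hyperref[rhy]{\rhy}(c), the uniqueness Lemma~\ref{lem:rhy}, and the removable-singularity step that makes \( \boldsymbol Z \) holomorphic at \( b \). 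The paper's resolution is the cyclic compatibility condition \eqref{sum-cond-real},
\[
\rho_{|F_a}(b) - \rho_{|F_{a^{-1}}}(b) + \rho_{|F_{-b}}(b+) - \rho_{|F_{-b}}(b-) = 0,
\]
which holds automatically for \( \rho\in\mathcal W_1 \) because \( h=\rho w_\pm \) in \eqref{h-real} extends holomorphically across \( b \); equivalently, \( D_+=D_-h \) on \( \T \) makes \( \widehat h \) continuous at \( b \), so the Szeg\H{o} function \( S \) of \eqref{Sh-2}, and hence \( \boldsymbol M \), is well behaved there. A local parametrix cannot be ``engineered'' independently of this identity: a bounded local solution near \( b \) exists only if the jumps meeting at \( b \) are compatible, which is exactly \eqref{sum-cond-real}; once it holds, no local construction is needed. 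By recasting the one structural novelty of the real-\( a \) analysis as an unspecified engineering problem, your proposal leaves precisely this step unproved.
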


We can immediately see from \eqref{calQn-real}, \eqref{calRn-real}, and \eqref{asymptotics1} that
\begin{equation}
\label{error}
(f_\rho-P_n/Q_n)(z) = \frac{1+o(1)}{w(z)}\left\{
\begin{array}{rl}
(\varphi^2(z)/z)^n(SD)^2(z), & z\in D_0, \medskip \\ 
-(z/\varphi^2(z))^n/(SD)^2(z), & z\in D_\infty.
\end{array}
\right.
\end{equation}
It further can be deduced from \eqref{equilibrium} that the right hand side of the above equality is geometrically small in \( \overline\C\setminus F \).

\begin{figure}[ht!]
\centering
\includegraphics[scale=.4]{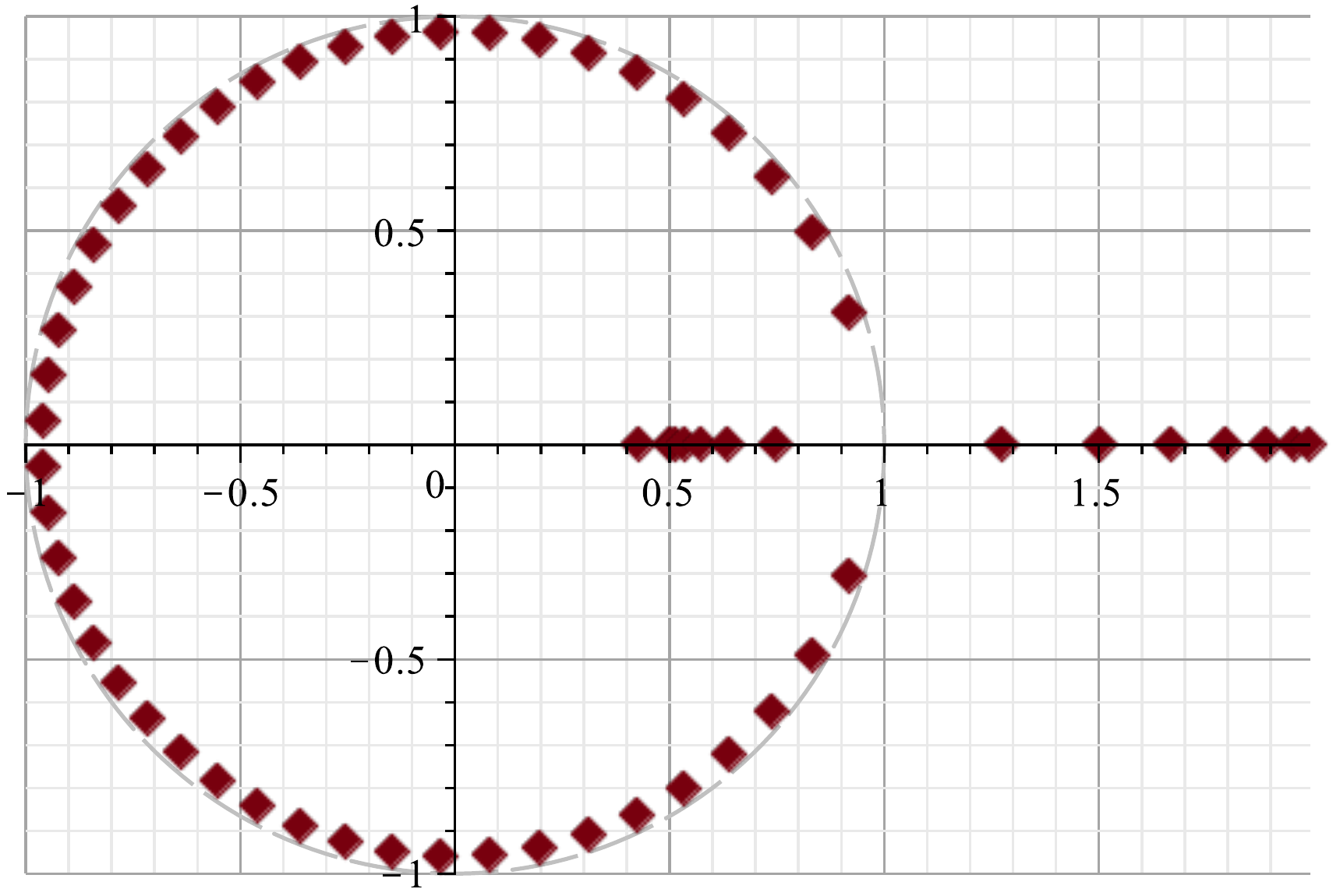}
\caption{\small Zeros of the denominator polynomial \( Q_{60} \) when the approximated pair is given by \( f_0(z) = \log\big(\frac{z-1}{z-1/a}\big) \) and \( f_\infty(z) = \log\big(\frac{z-a}{z-1}\big) \) for \( a=2 \).}
\label{N-real}
\end{figure}

\subsection{Asymptotics of the Approximants: \( a\in\D\setminus(-1,1) \)}

As in the previous subsection, we start by defining the classes of weights \( \rho \) we shall consider. We shall say that \( \rho \in \mathcal W_1 \) if the restriction of \( \rho \) to the arcs \( F_1^\circ,F_{-1}^\circ \) extends to a holomorphic and non-vanishing function around \( F_1,F_{-1} \); there exist constants \( \alpha,\beta>-1 \) such that the restriction of \( \rho \) to \( F_a^\circ,F_{a^{-1}}^\circ \)  is of the form
\begin{equation}
\label{W1-comp}
\widetilde\rho(s)(s-a)^\alpha(s-a^{-1})^\beta
\end{equation}
where \( \widetilde\rho \) is holomorphic and non-vanishing around \( F_a,F_{a^{-1}} \) and the branches of the power functions are holomorphic across \( F_a^\circ,F_{a^{-1}}^\circ \); and
\begin{equation}
\label{sum-cond}
\left\{
\begin{array}{l}
\rho_{|F_a}(z) + \rho_{|F_{-1}}(z)-\rho_{|F_1}(z) \equiv 0, \medskip \\
\rho_{|F_{a^{-1}}}(z) + \rho_{|F_{-1}}(z) - \rho_{|F_1}(z) \equiv 0,
\end{array}
\right.
\end{equation}
in some neighborhood of \( b \) (upper relation) and in some neighborhood of \( b^{-1} \) (lower relation), where, with a slight abuse of notation, we denote by \( \rho_{|F_e}(z) \) not only the restriction of \( \rho \) to \( F_e \), but also its analytic continuation. Unlike the case of the real parameter, the class \( \mathcal W_2 \) will be disjoint from \( \mathcal W_1 \). Let now
\begin{equation}
\label{w}
w(z) = w(z;a) := \sqrt{(z-a)(z-a^{-1})(z-b)(z-b^{-1})}
\end{equation}
be the branch holomorphic in \( \C\setminus(F_{a^{-1}}\cup F_a) \) such that \( w(z) = z^2 + \mathcal O(z) \) as \( z\to\infty \). We shall say that a function \( \rho \) belongs to the class \( \mathcal W_2 \) if
\begin{equation}
\label{h}
h(s) := \rho(s)\left\{
\begin{array}{rl}
w_\pm(s), & s\in F_{a^{\pm1}}, \medskip \\
w(s), & s\in F_{-1}^\circ\cup F_1^\circ,
\end{array}
\right.
\end{equation}
extends to a holomorphic and non-vanishing function in some neighborhood of~\( F \). It is easy to check that when \( h(s)\equiv2 \), we again obtain the pair \( (1/w,-1/w)\).  

The advantage of class \( \mathcal W_2 \) is that the error estimates in the analog of \eqref{asymptotics1} shall be again geometric. However, this class is not very natural as one needs to know the point \( b \) explicitly to define functions in this class, however, finding \( b \) is in general a transcendental problem. The class \( \mathcal W_1 \) is more natural as it for example contains pairs \( (c_0/w_a, c_\infty/w_a) \) for some constants \( c_0\neq\pm c_\infty \), where \( w_a(z) := \sqrt{(z-a)(z-a^{-1})} \) is a branch holomorphic off \( F_{a^{-1}} \cup F_1 \cup F_a \).  

The analogues of the functions \( \varphi \), \( D \), and \( S \) from \eqref{varphi}, \eqref{Sh-1}, and \eqref{Sh-2} are more complicated now as they need to be defined with the help of various differentials on the Riemann surface
\begin{equation}
\label{RS}
\RS := \left\{\z:=(z,w):w^2=(z-a)(z-a^{-1})(z-b)(z-b^{-1})\right\},
\end{equation}
which has genus 1. Moreover, these functions by themselves are not sufficient to define analogs of \( \Qn,\Rn \). Hence, we opt for a different approach. 

The surface \( \RS \) can be realized as two copies of \( \overline\C \) cut along \( F_{a^{-1}}\cup F_a \) and then glued crosswise along the corresponding arcs. We shall denote these copies by \( \RS^{(0)} \) and \( \RS^{(1)} \). Denote by \( \pi:\RS\to\overline\C \) the natural projection \( \pi(\z) =z \). For a point \( z\not\in F_{a^{-1}}\cup F_a \) we also let \( z^{(i)} \) stand for the pull back of \( z \) to \( \RS^{(i)} \). We put 
\[
\boldsymbol\Delta:=\pi^{-1}(F)=\boldsymbol\alpha\cup\boldsymbol\beta\cup\boldsymbol\gamma\cup\boldsymbol\delta,
\]
where \( \boldsymbol\beta :=\pi^{-1}(F_{a^{-1}}) \) is oriented so that \( \RS^{(0)}\setminus\boldsymbol\Delta \) remains on the right when it is traversed in the positive direction, \( \boldsymbol\delta:=\pi^{-1}(F_a) \) is oriented so that \( \RS^{(0)}\setminus\boldsymbol\Delta \) remains on the left when it is traversed in the positive direction, and \( \boldsymbol\alpha:=\pi^{-1}(F_1) \), \( \boldsymbol\gamma:=\pi^{-1}(F_{-1}) \) are oriented so that their positive directions in \( \RS^{(0)} \) coincide with the positive directions of  \( F_1 \), \( F_{-1} \). 

\begin{theorem}
\label{thm:NS}
Let \( h(s) \) be a function on \( F \) for which there exist real constants \( \alpha(e) \), \( e\in E=F\setminus F^\circ \), and the branches of \( (z-e)^{\alpha(e)} \) such that the product
\[
h(s)\prod_{e\in E}(s-e)^{-\alpha(e)}
\]
extends to a non-vanishing H\"older continuous function on \( F \). Then for each \( n\in\N \) there exist a meromorphic  in \( \RS\setminus\boldsymbol\Delta \) function \( \Psi_n \)  and a point \( \z_n \in \RS \) such that 
\begin{itemize}
\item[(i)] \( \Psi_n \) has continuous traces on \( \pi^{-1}(F^\circ) \) that satisfy
\begin{equation}
\label{Psin-jump}
\Psi_{n-}(\s) = \Psi_{n+}(\s)h(s), \quad \s\in\boldsymbol\Delta;
\end{equation}
\item[(ii)]  it has a pole of order \( n\) at \( \infty^{(1)} \) (of order \( n-1 \) if \( \z_n=\infty^{(1)}\)), a simple pole at \( \infty^{(0)} \) (a regular point if \( \z_n=\infty^{(0)}\)), a zero of multiplicity \( n \) at \( 0^{(1)} \) (of multiplicity \(n+1\) if \( \z_n=0^{(1)}\)), a simple zero at \( \z_n\not\in\boldsymbol\Delta\cup\{\infty^{(0)},\infty^{(1)},0^{(1)}\} \) (when $\z_n\in\boldsymbol\Delta$ we use explicit representation \eqref{Psin} for $\Psi_n$ to treat $\z_n$ as a zero for both $\Psi_{n+}$ and $\Psi_{n-}$), and otherwise is non-vanishing and finite;
\item[(iii)] it holds that
\begin{equation}
\label{psin-endp}
\big|\Psi_n\big(z^{(k)}\big)\big|^2 \sim \left\{
\begin{array}{ll}
|z-e|^{(-1)^{1-k}\alpha(e)+m_n(e)} & \text{as} \quad z\to e\in\{a,a^{-1}\}, \medskip \\
|z-e|^{(-1)^{1-k}\alpha(e)+m_n(e)} & \text{as} \quad D_0\ni z\to e\in\{b,b^{-1}\}, \medskip \\
|z-e|^{(-1)^k\alpha(e)+m_n(e)} & \text{as} \quad D_\infty\ni z\to e\in\{b,b^{-1}\},
\end{array}
\right.
\end{equation}
for \( k\in\{0,1\} \), where \( m_n(e):=1 \) when \( \pi(\z_n)=e \) and \( m_n(e):=0 \) otherwise\footnote{The notation \( |g_1(z)|\sim|g_2(z)| \) as \( z\to z_0 \) means that there exists a constant \( C>1 \) such that \( C^{-1} \leq |(g_1/g_2)(z)| \leq C \) in some neighborhood of \( z_0 \).}.
\end{itemize}

Conversely, if \( \Psi \) is a function with  a zero of multiplicity at least \( n \) at \( 0^{(1)} \), a pole of order at most \( n \) at \( \infty^{(1)} \), at most a simple pole at \( \infty^{(0)} \), and no other poles, and if it satisfies \eqref{Psin-jump} and 
\[
\big|\Psi_n\big(z^{(k)}\big)\big|^2 = \left\{
\begin{array}{ll}
\mathcal O\big(|z-e|^{(-1)^{1-k}\alpha(e)}\big) & \text{as} \quad z\to e\in\{a,a^{-1}\}, \medskip \\
\mathcal O\big(|z-e|^{(-1)^{1-k}\alpha(e)}\big) & \text{as} \quad D_0\ni z\to e\in\{b,b^{-1}\}, \medskip \\
\mathcal O\big(|z-e|^{(-1)^k\alpha(e)}\big) & \text{as} \quad D_\infty\ni z\to e\in\{b,b^{-1}\},
\end{array}
\right.
\]
then \( \Psi \) a constant multiple of~\( \Psi_n \).
\end{theorem}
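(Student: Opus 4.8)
The plan is to construct \( \Psi_n \) explicitly as a product of three factors on the genus-one surface \( \RS \) and then to deduce the converse from a Liouville-type argument. Fix a canonical homology basis \( \{\mathfrak a,\mathfrak b\} \) of \( \RS \) compatible with the cycles \( \boldsymbol\alpha,\boldsymbol\beta,\boldsymbol\gamma,\boldsymbol\delta \), let \( \mathcal A(\z)=\int_{\z_0}^{\z}\mathrm d u \) be the Abel map attached to the holomorphic differential \( \mathrm d u \) normalized to have unit \( \mathfrak a \)-period, and let \( \theta \) be the associated Jacobi theta function, so that \( \mathcal A:\RS\to\C/\Lambda \) is a biholomorphism. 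Because \( \RS \) has positive genus, Abel's theorem forbids any meromorphic function from carrying the rigidly prescribed divisor dictated by (ii); exactly one free point is forced, and this is the source of \( \z_n \).

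First I would isolate the large-\( n \) part. Let \( \omega \) be the third-kind differential, normalized to vanishing \( \mathfrak a \)-period, with simple poles of residues \( +1 \) at \( \infty^{(1)} \) and \( -1 \) at \( 0^{(1)} \); then \( \exp\big(n\int^{\z}\omega\big) \) has a pole of order \( n \) at \( \infty^{(1)} \), a zero of order \( n \) at \( 0^{(1)} \), is holomorphic and non-vanishing elsewhere, and is single-valued up to a multiplicative \( \mathfrak b \)-monodromy computable from the Riemann bilinear relations. Next I would absorb the jump: writing \( h=h_0\prod_{e\in E}(s-e)^{\alpha(e)} \) with \( h_0 \) H\"older and non-vanishing, I build a Szeg\H{o} function \( S \) on \( \RS\setminus\boldsymbol\Delta \) by integrating \( \log h_0 \) against a Cauchy kernel on \( \RS \) (a normalized third-kind differential in the running variable), so that \( S_-=S_+h_0 \) across \( \boldsymbol\Delta \), again single-valued only up to \( \mathfrak b \)-monodromy. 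The power factors are incorporated via explicit local models \( (z-e)^{\alpha(e)} \); since the local coordinate at a branch point is \( \sqrt{z-e} \) and the two traces lie on opposite sheets, these produce precisely the sheet-dependent exponents \( (-1)^{1-k}\alpha(e) \) and \( (-1)^{k}\alpha(e) \) of \eqref{psin-endp}. Finally I would repair single-valuedness with the theta quotient
\[
\frac{\theta\big(\mathcal A(\z)-\mathcal A(\z_n)-\mathbf K\big)}{\theta\big(\mathcal A(\z)-\mathcal A(\infty^{(0)})-\mathbf K\big)},
\]
with \( \mathbf K \) the Riemann constant, which contributes a simple zero at \( \z_n \), a simple pole at \( \infty^{(0)} \), is \( \mathfrak a \)-periodic, and has a \( \mathfrak b \)-monodromy depending linearly on \( \mathcal A(\z_n) \) through the quasi-periodicity of \( \theta \). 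Setting \( \Psi_n \) equal to the product of the three factors, the requirement that the total \( \mathfrak b \)-monodromy vanish — so that \( \Psi_n \) is genuinely meromorphic on \( \RS\setminus\boldsymbol\Delta \) with the single jump \eqref{Psin-jump} — is one equation in the Jacobian that determines \( \mathcal A(\z_n) \), hence \( \z_n \), uniquely. Properties (i) and (ii) then follow from a divisor count, the degenerate coincidences \( \z_n\in\{0^{(1)},\infty^{(0)},\infty^{(1)}\} \) yielding the shifted orders recorded in (ii) and the case \( \z_n\in\boldsymbol\Delta \) being read off from the explicit theta representation, while (iii) follows from the local analysis, the term \( m_n(e) \) accounting for a zero that migrates onto a branch point.

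For the converse, suppose \( \Psi \) satisfies the stated one-sided order bounds and carries the same jump \( h \). Then \( \Phi:=\Psi/\Psi_n \) has trivial jump across \( \boldsymbol\Delta \), since \( \Phi_-=\Psi_-/\Psi_{n-}=(\Psi_+ h)/(\Psi_{n+}h)=\Phi_+ \), so \( \Phi \) extends to a single-valued meromorphic function on the compact surface \( \RS \). The hypotheses at \( 0^{(1)},\infty^{(0)},\infty^{(1)} \) show \( \Phi \) has no poles there, the matching \( \mathcal O \)-bounds at the branch points make \( \Phi \) bounded there, and \( \Phi \) has no other poles; hence \( \Phi \) is holomorphic on all of \( \RS \) and therefore constant, giving \( \Psi=C\Psi_n \).

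The main obstacle is the bookkeeping in the existence step: one must simultaneously track the \( \mathfrak b \)-monodromies of the exponential and Szeg\H{o} factors and verify that the single theta quotient cancels their product — this is exactly the solvability of Abel's equation on the genus-one Jacobian and the reason a unique floating zero \( \z_n \) appears — while at the same time confirming that the half-integer ramification of \( \sqrt{z-e} \) converts each prescribed \( \alpha(e) \) into the correct sheet-dependent exponent in \eqref{psin-endp}, with the signs and the \( m_n(e) \)-shift emerging consistently.
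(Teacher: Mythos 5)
Your construction is, up to normalization of the differentials, the paper's own: the solution \eqref{Psin} is exactly a product of an exponentiated third-kind differential with zero divisor \( n\cdot 0^{(1)} \) and pole divisor \( n\cdot\infty^{(1)} \) (the factor \( \Phi^n \) built from Nuttall's differential), a Cauchy-kernel Szeg\H{o} function \( S_h \) from Lemma~\ref{lem:Sh}, and the theta quotient \eqref{Thetan} whose floating zero \( \z_n \) is fixed by the Jacobi inversion problem \eqref{jip}; the paper's extra factor \( A_{n\tau+m_n} \) of Lemma~\ref{lem:Asigma} and the integer unknowns \( j_n,m_n \) in \eqref{jip} merely compensate for the fact that Nuttall's differential is normalized to have purely imaginary periods rather than vanishing \( \boldsymbol\alpha \)-period, so this difference is cosmetic here (the reality of \( \omega,\tau \) is exploited only later, in the proof of Theorem~\ref{thm:NS2}). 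There is, however, a genuine gap in your converse. The function \( \Psi_n \) has a simple zero at the floating point \( \z_n \), so \( \Phi:=\Psi/\Psi_n \) may a priori have a simple pole there; your assertion that \( \Phi \) is holomorphic on all of \( \RS \) tacitly assumes \( \Psi(\z_n)=0 \), which is part of what must be proven. The same oversight occurs at the branch points: the bounds assumed on \( \Psi \) do not contain the shift \( m_n(e) \) present in \eqref{psin-endp}, so when \( \pi(\z_n)=e\in E \) the ratio is only \( \mathcal O\big(|z-e|^{-1/2}\big) \), i.e., an at most simple pole in the local coordinate \( \sqrt{z-e} \), not boundedness. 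The repair is exactly the paper's argument: \( \Phi \) is meromorphic on the compact surface \( \RS \) with at most one simple pole, and on a surface of genus one a non-constant meromorphic function has at least two poles counted with multiplicity, so \( \Phi \) is constant. You invoke precisely this fact in your existence step (``exactly one free point is forced''), but the converse as written neither uses it nor goes through without it.

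A second, smaller soft spot is the sentence ``the power factors are incorporated via explicit local models \( (z-e)^{\alpha(e)} \)''. A local model does not produce a global function on \( \RS\setminus\boldsymbol\Delta \) with jump \( \prod_{e\in E}(s-e)^{\alpha(e)} \); the paper instead feeds the full density \( \log h \), logarithmic singularities included, into the Cauchy-kernel integral \eqref{Sh} and derives the two-sided endpoint behavior \eqref{Sh-endp} from the local theory of such integrals, see \cite{Zver71}, \cite[Section~I.8.5]{Gakhov}, and \cite[Section~5.2]{ApY15}. This analysis is also where the third line of \eqref{psin-endp} comes from: at \( b,b^{-1} \) three arcs of \( \boldsymbol\Delta \) meet at a single ramification point, and the exponent flips sign according to whether \( z\to e \) inside \( D_0 \) or \( D_\infty \). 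Your ``two traces lie on opposite sheets'' heuristic is symmetric in these two approaches and therefore cannot by itself yield that asymmetry; this step requires the actual local computation.
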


As mentioned before, the functions \( \Psi_n \) can be explicitly expressed via various differential on \( \RS \) as well as Riemann's theta functions, see \eqref{Psin} further below.

In our asymptotic analysis we shall be interested only in the indices \( n \) for which points \( \z_n \) stay away from \( \infty^{(1)} \). As stated in the following proposition, there are infinitely many such indices.

\begin{proposition}
\label{prop:Ne}
Given \( \varepsilon>0 \), let \( D_\varepsilon(z) \) be a disk of radius \( \varepsilon \) in the spherical metric around \( z\in\overline\C \) and \( D_\varepsilon(\z) \) be the connected component of \( \pi^{-1}(D_\varepsilon(z)) \) containing \( \z \). Define
\begin{equation}
\label{Ne}
\N_\varepsilon :=\left\{n\in\N: \z_n\not\in D_\varepsilon\big(\infty^{(1)} \big)\right\}.
 \end{equation}
 Then for all \( \varepsilon \) small enough either \( n \) or \( n-1 \) belongs to \( \N_\varepsilon \).
\end{proposition}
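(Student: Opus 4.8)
The plan is to reduce the question to the motion of the points \( \z_n \) under the Abel--Jacobi map. Since \( \RS \) has genus one, fix a holomorphic differential and a base point and let \( u:\RS\to\C/\Lambda=:\mathrm{Jac}(\RS) \) be the associated Abel--Jacobi map, \( \Lambda \) being the period lattice. For genus one this map is a biholomorphism, so it is continuous together with its inverse. In particular, as \( \varepsilon\to 0 \) the image \( u\big(D_\varepsilon(\infty^{(1)})\big) \) shrinks to the single point \( u\big(\infty^{(1)}\big) \); thus there is \( \delta(\varepsilon)\to0 \) such that \( \z\in D_\varepsilon\big(\infty^{(1)}\big) \) forces \( u(\z) \) to lie within distance \( \delta(\varepsilon) \) of \( u\big(\infty^{(1)}\big) \) in the flat metric of the torus. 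This is what lets me translate the spherical geometry of the statement into linear geometry on the Jacobian.

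First I would read off the divisor of \( \Psi_n \) from Theorem~\ref{thm:NS}(ii): viewed as a section of the flat line bundle carrying the multiplicative monodromy prescribed by \eqref{Psin-jump}, \( \Psi_n \) has divisor
\[
n\cdot 0^{(1)} + \z_n - n\cdot\infty^{(1)} - \infty^{(0)},
\]
which has degree zero, as it must. The monodromy across \( \boldsymbol\Delta \) is the same jump by \( h \) for every \( n \), so the underlying bundle, and hence its image \( c_h\in\C/\Lambda \) in the Jacobian, is independent of \( n \). Applying Abel's theorem to this section gives
\[
u(\z_n) = v_0 - n\,\omega \pmod{\Lambda}, \qquad \omega := u\big(0^{(1)}\big) - u\big(\infty^{(1)}\big), \quad v_0 := c_h + u\big(\infty^{(0)}\big).
\]
The same relation can be extracted directly from the explicit theta representation \eqref{Psin}, in which \( \z_n \) arises as the zero of a theta factor whose argument is translated by a fixed period vector each time \( n \) increases by one. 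Identifying that period vector with \( \omega \) through the Riemann bilinear relations is the one genuinely computational point, and it is where I expect the bookkeeping to be most delicate.

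With \( u(\z_n) = v_0 - n\omega \) in hand the conclusion is immediate. Since \( u \) is injective and \( 0^{(1)}\neq\infty^{(1)} \), the slope \( \omega \) is a nonzero element of \( \C/\Lambda \), so \( d_0 := \dist_\Lambda(\omega,0)>0 \) is a fixed positive constant. Suppose, for some \( \varepsilon \), that neither \( n \) nor \( n-1 \) lies in \( \N_\varepsilon \); then both \( \z_n \) and \( \z_{n-1} \) lie in \( D_\varepsilon\big(\infty^{(1)}\big) \), so \( u(\z_n) \) and \( u(\z_{n-1}) \) are each within \( \delta(\varepsilon) \) of \( u\big(\infty^{(1)}\big) \). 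Because \( u(\z_{n-1})-u(\z_n)=\omega \pmod\Lambda \), the triangle inequality on the torus yields \( d_0\leq 2\delta(\varepsilon) \). Choosing \( \varepsilon \) small enough that \( 2\delta(\varepsilon)<d_0 \)—a choice independent of \( n \)—is then a contradiction, so at least one of \( n,n-1 \) belongs to \( \N_\varepsilon \). The degenerate configurations in which \( \z_n \) coincides with \( \infty^{(1)} \) or \( 0^{(1)} \) are covered by the very same computation; they merely dictate which of the two consecutive indices is the admissible one.
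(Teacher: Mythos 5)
Your proposal is correct and takes essentially the same route as the paper's own proof (Lemma~\ref{lem:JIP}): both rest on the single fact that the Abel--Jacobi images of consecutive zeros differ by the fixed nonzero Jacobian vector \( u\big(0^{(1)}\big)-u\big(\infty^{(1)}\big) \), which the paper reads off from the defining Jacobi inversion problem \eqref{jip} together with Riemann's bilinear relations, and which you recover by applying Abel's theorem to the divisor of \( \Psi_n \) (equivalent content, since \eqref{jip} is precisely that Abel-type condition). Your quantitative triangle-inequality endgame merely replaces the paper's continuity/subsequence argument; the mechanism is identical.
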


In accordance with our notation, let us put \( B^{(i)}:= \pi^{-1}(B)\cap \RS^{(i)} \) for any set \( B \). Now we are ready to define the analogues of \eqref{calQn-real} and \eqref{calRn-real} for non-real \( a \). Set
\begin{equation}
\label{calQn}
\Qn(z) := \left\{
\begin{array}{rl}
\Psi_{n|D_0^{(0)}}(\z), & z\in D_0, \medskip \\
\Psi_{n|D_\infty^{(1)}}(\z), & z\in D_\infty,
\end{array}  \right.
\end{equation}
which is a sectionally holomorphic function in \( \C\setminus F \) with a pole of order \( n \) at infinity, and put
\begin{equation}
\label{calRn}
\Rn(z) :=  \frac1{z^n}\left\{
\begin{array}{rl}
\Psi_{n|D_0^{(1)}}(\z), & z\in D_0, \medskip \\
-\Psi_{n|D_\infty^{(0)}}(\z), & z\in D_\infty,
\end{array}
\right.
\end{equation}
which is also a sectionally holomorphic function in \( \overline\C\setminus F \)  with a zero of multiplicity \( n-1 \) at infinity\footnote{Again, these orders might change depending on the location of \( \z_n \).}. Due to the specifics of the Riemann-Hilbert analysis, which is used to study the behavior of the Pad\'e approximants, we shall also need the following functions. Let \( \Upsilon_n \) be a rational function on \( \RS \) that is finite except for two simple poles at \( \infty^{(0)} \) and \( \z_n \), and has a simple zero at \( 0^{(1)} \) (such a function is unique up to a scalar factor). Set 
\[
\Psi_n^\star := \Psi_n \Upsilon_n
\]
and define \( \Qn^\star \) and \( \Rn^\star \) via \eqref{calQn} and \eqref{calRn}, respectively, with \( \Psi_n \) replaced by \( \Psi_n^\star \) and \( z^n \) replaced by \( z^{n+1} \) in \eqref{calRn}. These functions are holomorphic in \( \C\setminus F \), \( \Qn^\star \) has a pole of order \( n \) at infinity while \( \Rn^\star \) has a zero of multiplicity \( n-1 \) there.

\begin{theorem}
\label{thm:main2}
Denote by \( P_n/Q_n \) the two-point Pad\'e approximant of type \( (n,n+1) \)  to \eqref{f} with \( \rho\in \mathcal W_1 \cup \mathcal W_2 \) and let \( R_n \) be given by \eqref{Rn}. Further, let \( \Qn \) and \( \Rn \) be given by \eqref{calQn} and \eqref{calRn} for \( \Psi_n \) defined as in Theorem~\ref{thm:NS} with \( h \) given by \eqref{h}. Then for any \( \varepsilon> 0 \) and all \( n\in\N_\varepsilon \) large enough the polynomial \( Q_n \) has degree \( n \) and can be normalized to be monic. In this case, it holds that
\begin{equation}
\label{asymptotics2}
\left\{
\begin{array}{rll}
Q_n &=& \gamma_n\left[\big(1+\upsilon_{n1}\big)\Qn + \upsilon_{n2}\mathcal Q_{n-1}^\star\right], \medskip \\
wR_n & = & \gamma_n\left[\big(1+\upsilon_{n1}\big)\Rn + \upsilon_{n2}\mathcal R_{n-1}^\star\right],
\end{array}
\right.
\end{equation}
locally uniformly in \( \overline\C\setminus F \), where \( \gamma_n \) is a constant such that \( \lim_{z\to\infty}\gamma_n\Qn(z)z^{-n} = 1 \) and the functions \( \upsilon=\upsilon_{nj} \) satisfy  \eqref{error-rate}.
\end{theorem}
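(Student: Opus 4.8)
The plan is to run a Deift--Zhou nonlinear steepest descent analysis of the matrix Riemann--Hilbert problem that characterizes the denominators \( Q_n \) and the remainders \( R_n \). First I would encode the interpolation conditions \eqref{Pade} of type \( (n,n+1) \) as a \( 2\times2 \) problem \rhy{} for a matrix \( \boldsymbol Y=\boldsymbol Y_n \), holomorphic in \( \overline\C\setminus F \), with jump \( \boldsymbol Y_+ = \boldsymbol Y_-\left(\begin{smallmatrix} 1 & \rho \\ 0 & 1 \end{smallmatrix}\right) \) across \( F \) and with normalizations at \( \infty \) and at \( 0 \) dictated by the degrees and the interpolation orders; its first row recovers \( Q_n \) and the expression \( z^{-n}(Q_nf_\rho-P_n) = R_n \) from \eqref{Rn}, while the non-Hermitian orthogonality relations for \( Q_n \) come directly out of \eqref{Pade}. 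Solvability of the final small-norm problem for large \( n\in\N_\varepsilon \) will, as usual, give back the existence of \( \boldsymbol Y \) and hence that \( \deg Q_n=n \) with the stated monic normalization.

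Next I would renormalize using the functions \( \Psi_n \) supplied by Theorem~\ref{thm:NS}. These play the role of exponentiated \( g \)-functions, except that, since \( F \) separates the plane and the relevant potential theory lives on the genus-one surface \( \RS \), the normalizer is the sectionally meromorphic object \( \Psi_n \) whose jump \eqref{Psin-jump} and whose pole/zero pattern at \( 0^{(1)},\infty^{(0)},\infty^{(1)} \) are tuned exactly to the interpolation data. Conjugating \( \boldsymbol Y \) by a diagonal matrix \( \diag \) assembled from the two sheet-values of \( \Psi_n \) turns the growing and decaying behavior at \( 0 \) and \( \infty \) into bounded behavior and converts the jump on \( F^\circ \) into the Szeg\H o-type multiplier \( h \) from \eqref{h}. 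I would then open lenses: factor the jump on each band and deform onto lens contours on which, by the analogue on \( \RS \) of the inequalities behind \eqref{equilibrium}, the jumps are uniformly exponentially close to the identity away from the branch points. The sum conditions \eqref{sum-cond} are precisely what make the three jump factors meeting at \( b \) and \( b^{-1} \) mutually compatible, so that the lenses open consistently through the triple junctions.

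I would then build an explicit parametrix. The global parametrix \( \boldsymbol N \) is a \( 2\times2 \) matrix assembled from \( \Psi_n \) and a second independent solution on \( \RS \); reading its entries off via \eqref{calQn}--\eqref{calRn} identifies one column with \( (\Qn,\Rn) \) and the other with the index-\( (n-1) \) starred functions \( (\mathcal Q_{n-1}^\star,\mathcal R_{n-1}^\star) \), which is the structural origin of the two terms in \eqref{asymptotics2}. Local parametrices are needed at the four branch points: at \( a,a^{-1} \) one uses the Bessel model of order governed by \( \alpha,\beta \) in \eqref{W1-comp} (for \( \rho\in\mathcal W_2 \) there is no branch singularity there and this step is vacuous), and at the Chebotar\"ev points \( b,b^{-1} \), where three analytic arcs meet at a branch point of \( \RS \), one needs a special model adapted to this triple junction. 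Matching the local solutions to \( \boldsymbol N \) on the boundaries of small fixed disks produces a global approximant \( \boldsymbol P \), with mismatch of size \( \mathcal O(1/n) \) for \( \rho\in\mathcal W_1 \) (finite smoothness of \( h \) together with genuine Bessel edges) and of geometric size for \( \rho\in\mathcal W_2 \) (analytic \( h \), no hard edges), which is exactly the dichotomy in \eqref{error-rate}. Forming \( \boldsymbol Z=\boldsymbol X\boldsymbol P^{-1} \) yields a problem with jumps uniformly close to \( I \), small-norm theory gives \( \boldsymbol Z=I+\mathcal E_n \) with \( \|\mathcal E_n\| \) controlled by the mismatch, and unwinding all transformations while reading off the first row writes \( Q_n \) and \( wR_n \) as the two parametrix columns corrected by the entries of \( \mathcal E_n \); those entries are the \( \upsilon_{n1},\upsilon_{n2} \) of \eqref{asymptotics2} and obey \eqref{error-rate}.

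The main obstacle, and the reason for restricting to \( n\in\N_\varepsilon \), is the uniform invertibility of \( \boldsymbol N \) as the free zero \( \z_n \) moves on \( \RS \). Because the entries of \( \boldsymbol N \) involve Riemann theta functions on the genus-one surface, \( \det\boldsymbol N \) can approach a zero of the theta divisor, and this degeneration occurs precisely as \( \z_n\to\infty^{(1)} \); Proposition~\ref{prop:Ne} guarantees that either \( n \) or \( n-1 \) keeps \( \z_n \) in a fixed compact part of \( \RS \), so that along \( \N_\varepsilon \) the determinant stays bounded away from zero and the parametrix, together with the small-norm estimate, remains uniformly controlled. The secondary difficulty is the construction and matching of the local parametrix at \( b,b^{-1} \), which must simultaneously resolve the square-root branching of \( \RS \) and the three-arc geometry; once that model solution is in hand, its matching error is of the claimed order and the remainder of the argument is routine.
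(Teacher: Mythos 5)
Your architecture is the same as the paper's: the \rhy{} characterization built from the \( (n,n+1) \) and \( (n,n-1) \) pairs, lens opening with \eqref{sum-cond} securing consistency at the triple points, a global parametrix whose two rows are \( (\Qn,\Rn/w) \) and \( (\mathcal Q_{n-1}^\star,\mathcal R_{n-1}^\star/w) \), Bessel parametrices at \( a^{\pm1} \), local analysis at \( b^{\pm1} \), small-norm theory, and the restriction to \( \N_\varepsilon \), with the correct \( 1/n \) versus geometric dichotomy in \eqref{error-rate}. However, the two points you defer are precisely where the substance of the proof lies, and both remain genuine gaps. First, the local model at \( b,b^{-1} \) is not a new ``special model adapted to the triple junction'': it is the standard Airy parametrix of \cite{DKMLVZ99b}. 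Since the Nuttall function \( v \) vanishes like a square root at \( b^{\pm1} \), the map \( \zeta_{b^{-1}}(z)=\big(-\tfrac34\int_{b^{-1}}^z v(s)s^{-1}\dd s\big)^{2/3} \) is conformal and carries \( F_{a^{-1}} \) and the three auxiliary arcs \( \Gamma_{b^{-1}},\Gamma_{b^{-1},\pm1} \) onto the four rays of the Airy contour, while the remaining jumps across \( F_{\pm1} \) are absorbed by a piecewise constant matrix \( \boldsymbol J \). What is genuinely delicate --- and absent from your outline --- is the matching condition \rhp(d): the outer model is expressed through \( \boldsymbol D=\Phi^{n\sigma_3} \), and \( \Phi \) carries the \( n \)-dependent phases \( e^{2\pi\ic n\omega} \), \( e^{2\pi\ic n\tau} \) across \( \boldsymbol\alpha,\boldsymbol\beta \); reconciling \( \exp\{-\tfrac23 n\zeta_{b^{-1}}^{3/2}\sigma_3\} \) with \( \boldsymbol D \) forces the constant matrices \( \boldsymbol J,\boldsymbol K \) of \eqref{E-zeta-b}, without which the prefactor \( \boldsymbol E_{b^{-1}} \) is not single-valued in \( U_{b^{-1}} \) and no matching is possible. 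Declaring the remainder routine ``once that model solution is in hand'' inverts the difficulty: the model is classical, the matching is the work.

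Second, your degeneration mechanism is misattributed and conceals a lemma you would need to prove. In the paper \( \det\boldsymbol N\equiv1 \); what can degenerate is \( \det\boldsymbol M=(\gamma_n\gamma_{n-1}^\star)^{-1} \), i.e., the normalizing constants can blow up. Their boundedness along \( \N_\varepsilon \) does not follow from Proposition~\ref{prop:Ne} alone: the second row of the parametrix is built from \( \Psi_{n-1}^\star=\Psi_{n-1}\Upsilon_{n-1} \), and \( \Upsilon_{n-1} \) carries a second zero \( \z_{n-1}^\star \), the solution of an auxiliary Jacobi inversion problem, which could a priori approach \( \infty^{(0)} \) and blow up \( \gamma_n\gamma_{n-1}^\star \) even while \( \z_n \) stays in a fixed compact set. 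The implication that \( \z_n \) staying away from \( \infty^{(1)} \) forces \( \z_{n-1}^\star \) to stay away from \( \infty^{(0)} \) is exactly the second assertion of Lemma~\ref{lem:JIP}, proved by comparing the two inversion problems; without it (or an equivalent statement) your uniform control of the conjugated jumps \( \boldsymbol{MD}^{-1}\mathcal O(1/n)\boldsymbol{DM}^{-1} \), and hence the small-norm step that also delivers \( \deg(Q_n)=n \), is unjustified.
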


Similarly to \eqref{error} we have that
\[
f_\rho-\frac{P_n}{Q_n} = \frac{z^n\Rn}{w\Qn} \frac{1+\upsilon_{n,1}+\upsilon_{n,2}(\mathcal R_{n-1}^\star/\Rn)}{1+\upsilon_{n,1}+\upsilon_{n,2}(\mathcal Q_{n-1}^\star/\Qn)}.
\]
Due to the presence of a floating zero \( \z_n \), the above formula does not immediately imply the locally uniform convergence of the approximants to \( f_\rho \). Indeed, when 
\[
\z_n\in D_{\mathcal R} := D_0^{(1)} \cup D_\infty^{(0)},
\]
it holds that \( \Rn(z_n)=0 \), which yields that the approximant has an additional interpolation point near \( z_n \). However, when
\[
\z_n\in D_{\mathcal Q} := D_0^{(0)} \cup D_\infty^{(1)},
\]
it holds that \( \Qn(z_n)=0 \) and therefore the approximant has  pole in a vicinity of  \( z_n \). The following results help us further elucidate the situation.

\begin{theorem}
\label{thm:NS2}
The functions \( \Psi_n \) can be normalized so that for any closed set \( B \subset \overline\C\setminus F \) and any \( \delta>0 \) there exist positive constants \( C(B) \) and  \( C_\delta(B) \) such that
\begin{equation}
\label{calQn-bound}
|\Qn(z)|e^{-ng(z)}\left\{
\begin{array}{ll}
\leq C(B), & z\in B, \medskip \\
\geq C_\delta(B), & z\in B\setminus \pi\big(D_{\mathcal Q}\cap D_\delta(\z_n)\big),
\end{array}
\right.
\end{equation}
where \( g(z) \) is a continuous function in \( \C \) that is harmonic in \( \C\setminus F \) and satisfies 
\begin{equation}
\label{fun-g}
g(z) = \log|z| + \mathcal O(1), \quad  z\to\infty, \quad \text{and} \quad 2g(s) = \log|s|, \quad s\in F
\end{equation}
(it follows from the minimum principle for superharmonic functions that \( 2g(z)-\log|z|>0 \) in \( \C\setminus F \)). Moreover, the constants \( C(B) \) and  \( C_\delta(B) \) can be adjusted so that
\begin{equation}
\label{calRn-bound}
|\Rn(z)|e^{(n-1)g(z)}\left\{
\begin{array}{ll}
\leq C(B), & z\in B, \medskip \\
\geq C_\delta(B), & z\in B\setminus \pi\big(D_{\mathcal R}\cap D_\delta(\z_n)\big).
\end{array}
\right.
\end{equation}
The functions \( \Upsilon_n \) can be normalized so that inequalities \eqref{calQn-bound} and \eqref{calRn-bound} hold for \( \Qn^\star \) and \( \Rn^\star \) as well. Moreover, for any \( \delta>0 \) there exists a constant \( C_\delta \) such that
\begin{equation}
\label{compare}
C_\delta \geq \left\{
\begin{array}{ll}
|\mathcal Q_{n-1}^\star/\Qn| & \text{in} \quad \overline\C\setminus \pi\big(\overline D_{\mathcal Q}\cap D_\delta(\z_n) \big), \medskip \\
|\mathcal R_{n-1}^\star/\Rn| & \text{in} \quad \overline\C\setminus \left(D_\delta(\infty) \cup \pi\big(\overline D_{\mathcal R}\cap D_\delta(\z_n) \big)\right).
\end{array} 
\right.
\end{equation}
\end{theorem}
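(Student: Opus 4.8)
The proof rests on the explicit representation \eqref{Psin} of $\Psi_n$, which exhibits it as a product of three factors of quite different character: an exponential $\exp\{n\Phi\}$, where $\Phi$ is an abelian integral of the third kind on $\RS$ with simple poles at $0^{(1)}$ and $\infty^{(1)}$ and purely imaginary periods; a ratio of Riemann theta functions encoding the divisor of the floating simple zero $\z_n$ and of the simple pole at $\infty^{(0)}$; and a Szeg\H{o}-type function solving the scalar jump $\Psi_{n-}=\Psi_{n+}h$ on $\boldsymbol\Delta$ that does not depend on $n$. The plan is to show that the first factor contributes precisely the weight $e^{ng}$ after passage to the base, while the remaining two factors are bounded above and below by constants independent of $n$, save for the single zero of the theta quotient which tracks $\z_n$.

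First I would identify the function $g$ of \eqref{fun-g} with the real part of $\Phi$ read on the two sheets. Writing $\z^\ast$ for the hyperelliptic involution and normalizing the additive constant, the symmetry $\Phi(\z)+\Phi(\z^\ast)=\log z$ forces $\re\Phi(\z)+\re\Phi(\z^\ast)=\log|z|$; since $\Phi$ is regular on $\RS^{(0)}$, matching the logarithmic singularity of $\re\Phi$ at $\infty^{(1)}$ against the order-$n$ pole and invoking the $S$-property of $F$ (the equality of normal derivatives in Buslaev's theorem, equivalent here to $\re\Phi(s^{(0)})=\re\Phi(s^{(1)})$ on $F_1\cup F_{-1}$) yields $g(z)=\re\Phi(z^{(0)})$ in $D_0$, $g(z)=\re\Phi(z^{(1)})$ in $D_\infty$, and $2g(s)=\log|s|$ on $F$. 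Inserting the definitions \eqref{calQn} and \eqref{calRn} and using $\re\Phi(z^{(0)})+\re\Phi(z^{(1)})=\log|z|$ then gives $\frac1n\log|\Qn|\to g$ and $\frac1n\log|\Rn|\to-g$ locally uniformly, which is the required exponential scaling. The Szeg\H{o} factor is bounded away from $0$ and $\infty$ on compact subsets of $\overline\C\setminus F$ by the H\"older regularity of $h\prod_{e}(s-e)^{-\alpha(e)}$ assumed in Theorem~\ref{thm:NS}, so it affects only the constants $C(B)$ and $C_\delta(B)$.

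The heart of the matter, and the step I expect to be the main obstacle, is the uniform control of the theta quotient. Its numerator and denominator are values of an entire theta function at arguments of the form $A(\z)-A(\z_n)+\mathrm{const}$ and $A(\z)+\mathrm{const}$, where $A$ is the Abel map; as $n$ varies these arguments move quasi-periodically on the Jacobian, so one must rule out accidental degeneration of the denominator and of the overall normalization. This is exactly where Proposition~\ref{prop:Ne} enters: restricting to $n\in\N_\varepsilon$ keeps $\z_n$ bounded away from $\infty^{(1)}$, the representation \eqref{Psin} stays nondegenerate, and the multiplicative constant in $\Psi_n$ (free by the uniqueness in Theorem~\ref{thm:NS}) can be fixed so that the theta quotient is bounded above on every compact $B$ by a constant $C(B)$. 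The quotient vanishes only at its unique zero, which projects to $\pi(\z_n)$ and sits on the sheet pairing $D_{\mathcal Q}$ defining $\Qn$ (respectively $D_{\mathcal R}$ defining $\Rn$); excising $\pi\big(D_{\mathcal Q}\cap D_\delta(\z_n)\big)$ (respectively $\pi\big(D_{\mathcal R}\cap D_\delta(\z_n)\big)$) therefore restores the lower bound $C_\delta(B)$, and when $\z_n$ lies on the opposite pairing the excised set is empty. Combining these estimates with the exponential scaling of the previous paragraph yields \eqref{calQn-bound} and \eqref{calRn-bound}.

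For the starred functions I would use that multiplication by $\Upsilon_n$ cancels the simple zero of $\Psi_n$ at $\z_n$ against the simple pole of $\Upsilon_n$ there, while merely shifting the data at $0^{(1)}$ and $\infty^{(0)}$ by one order; hence $\Psi_n^\star$ carries no floating zero, its theta quotient has no $n$-dependent zero, and after normalizing $\Upsilon_n$ the same argument delivers \emph{both} inequalities in \eqref{calQn-bound} and \eqref{calRn-bound} for $\Qn^\star$ and $\Rn^\star$ with no excised set. Finally, the comparison \eqref{compare} follows by dividing the upper bound for the index-$(n-1)$ starred function by the lower bound for the index-$n$ unstarred one: since $|\mathcal Q_{n-1}^\star|\le C e^{(n-1)g}$ and $|\Qn|\ge C_\delta e^{ng}$ off $\pi\big(\overline D_{\mathcal Q}\cap D_\delta(\z_n)\big)$, the ratio is $\mathcal O(e^{-g})$, which is bounded on all of $\overline\C$ because $g\to+\infty$ at infinity; the analogous computation for $\mathcal R_{n-1}^\star/\Rn$ gives $\mathcal O(e^{g})$, an extra factor of $z$ at infinity on account of the reversed (vanishing) orders there, which is why the additional neighborhood $D_\delta(\infty)$ must be excised.
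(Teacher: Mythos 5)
Your high-level plan --- factor \( \Psi_n \) via \eqref{Psin}, let \( \Phi^n \) produce the weight \( e^{\pm ng} \) on the two sheet-pairings \( D_{\mathcal Q} \), \( D_{\mathcal R} \), and control the remaining factors uniformly in \( n \) by compactness --- is indeed the paper's, and your identification of \( g \) with \( \log|\Phi| \) on \( D_{\mathcal Q} \) is correct. However, your treatment of the starred functions contains a genuine error. A rational function on the genus-\(1\) surface \( \RS \) with simple poles at \( \infty^{(0)} \) and \( \z_n \) and a prescribed simple zero at \( 0^{(1)} \) cannot have \emph{only} that zero: its divisor must have degree zero and be trivial under the Abel map, so \( \Upsilon_n \) necessarily has a second zero \( \z_n^\star \) determined by \( \am\big(\z_n^\star+0^{(1)}-\z_n-\infty^{(0)}\big)\in\Z+\mathbf B\Z \); this is precisely the point introduced in Lemma~\ref{lem:JIP}, and it is generically distinct from \( 0^{(1)} \). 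Consequently \( \Psi_n^\star=\Psi_n\Upsilon_n \) \emph{does} carry a floating zero (at \( \z_n^\star \)), as the paper's representation \eqref{Psin-star} with its factor \( \Theta\big(\cdot;\z_n^\star\big) \) makes explicit; your claim that the starred bounds hold ``with no excised set'' is false, since \( \Qn^\star \) (resp.\ \( \Rn^\star \)) vanishes at \( \pi(\z_n^\star) \) whenever \( \z_n^\star \) lies in \( D_{\mathcal Q} \) (resp.\ \( D_{\mathcal R} \)).

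Two further steps would fail as written. For \eqref{compare} you divide the upper bound for \( \mathcal Q_{n-1}^\star \) by the lower bound for \( \Qn \); but those bounds are available only on closed sets \( B\subset\overline\C\setminus F \), with constants depending on \( B \), so this argument gives nothing uniform up to \( F \), whereas \eqref{compare} is asserted on \( \overline\C \) minus a small excised set --- in particular on and across \( F \). The paper instead observes that \( \Psi_{n-1}^\star/\Psi_n \) is a single meromorphic function on \emph{all} of \( \RS \) (numerator and denominator satisfy the same jump \eqref{Psin-jump}, so the \( h \)'s cancel), factors it as \( A_{m_{n-1}^\star-m_n-\tau}\cdot\big[\Theta\big(\cdot;0^{(1)}\big)/\Phi\big]\cdot\big[\Theta\big(\cdot;\z_{n-1}^\star\big)/\Theta_n\big] \), and bounds each factor globally by compactness; some such globalization is unavoidable to reach the stated claim. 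Finally, you drop the factor \( A_{n\tau+m_n} \) from \eqref{Psin} altogether: there is no \( n \)-independent ``Szeg\H{o}-type'' factor doing its job, because \( \Phi^n \) has the \( n \)-dependent multiplicative jumps \eqref{Phi-jumps} across \( \boldsymbol\alpha \) and \( \boldsymbol\beta \) that must be compensated, and the uniform control of \( A_{n\tau+m_n} \) rests on the boundedness of the sequence \( \{n\tau+m_n\} \), which follows from \eqref{jip}, the boundedness of the range of \( \am \), and \( \im(\mathbf B)>0 \). Relatedly, Proposition~\ref{prop:Ne} is not what makes the theta quotient controllable: the denominator of \( \Theta_n \) in \eqref{Thetan} is \( n \)-independent, and compactness of the family \( \{\Theta(\cdot;\p)\}_{\p\in\RS} \) gives the two-sided bounds for \emph{all} \( n \), so restricting to \( \N_\varepsilon \) is both unnecessary here and would weaken the theorem to a subsequence.
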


In view of Buslaev's theorem, it should be clear that \( 2g(z)-\log|z|=g_F(z,0)+g_F(z,\infty) \). 

\begin{figure}[ht!]
\centering
\subfigure[]{\includegraphics[scale=.3]{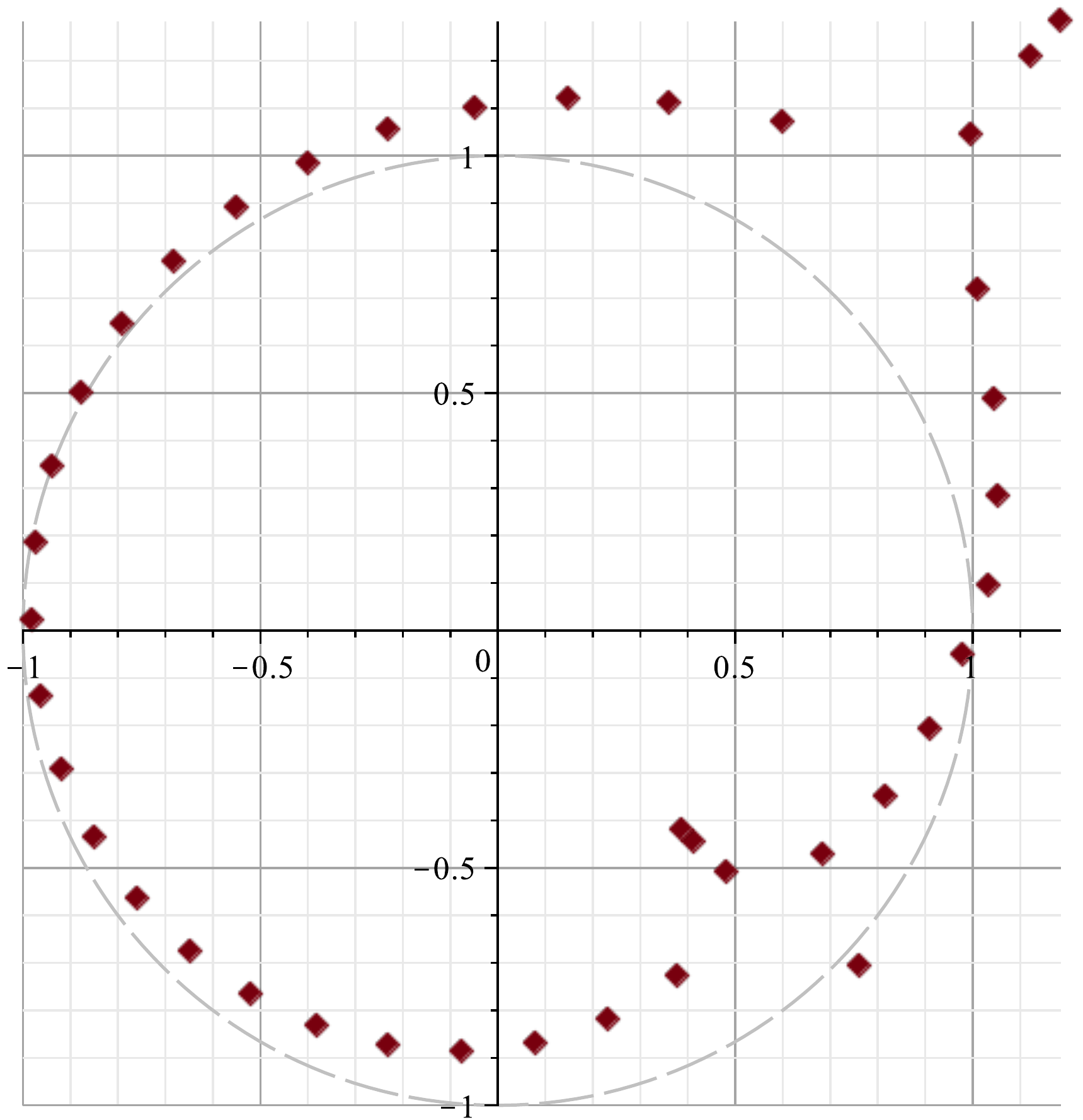}}
\quad\quad
\subfigure[]{\includegraphics[scale=.3]{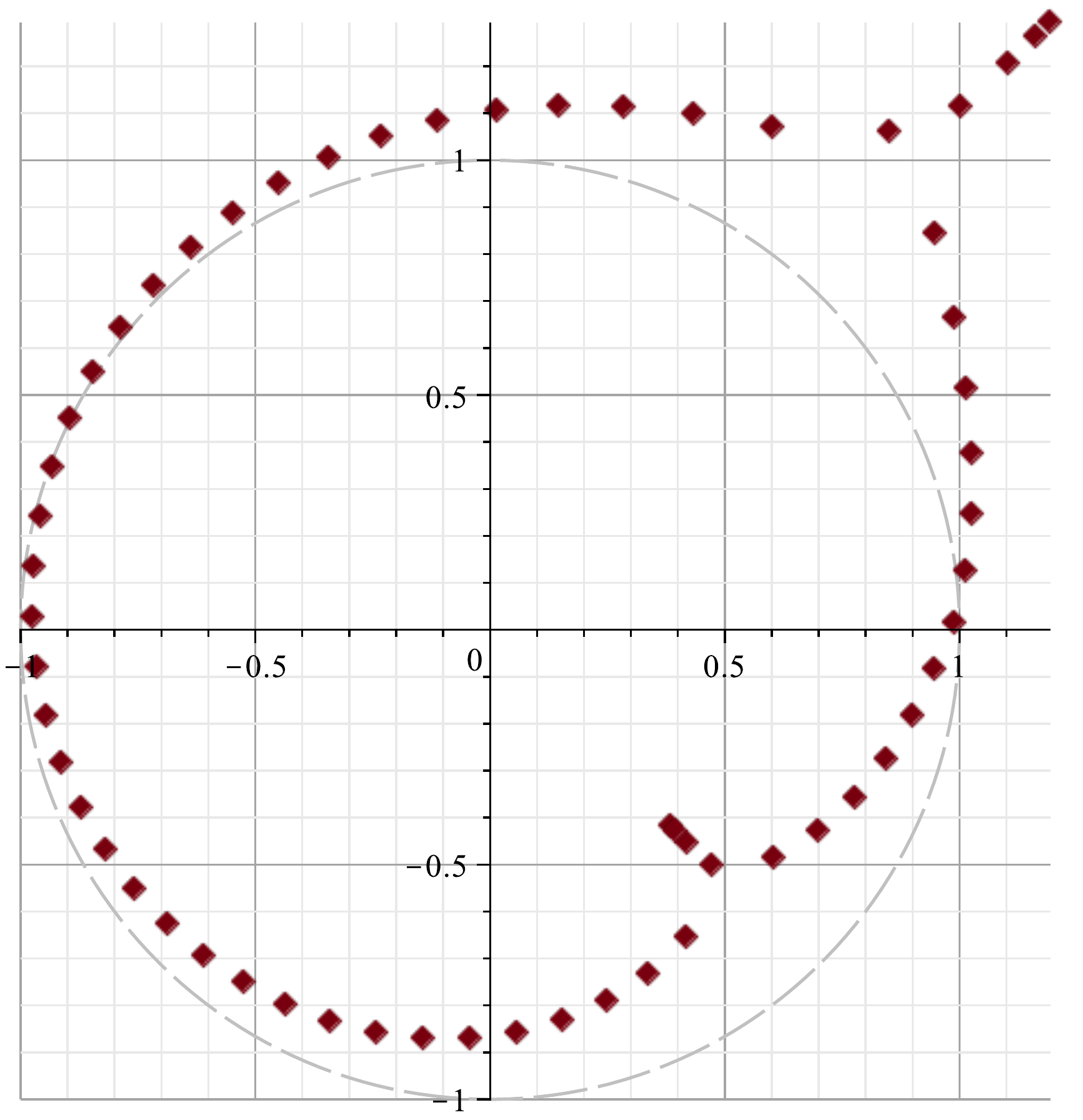}}
\caption{\small Zeros of the denominator polynomial (a) \( Q_{40} \) and (b) \( Q_{60} \) when the approximated pair is given by \( f_0(z) = \log\big(\frac{z-1}{z-1/a}\big) \) and \( f_\infty(z) = \log\big(\frac{z-a}{z-1}\big) \) for \( a = 1.2+1.3\ic \). One can clearly see one zero of \( Q_{40} \) not being aligned along Buslaev's compact \( F \).}
\label{N-complex}
\end{figure}

The author would like to thank Andrei Mart\'{\i}nez Finkelshtein for many valuable discussions.

\section{Proof of Theorems~\ref{thm:NS},~\ref{thm:NS2} and Proposition~\ref{prop:Ne}}
\label{sec:NS}

Let \( \RS \) be the Riemann surface defined in \eqref{RS}. We consider each \( \RS^{(i)} \) to be closed subsets of \( \RS \), i.e., it does contain cycles \( \boldsymbol\beta,\boldsymbol\delta \).  We define the conformal involution on \( \RS \) by \( \z=(z,w) \mapsto \z^* =(z,-w) \). It is easy to see that the pair \( (\boldsymbol\alpha,\boldsymbol\beta) \) forms a homology basis on \( \RS \). In particular, \( \RS_{\boldsymbol\alpha,\boldsymbol\beta}:= \RS\setminus(\boldsymbol\alpha\cup\boldsymbol\beta) \) is simply connected.

\subsection{Nuttall's Differential}
\label{ssec:ND}

Let \( w(\z) := (-1)^iw(z) \), \( \z\in \RS^{(i)} \), where \( w(z) \) is the branch defined in \eqref{w}. For convenience, set
\[
v(\z) := \frac{(z-b)(z-b^{-1})}{w(\z)}
\]
and \( v(z):= v(\z) \) for \( \z\in\RS^{(0)}\setminus\{\boldsymbol\beta\cup\boldsymbol\gamma\cup\boldsymbol\delta\} \). Notice that \( v(0)=v(\infty)=1 \). The differential
\begin{equation}
\label{Nuttall}
\mathcal N(\s) = \frac{1-v(\s)}{2s}\dd s
\end{equation}
is holomorphic except for two simple poles at \( 0^{(1)} \) and \( \infty^{(1)} \) with respective residues \( 1 \) and \( -1 \). Moreover, one can readily check using \eqref{diff} that all the periods of \( \mathcal N(\s) \) are purely imaginary and therefore we can define
\begin{equation}
\label{periods}
\omega := -\frac1{2\pi\ic}\oint_{\boldsymbol\beta}\mathcal N \quad \text{and} \quad \tau:=\frac1{2\pi\ic}\oint_{\boldsymbol\alpha}\mathcal N,
\end{equation} 
which are clearly real constants.

\begin{lemma}
\label{lem:Phi}
Let $\sqrt a$ be the principal value of the square root. Define
\begin{equation}
\label{Phi}
\Phi(\z) := \sqrt a \exp\left\{\int_{\boldsymbol a}^{\z}\mathcal N\right\}, \quad \z\in\RS_{\boldsymbol\alpha,\boldsymbol\beta},
\end{equation}
where the path of integration belongs entirely to \( \RS_{\boldsymbol\alpha,\boldsymbol\beta} \).  The function \( \Phi(\z) \) is holomorphic and non-vanishing in \( \RS_{\boldsymbol\alpha,\boldsymbol\beta}\setminus\big\{\infty^{(1)},0^{(1)}\big\} \) with a simple pole at \( \infty^{(1)} \) and a simple zero at \( 0^{(1)}\in \RS^{(1)} \). It holds that \( \Phi(\z)\Phi(\z^*)=z \) and the traces of \( \Phi(\z) \) satisfy
\begin{equation}
\label{Phi-jumps}
\Phi_+ = \Phi_- \left\{
\begin{array}{lll}
\exp\left\{2\pi\ic\tau\right\} & \text{on} & \boldsymbol\beta, \medskip \\
\exp\left\{2\pi\ic\omega\right\} & \text{on} & \boldsymbol\alpha.
\end{array}
\right.
\end{equation}
Moreover, $|\Phi(\z)|^2<|z|$ for $z\in D_{\mathcal R} \) and $|\Phi(\z)|^2>|z|$ for $z\in D_{\mathcal Q} \).
\end{lemma}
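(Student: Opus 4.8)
The plan is to extract everything from the local and global structure of Nuttall's differential \( \mathcal N \). Since \( \RS_{\boldsymbol\alpha,\boldsymbol\beta} \) is simply connected and \( \mathcal N \) is holomorphic there apart from the two simple poles, the integral in \eqref{Phi} is path-independent, and \( \Phi \) is a single-valued holomorphic non-vanishing function away from \( 0^{(1)},\infty^{(1)} \). At these two points I would read off the local behaviour from the residues: as \( z \) (resp. \( 1/z \)) is a local coordinate at \( 0^{(1)} \) (resp. \( \infty^{(1)} \)) and the residues are \( +1 \) and \( -1 \), the integral grows like \( \log z \) in both cases, so \( \exp\{\int\mathcal N\} \) has a simple zero at \( 0^{(1)} \) and a simple pole at \( \infty^{(1)} \). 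For the symmetry \( \Phi(\z)\Phi(\z^*)=z \) I would use \( w(\z^*)=-w(\z) \), hence \( v(\z^*)=-v(\z) \) and \( \mathcal N(\z)+\mathcal N(\z^*)=\frac{1-v}{2z}\dd z+\frac{1+v}{2z}\dd z=\dd z/z \). As \( \boldsymbol\alpha,\boldsymbol\beta \) are full fibres they are invariant under \( \z\mapsto\z^* \), so \( \Phi(\z^*) \) is again single-valued on \( \RS_{\boldsymbol\alpha,\boldsymbol\beta} \); differentiating \( \log[\Phi(\z)\Phi(\z^*)] \) gives \( \mathcal N(\z)+\mathcal N(\z^*)=\dd z/z=\dd\log z \), so \( \Phi(\z)\Phi(\z^*)/z \) is constant, and evaluating at \( \boldsymbol a \) (fixed by the involution, with \( \Phi(\boldsymbol a)^2=a \)) pins the constant to \( 1 \).

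The jump relations \eqref{Phi-jumps} I would obtain from the standard monodromy of an abelian integral: crossing the cut \( \boldsymbol\beta \) (resp. \( \boldsymbol\alpha \)) alters the path by a loop homologous to \( \boldsymbol\alpha \) (resp. \( -\boldsymbol\beta \)), because \( \boldsymbol\alpha\cdot\boldsymbol\beta=1 \), whence \( \Phi_+/\Phi_-=\exp\oint_{\boldsymbol\alpha}\mathcal N=e^{2\pi\ic\tau} \) on \( \boldsymbol\beta \) and \( \Phi_+/\Phi_-=\exp(-\oint_{\boldsymbol\beta}\mathcal N)=e^{2\pi\ic\omega} \) on \( \boldsymbol\alpha \). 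The crucial consequence, using that all periods of \( \mathcal N \) are purely imaginary so that \( \tau,\omega\in\R \), is that these multipliers are unimodular; hence \( |\Phi_+|=|\Phi_-| \) on \( \boldsymbol\alpha\cup\boldsymbol\beta \), and \( \log|\Phi| \) is a single-valued harmonic function on \( \RS\setminus\{0^{(1)},\infty^{(1)}\} \).

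Finally, for the inequalities I would study \( u(\z):=2\log|\Phi(\z)|-\log|z| \). From \( \Phi(\z)\Phi(\z^*)=z \) one obtains \( u(\z)=\re\int_{\boldsymbol a}^{\z}\eta \) with \( \eta:=\mathcal N(\z)-\mathcal N(\z^*)=-\frac{(z-b)(z-b^{-1})}{z\,w(\z)}\dd z \); by the preceding paragraph \( u \) is single-valued and harmonic on \( \RS \) minus the four points over \( \{0,\infty\} \), with \( u\to-\infty \) at \( 0^{(1)},\infty^{(0)} \) and \( u\to+\infty \) at \( 0^{(0)},\infty^{(1)} \). The heart of the matter is that \( u\equiv 0 \) on \( \boldsymbol\Delta \): since \( \eta^2 \) equals the negative of the quadratic differential \eqref{diff} whose critical trajectories constitute \( F^\circ \), the differential \( \eta \) takes purely imaginary values along \( \boldsymbol\Delta=\pi^{-1}(F) \), so \( \re\int\eta \) is constant on the connected set \( \boldsymbol\Delta \) and equals its value \( 0 \) at \( \boldsymbol a \). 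The four components of \( \RS\setminus\boldsymbol\Delta \) are \( D_0^{(0)},D_0^{(1)},D_\infty^{(0)},D_\infty^{(1)} \), each carrying a single logarithmic singularity of \( u \); applying the maximum (resp. minimum) principle on the two components making up \( D_{\mathcal R} \) (resp. \( D_{\mathcal Q} \)), where \( u=0 \) on the boundary and \( u\to\mp\infty \) inside, yields \( u<0 \) on \( D_{\mathcal R} \) and \( u>0 \) on \( D_{\mathcal Q} \), which is precisely \( |\Phi(\z)|^2<|z| \) and \( |\Phi(\z)|^2>|z| \).

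The main obstacle is the identity \( u\equiv 0 \) on \( \boldsymbol\Delta \): it is exactly here that the \( S \)-property of \( F \) (equivalently, the fact that \( F^\circ \) consists of critical trajectories of \eqref{diff}) is indispensable, and one must verify the matching of orientations, the harmless nature of the apparent singularities of \( \eta \) at the branch points \( a,a^{-1},b,b^{-1} \), and the sign conventions in the monodromy computation underlying \eqref{Phi-jumps}.
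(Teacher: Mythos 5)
Your proposal is correct and follows essentially the same route as the paper's proof: holomorphy and the zero/pole structure are read off from the residues of \( \mathcal N \), the identity \( \Phi(\z)\Phi(\z^*)=z \) comes from \( v(\z^*)=-v(\z) \) with normalization at \( \boldsymbol a \), the jumps \eqref{Phi-jumps} come from the periods \eqref{periods}, and the inequalities follow from \( |\Phi(\s)|^2=|s| \) on \( \boldsymbol\Delta \) (pure imaginarity of \( v(t)\,\dd t/t \) along the critical trajectories of \eqref{diff}) combined with the maximum principle. Your final paragraph simply makes explicit the maximum-principle step (the four components of \( \RS\setminus\boldsymbol\Delta \) and the signs of the logarithmic singularities of \( 2\log|\Phi|-\log|z| \)) that the paper leaves implicit.
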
 

\begin{proof}
The holomorphy properties of \( \Phi \) follow immediately from the corresponding properties of \( \mathcal N \). Since \( v(\z) = -v(z) \)  for \( \z\in\RS^{(1)}\setminus\{\boldsymbol\beta\cup\boldsymbol\gamma\} \), it holds that
\begin{equation}
\label{symmetry}
\Phi(\z)\Phi(\z^*) = a\exp\left\{\int_a^z\frac{(1-v(t))\dd t}{2t}+\int_a^z\frac{(1+v(t))\dd t}{2t}\right\} = a\exp\left\{\int_a^z\frac{\dd t}{t}\right\} = z.
\end{equation}
Furthermore, we get on \( \boldsymbol \alpha \) and \( \boldsymbol\beta \) that
\[
\Phi_+ = \Phi_-\exp\left\{-\oint_{\boldsymbol\beta}\mathcal N\right\} \quad \text{and} \quad \Phi_+ = \Phi_-\exp\left\{\oint_{\boldsymbol\alpha}\mathcal N\right\},
\]
respectively, which yields \eqref{Phi-jumps}, see \eqref{periods}. Finally, recall that $F$ consists of the critical trajectories of the quadratic differential $-(v(z)\mathrm dz/z)^2$, see \eqref{diff}. Hence, the integral of $v(t)\dd t/t$ on any subarc of \( F \) is purely imaginary and therefore
\[
\big|\Phi^2(\s)\big| = |s|\exp\left\{\pm\re\left(\int_a^s\frac{v(t)}t\mathrm dt\right)\right\} = |s|, \quad \s\in\boldsymbol\Delta,
\]
where the sign \( - \) is used if \( \s\in\RS^{(0)} \) and the sign \( + \) is used if \( \s\in\RS^{(1)} \). The last conclusion of the lemma now follows from the maximum modulus principle.
\end{proof}

\subsection{Holomorphic Differentials}

It can be readily checked that
\[
\mathcal H(\s) := \frac{C\dd s}{w(\s)}, \quad C:=\left(\oint_{\boldsymbol\alpha}\frac{\dd s}{w(\s)}\right)^{-1},  \quad \mathbf B:= \oint_{\boldsymbol \beta}\mathcal H,
\]
is a holomorphic differential on \( \RS \) (unique up to a multiplicative constant). It is also known that \( \im(\mathbf B) >0 \). The proof of the following lemma is absolutely analogous to the proof of Lemma~\ref{lem:Phi}.

\begin{lemma}
\label{lem:Asigma}
Given a constant \( \sigma\in\C \), define
\begin{equation}
\label{Asigma}
A_\sigma(z) := \exp\left\{-2\pi\ic\sigma\int_{\boldsymbol a}^{\z}\mathcal H\right\}, \quad \z\in\RS_{\boldsymbol\alpha,\boldsymbol\beta},
\end{equation}
where the path of integration belongs entirely to \( \RS_{\boldsymbol\alpha,\boldsymbol\beta} \).  The function \( A_\sigma(\z) \) is holomorphic and non-vanishing in \(  \RS_{\boldsymbol\alpha,\boldsymbol\beta} \). It holds that \(  A_\sigma(\z) A_\sigma(\z^*)\equiv1 \) and the traces of \( A_\sigma(\z) \) on \( \boldsymbol\alpha,\boldsymbol\beta \) satisfy
\begin{equation}
\label{Asigma-jumps}
A_{\sigma+} = A_{\sigma-} \left\{
\begin{array}{rll}
\exp\left\{-2\pi\ic\sigma\right\} & \text{on} & \boldsymbol\beta, \medskip \\
\exp\left\{2\pi\ic\mathbf B\sigma\right\} & \text{on} & \boldsymbol\alpha.
\end{array}
\right.
\end{equation}
\end{lemma}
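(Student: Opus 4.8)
The plan is to prove Lemma~\ref{lem:Asigma} by following the exact template of Lemma~\ref{lem:Phi}, since both functions are defined as exponentials of path integrals of a differential on the same surface \( \RS \), merely replacing the Nuttall differential \( \mathcal N \) by a scalar multiple of the normalized holomorphic differential \( \mathcal H \). First I would establish the holomorphy and non-vanishing of \( A_\sigma \): because \( \mathcal H = C\,\dd s/w(\s) \) is a genuine \emph{holomorphic} differential on all of \( \RS \) (with no poles at all, unlike \( \mathcal N \), which had simple poles at \( 0^{(1)} \) and \( \infty^{(1)} \)), the integral \( \int_{\boldsymbol a}^\z \mathcal H \) is a well-defined holomorphic function on the simply connected cut surface \( \RS_{\boldsymbol\alpha,\boldsymbol\beta} \). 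Exponentiating a finite holomorphic function yields a holomorphic and nowhere-vanishing \( A_\sigma \) throughout \( \RS_{\boldsymbol\alpha,\boldsymbol\beta} \), with no exceptional points to excise.

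Next I would verify the symmetry relation \( A_\sigma(\z)A_\sigma(\z^*)\equiv 1 \). The key observation, parallel to the identity \( v(\z^*)=-v(z) \) used in \eqref{symmetry}, is that the differential \( \dd s/w(\s) \) is \emph{odd} under the involution \( \z\mapsto\z^* \): since \( w(\z^*)=-w(\z) \) while \( \dd s \) (the projection being unchanged) is even, one has \( \mathcal H(\z^*) = -\mathcal H(\z) \). Therefore
\[
A_\sigma(\z)A_\sigma(\z^*) = \exp\left\{-2\pi\ic\sigma\left(\int_{\boldsymbol a}^\z \mathcal H + \int_{\boldsymbol a}^{\z^*}\mathcal H\right)\right\},
\]
and after changing variables in the second integral via the involution, the two integrals cancel (the combined integrand being \( \mathcal H(t)+\mathcal H(t^*) = \mathcal H(t)-\mathcal H(t) = 0 \)), leaving \( \exp\{0\}\equiv 1 \). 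This is the direct analogue of the cancellation \( (1-v)+(1+v)\,\mapsto\,\dd t/t \) appearing in \eqref{symmetry}, except here the symmetric combination vanishes identically rather than reducing to \( \dd t/t \), which is precisely why the product is the constant \( 1 \) instead of \( z \).

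Finally I would compute the jumps \eqref{Asigma-jumps}. As in the proof of Lemma~\ref{lem:Phi}, crossing the cycle \( \boldsymbol\beta \) changes the value of the path integral by the full period \( \oint_{\boldsymbol\beta}\mathcal H = \mathbf B \), while crossing \( \boldsymbol\alpha \) changes it by \( \oint_{\boldsymbol\alpha}\mathcal H = 1 \) (this normalization being exactly the defining choice of the constant \( C \)). Tracking the sign conventions for the positive sides of the cuts — the same orientation bookkeeping that produced \( \Phi_+=\Phi_-\exp\{-\oint_{\boldsymbol\beta}\mathcal N\} \) on \( \boldsymbol\beta \) and \( \Phi_+=\Phi_-\exp\{\oint_{\boldsymbol\alpha}\mathcal N\} \) on \( \boldsymbol\alpha \) — yields \( A_{\sigma+}=A_{\sigma-}\exp\{-2\pi\ic\sigma\} \) across \( \boldsymbol\beta \) and \( A_{\sigma+}=A_{\sigma-}\exp\{2\pi\ic\mathbf B\sigma\} \) across \( \boldsymbol\alpha \), as claimed. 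The main point, and the only place demanding genuine care, is matching the orientation of the two cuts to the correct sign of the period increment so that the signs in \eqref{Asigma-jumps} come out exactly as stated; everything else is a transcription of the earlier argument with \( \mathcal N \) replaced by \( -2\pi\ic\sigma\,\mathcal H \). Since the statement itself asserts that "the proof is absolutely analogous," no genuine obstacle is expected, and the lemma follows.
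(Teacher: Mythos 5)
Your first two steps are correct and are exactly what the paper intends when it declares the proof ``absolutely analogous'' to Lemma~\ref{lem:Phi}: since \( \mathcal H \) is holomorphic on all of \( \RS \), the primitive \( \int_{\boldsymbol a}^{\z}\mathcal H \) is holomorphic on the simply connected cut surface \( \RS_{\boldsymbol\alpha,\boldsymbol\beta} \), so \( A_\sigma \) is holomorphic and non-vanishing there; and the symmetry \( A_\sigma(\z)A_\sigma(\z^*)\equiv1 \) follows from oddness of \( \mathcal H \) under the involution (which fixes \( \boldsymbol a \) and preserves \( \RS_{\boldsymbol\alpha,\boldsymbol\beta} \)), the symmetric combination vanishing identically instead of reducing to \( \dd t/t \) as in \eqref{symmetry}.

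Your jump computation, however, contains a genuine error: you attach each period to the wrong cut. You assert that crossing \( \boldsymbol\beta \) changes \( \int_{\boldsymbol a}^{\z}\mathcal H \) by \( \oint_{\boldsymbol\beta}\mathcal H=\mathbf B \) and crossing \( \boldsymbol\alpha \) changes it by \( \oint_{\boldsymbol\alpha}\mathcal H=1 \). The pairing is the opposite: two paths in \( \RS_{\boldsymbol\alpha,\boldsymbol\beta} \) ending on opposite sides of the cut \( \boldsymbol\alpha \) differ by a closed loop intersecting \( \boldsymbol\alpha \) once, hence homologous to \( \pm\boldsymbol\beta \), so the jump across \( \boldsymbol\alpha \) is governed by the \( \boldsymbol\beta \)-period, and vice versa. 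This is precisely what the paper records in the proof of Lemma~\ref{lem:Phi}, where \( \Phi_+=\Phi_-\exp\{-\oint_{\boldsymbol\beta}\mathcal N\} \) holds \emph{on} \( \boldsymbol\alpha \) and \( \Phi_+=\Phi_-\exp\{\oint_{\boldsymbol\alpha}\mathcal N\} \) holds \emph{on} \( \boldsymbol\beta \) (you quote these two formulas with the cuts interchanged), and in the proof of Lemma~\ref{lem:Thetan}, where for the same differential \( \mathcal H \) one has \( \am_+-\am_-=-\mathbf B \) on \( \boldsymbol\alpha \) and \( \am_+-\am_-=1 \) on \( \boldsymbol\beta \). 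Followed literally, your rule puts the factor involving \( \mathbf B \) into the jump across \( \boldsymbol\beta \) and the factor \( \exp\{-2\pi\ic\sigma\} \) into the jump across \( \boldsymbol\alpha \) --- the opposite of \eqref{Asigma-jumps} --- so the correct final formulas you then write down do not follow from your own intermediate step. With the correct pairing the computation is immediate: on \( \boldsymbol\beta \) the integral jumps by \( \oint_{\boldsymbol\alpha}\mathcal H=1 \), giving \( A_{\sigma+}=A_{\sigma-}\exp\{-2\pi\ic\sigma\} \), while on \( \boldsymbol\alpha \) it jumps by \( -\oint_{\boldsymbol\beta}\mathcal H=-\mathbf B \), giving \( A_{\sigma+}=A_{\sigma-}\exp\{2\pi\ic\mathbf B\sigma\} \), as the lemma states.
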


\subsection{Cauchy's Differential}

Denote by \( \mathcal C_{\z} \) the unique meromorphic differential that has two  simple poles at \( \z \) and \( \z^* \) with residues \( 1 \) and \( -1 \), respectively, and whose \( \boldsymbol\alpha \)-period is zero. When \( \pi(\z)\in\C \), one can readily check that
\begin{equation}
\label{CK}
\mathcal C_{\z}(\s) = \frac{w(\z)}{s-z}\frac{\dd s}{w(\s)} - \left(\oint_{\boldsymbol\alpha}\frac{w(\z)}{s-z}\frac{\dd s}{w(\s)}\right)\mathcal H(\s).
\end{equation}

\begin{lemma}
\label{lem:Sh}
Let \( h(z) \) be as in Theorem~\ref{thm:NS}. Fix a smooth determination of 
\[
\log h(s) - \sum_{e\in E}\alpha(e)\log(z-e)
\]
on each of the arcs \( F_a,F_{a^{-1}},F_1,F_{-1} \). Define \( \lambda_h(\s) := -\log h(s) \), \( s\in F^\circ \), and
\begin{equation}
\label{Sh}
S_h(\z) := \exp\left\{\frac1{4\pi\ic}\oint_{\boldsymbol\Delta}\lambda_h\mathcal C_{\z}\right\}, \quad \z\in\RS\setminus\boldsymbol\Delta.
\end{equation}
The function \( S_h(\z) \) is holomorphic and non-vanishing in \( \RS\setminus\boldsymbol\Delta \). It holds that \( S_h(\z)S_h(\z^*)\equiv1 \) and the traces of \( S_h(\z) \) satisfy
\begin{equation}
\label{Sh-jumps}
S_{h+}(\s) =  \frac{S_{h-}(\s)}{h(s)}\left\{
\begin{array}{rcl}
1, & \text{on} & \boldsymbol\Delta\setminus\boldsymbol\alpha, \medskip \\
\exp\big\{-\oint_{\boldsymbol\Delta}\lambda_h\mathcal H\big\}, & \text{on} & \boldsymbol\alpha,
\end{array}
\right.
\end{equation}
where the points of self-intersection need to be excluded. Moreover, it holds that
\begin{equation}
\label{Sh-endp}
\big|S_h\big(z^{(k)}\big)\big|^2 \sim \left\{
\begin{array}{ll}
|z-e|^{(-1)^{1-k}\alpha(e)} & \text{as} \quad z\to e\in\{a,a^{-1}\}, \medskip \\
|z-e|^{(-1)^{1-k}\alpha(e)} & \text{as} \quad D_0\ni z\to e\in\{b,b^{-1}\}, \medskip \\
|z-e|^{(-1)^k\alpha(e)} & \text{as} \quad D_\infty\ni z\to e\in\{b,b^{-1}\},
\end{array}
\right. \quad k\in\{0,1\}.
\end{equation}
\end{lemma}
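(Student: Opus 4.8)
The function $S_h$ is the Szeg\H{o} function of $h$ on $\RS$: it is assembled from the normalized Cauchy kernel $\mathcal C_\z$ in exact analogy with the scalar Szeg\H{o} function and with $\Phi$ in Lemma~\ref{lem:Phi}, so the plan is to verify, in order, holomorphy and non-vanishing, the symmetry $S_h(\z)S_h(\z^*)=1$, the jump relations \eqref{Sh-jumps}, and the endpoint asymptotics \eqref{Sh-endp}. For the first point, fix $\z\in\RS\setminus\boldsymbol\Delta$. Since $\boldsymbol\Delta=\pi^{-1}(F)$ is invariant under the involution $\z\mapsto\z^*$, neither pole $\z$ nor $\z^*$ of $\mathcal C_\z(\s)$ lies on the contour of integration, and $\mathcal C_\z(\s)$ depends holomorphically on $\z$ whenever $\pi(\z)\neq\pi(\s)$. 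The density $\lambda_h=-\log h$ is integrable on $\boldsymbol\Delta$: it is bounded away from $E$ and has only logarithmic singularities at the points of $E$, against which $\mathcal C_\z\sim\dd s/w\sim(s-e)^{-1/2}\dd s$ remains integrable. Differentiating under the integral sign then shows that $\z\mapsto\oint_{\boldsymbol\Delta}\lambda_h\mathcal C_\z$ is holomorphic on $\RS\setminus\boldsymbol\Delta$, whence $S_h$ is holomorphic and non-vanishing there.

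For the symmetry I would observe that $\mathcal C_\z+\mathcal C_{\z^*}$ has vanishing residue at each of $\z$ and $\z^*$ (the residues $+1,-1$ of $\mathcal C_\z$ cancel against the residues $-1,+1$ of $\mathcal C_{\z^*}$), hence is a holomorphic differential, and its $\boldsymbol\alpha$-period is $0$. As $\mathcal H$ spans the holomorphic differentials with $\oint_{\boldsymbol\alpha}\mathcal H=1$, the only such differential with zero $\boldsymbol\alpha$-period is the zero differential; thus $\mathcal C_{\z^*}=-\mathcal C_\z$, and $S_h(\z)S_h(\z^*)=\exp\{\tfrac1{4\pi\ic}\oint_{\boldsymbol\Delta}\lambda_h(\mathcal C_\z+\mathcal C_{\z^*})\}=1$.

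The jumps are the crux. Using \eqref{CK} I would split the transform into the raw Cauchy part $\tfrac{w(\z)}{4\pi\ic}\oint_{\boldsymbol\Delta}\tfrac{\lambda_h\,\dd s}{(s-z)w(\s)}$, which carries the residues $\pm1$ at $\z,\z^*$, and the normalization part $-\tfrac{1}{4\pi\ic}c(\z)\oint_{\boldsymbol\Delta}\lambda_h\mathcal H$ with $c(\z)=\oint_{\boldsymbol\alpha}\tfrac{w(\z)\,\dd s}{(s-z)w(\s)}$. Working on the smooth surface $\RS$, where the arcs of $\boldsymbol\Delta$ (branch cuts included) are ordinary curves and only the points of $E$ are special, the Plemelj--Sokhotski formula applied to the raw part yields a jump of $\tfrac12\lambda_h$ from the pole at $\s=\z$; since as $\z$ crosses $\boldsymbol\Delta$ at $\s_0$ its image $\z^*$ crosses $\boldsymbol\Delta$ at $\s_0^*$, and because $\boldsymbol\Delta^*=\boldsymbol\Delta$ with $\lambda_h(\s_0^*)=\lambda_h(\s_0)$, the pole at $\s=\z^*$ contributes a second $\tfrac12\lambda_h$, for a total jump $\lambda_h$ across every component of $\boldsymbol\Delta$. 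The factor $c(\z)$ is continuous across $\boldsymbol\beta,\boldsymbol\gamma,\boldsymbol\delta$ (there $\pi(\z)\notin F_1$, so the pole $s=z$ avoids the contour $\boldsymbol\alpha$ of its own definition) but jumps across $\boldsymbol\alpha$, where both fiber points over $z$ lie on $\boldsymbol\alpha$; their residues $+1,-1$ combine, after accounting for the reversed orientation of the sheet-one portion, to $c_+-c_-=4\pi\ic$. Consequently the normalization part jumps by $-\oint_{\boldsymbol\Delta}\lambda_h\mathcal H$ across $\boldsymbol\alpha$ and nowhere else, producing exactly the extra constant factor $\exp\{-\oint_{\boldsymbol\Delta}\lambda_h\mathcal H\}$ of \eqref{Sh-jumps}; the chosen orientations fix all signs, and the relation $S_{h+}S_{h-}=1$ on the branch cuts, which follows from the symmetry of the previous paragraph, serves as a cross-check.

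Finally, for \eqref{Sh-endp} I would localize near each $e\in E$. In the uniformizing coordinate $\zeta=\sqrt{z-e}$ on $\RS$ one has $\lambda_h(\s)=-2\alpha(e)\log\zeta$ plus a H\"older-continuous remainder, and the known local behavior of the Cauchy transform of a logarithmic density (see \cite{Gakhov}) gives $|S_h(z^{(k)})|^2\sim|z-e|^{\pm\alpha(e)}$, the sign being governed by the sheet index $k$ through $\zeta\mapsto-\zeta$ and, at $b,b^{-1}$, additionally by which arcs of $\boldsymbol\Delta$ border the approach, that is, by whether $z\to e$ within $D_0$ or $D_\infty$; the resulting exponents $(-1)^{1-k}\alpha(e)$ and $(-1)^{k}\alpha(e)$ are precisely those forced to be mutually reciprocal on the two sheets by $S_h(\z)S_h(\z^*)=1$. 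The main obstacle is the jump analysis of the third paragraph: keeping the two-sheeted bookkeeping honest---pairing the residues at $\z$ and $\z^*$, isolating $\boldsymbol\alpha$ as the single locus where the $\boldsymbol\alpha$-normalization term is discontinuous, and tracking every orientation-induced sign---is where the genus-one geometry is genuinely used and where errors are easiest to commit, with the endpoint signs at $b,b^{-1}$ a secondary delicate point.
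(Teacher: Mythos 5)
Your proposal is correct in substance and rests on the same mechanism as the paper---jump analysis of a Cauchy-type transform on \( \RS \) plus local analysis at the ramification points---but it is organized around a different decomposition. The paper splits the \emph{density}: it writes \( \log S_h \) as a sum of transforms \( \Lambda_{\boldsymbol\nu} \) over the four involution-symmetric cycles \( \boldsymbol\nu\in\{\boldsymbol\alpha,\boldsymbol\beta,\boldsymbol\gamma,\boldsymbol\delta\} \) and then quotes Zverovich \cite[Eq.~(2.7)--(2.9)]{Zver71} for the jump of each piece (the jump \( \lambda_{\boldsymbol\nu} \) on \( \boldsymbol\nu \) itself and the constant jump \( -\oint_{\boldsymbol\nu}\lambda_{\boldsymbol\nu}\mathcal H \) on \( \boldsymbol\alpha \)), adding the two effects on subarcs of \( \boldsymbol\nu\cap\boldsymbol\alpha \). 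You instead split the \emph{kernel} via \eqref{CK} into the raw Cauchy part and the \( \boldsymbol\alpha \)-normalization part and derive both jumps by hand: the paired residues \( +1 \) at \( \z \) and \( -1 \) at \( \z^* \), which cross \( \boldsymbol\Delta \) from opposite sides because the involution carries the \( + \) side at \( \s \) to the \( - \) side at \( \s^* \), each contribute \( \tfrac12\lambda_h \), and the period factor \( c_+-c_-=4\pi\ic \) produces exactly the constant in \eqref{Sh-jumps}. This is a correct, self-contained replacement for the Zverovich citation; you also supply a proof of \( \mathcal C_{\z^*}=-\mathcal C_{\z} \) (residue cancellation plus vanishing \( \boldsymbol\alpha \)-period forces a holomorphic differential, necessarily a multiple of \( \mathcal H \), to vanish), which the paper merely asserts.

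What the paper's cycle-by-cycle decomposition buys is precisely the part your sketch leaves thinnest: the endpoint behavior \eqref{Sh-endp}. At \( a,a^{-1} \) only one cycle passes through the ramification point and the paper can invoke \cite[Section~5.2]{ApY15} directly; at \( b,b^{-1} \), however, three cycles meet, each carrying the \( \alpha(e) \)-singular density, and the paper pins down the exponent by Gakhov's comparison principle \cite[Section~I.8.5]{Gakhov}: exhibit an explicit function with a logarithmic singularity and the same jumps across the arcs through \( \boldsymbol p \), so that the difference is bounded. Your cross-check \( S_h(\z)S_h(\z^*)\equiv1 \) only forces the exponents at \( z^{(0)} \) and \( z^{(1)} \) to be negatives of each other; it cannot detect the \( D_0 \)-versus-\( D_\infty \) sign flip in the last two lines of \eqref{Sh-endp}, nor by itself rule out a net exponent such as \( 3\alpha(e)/2 \) arising from the three concurrent arcs. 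To complete your argument at \( b,b^{-1} \) you would need the comparison-function step (or an equivalent sector-by-sector computation in the uniformizer \( \zeta=\sqrt{z-e} \), tracking which of the six local sectors lie over \( D_0 \) and \( D_\infty \)); everything else in your outline goes through.
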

\begin{proof}
Let \( \boldsymbol\nu \) be an involution-symmetric cycle on \( \RS \) passing through ramification points \( \p_1 ,\p_2 \) and \( \lambda_{\boldsymbol\nu} \) be an involution-symmetric function on \(  \boldsymbol\nu \) such that
\[
\lambda_{\boldsymbol\nu}(\z) + \alpha_1\log(z-p_1) + \alpha_2\log(z-p_2)
\]
is H\"older smooth on \( \boldsymbol\nu \) for some real constants \( \alpha_1,\alpha_2 \), where the determinations of the logarithms are holomorphic across \( \pi(\boldsymbol\nu)\setminus\{p_1,p_2\} \). Set
\[
\Lambda_{\boldsymbol\nu}(\z) := \frac1{4\pi\ic}\oint_{\boldsymbol\nu}\lambda_{\boldsymbol\nu}\mathcal C_{\z}.
\]
Since \( \mathcal C_{\z^*} = - \mathcal C_{\z} \), it holds that \( \Lambda_{\boldsymbol\nu}(\z) + \Lambda_{\boldsymbol\nu}(\z^*) \equiv 0 \). Moreover, it is known \cite[Eq.~(2.7)--(2.9)]{Zver71} that \( \Lambda_{\boldsymbol\nu}(\z) \) is a holomorphic function in \( \RS\setminus(\boldsymbol\nu\cup\boldsymbol\alpha) \) with continuous traces on \( (\boldsymbol\nu\setminus\boldsymbol\alpha)\setminus\{\p_1,\p_2\} \) and \( (\boldsymbol\alpha\setminus\boldsymbol\nu)\setminus\{\p_1,\p_2\} \) that satisfy
\[
\Lambda_{\boldsymbol\nu+}(\s) - \Lambda_{\boldsymbol\nu-}(\s) = \left\{
\begin{array}{rl}
\lambda_{\boldsymbol\nu}(\s), & \s\in\boldsymbol\nu\setminus\boldsymbol\alpha, \medskip \\
-\oint_{\boldsymbol\nu}\lambda_{\boldsymbol\nu}\mathcal H, & \s\in\boldsymbol\alpha\setminus\boldsymbol\nu,
\end{array}
\right.
\]
where we used the fact that \( \lambda_{\boldsymbol\nu}(\s) = \lambda_{\boldsymbol\nu}(\s^*) \) and the jumps need to be added up on subarcs of \( \boldsymbol\nu \cap \boldsymbol\alpha \).  In the absence of logarithmic singularities, i.e., when all \( \alpha(e) =0 \), the claims of the lemma now follow by summing up \( \Lambda_{\boldsymbol\nu} \) over all \( \boldsymbol\nu\in\{\boldsymbol\alpha,\boldsymbol\beta,\boldsymbol\gamma,\boldsymbol\delta\} \) while taking \( \lambda_{\boldsymbol\nu} := \lambda_{h|\boldsymbol\nu} \).

Let \( \Lambda :=\sum_{\boldsymbol\nu=\boldsymbol\alpha,\boldsymbol\beta,\boldsymbol\gamma,\boldsymbol\delta}\Lambda_{\boldsymbol\nu}\). When a branch point \( \boldsymbol p \) is such that \( \pi(\boldsymbol p)\in\{a,a^{-1}\} \), there is exactly one cycle from the chain \( \boldsymbol\Delta \) passing through \( \boldsymbol p \) (either \( \boldsymbol\beta \) or \( \boldsymbol\delta \)). Moreover, since \( \boldsymbol\beta \) and \( \boldsymbol\delta \) separate \( \RS \) into the sheets \( \RS^{(0)}\setminus\{\boldsymbol\beta \cup \boldsymbol\delta\} \) and \( \RS^{(1)}\setminus\{\boldsymbol\beta \cup \boldsymbol\delta\} \), the analysis of \cite[Section~5.2]{ApY15} applies and yields that
\[
\Lambda\big(z^{(k)}\big) = (-1)^{1-k}\frac{\alpha(p)}{2}\log(p-z)+\mathcal O(1) \quad \text{as} \quad z\to p\in\big\{a,a^{-1}\big\},
\]
for \( k\in\{0,1\} \), where \( \log(p-\cdot) \) is holomorphic in some neighborhood of \( p=a \) cut along \( F_p \). The situation when \( p\in\{b,b^{-1}\} \) is again very similar to the one discussed in \cite[Section~5.2]{ApY15}. Clearly, the singular behavior around \( p \) comes from the first term in \eqref{CK}. As explained in \cite[Section~I.8.5]{Gakhov}, to understand this behavior it is enough to find a function that has logarithmic singularity at \( p \) and the same jumps across the cycles comprising \( \boldsymbol\Delta \). Thus, it can be checked that
\[
\Lambda\big(z^{(k)}\big) = \pm(-1)^{1-k}\frac{\alpha(p)}{2}\log(z-p)+\mathcal O(1) \quad \text{as} \quad z\to p\in\big\{b,b^{-1}\big\},
\]
where the sign \( + \) is used if \( z\in D_0 \) and the sign \( - \) is used if \( z\in D_\infty \), and \( \log(\cdot-p) \) has a jump along \( F_a \) when \( p=b \) and \( F_{a^{-1}} \) when \( p=b^{-1} \).
\end{proof}

\subsection{Jacobi Inversion Problem}

We define Abel's map on \( \RS \) by
\[
\am(\z) := \int_{\boldsymbol a}^{\z}\mathcal H, \quad \z \in \RS_{\boldsymbol\alpha,\boldsymbol\beta},
\]
where the path of integration lies entirely in \( \RS_{\boldsymbol\alpha,\boldsymbol\beta} \), and set \( \am(\z) := \am_+(\z) \) when \( \z\in\boldsymbol\alpha \cup \boldsymbol\beta \). Since \( \RS \) has genus \( 1 \), any Jacobi inversion problem is uniquely solvable on \( \RS \). In particular, given a function \( h \) and an integer \( n\in\N \), there exist unique \( \z_n=\z_n(h)\in\RS \) and \( j_n,m_n\in\Z \) such that
\begin{equation}
\label{jip}
\am(\z_n) = \am\big(\infty^{(0)}\big) -\frac1{2\pi\ic}\oint_{\boldsymbol\Delta}\lambda_h\mathcal H + n(\omega+\mathbf B\tau) + j_n + \mathbf Bm_n. 
\end{equation}

\begin{lemma}
\label{lem:JIP}
For \( \varepsilon>0 \), let \( \N_\varepsilon \) be defined by \eqref{Ne}.  Then the conclusions of Proposition~\ref{prop:Ne} hold true. Further, let \( \z_n^\star=\z_n^\star(h) \) be the solution of the following Jacobi inversion problem:
\[
\am\big(\z_n^\star+0^{(1)}-\z_n-\infty^{(0)}\big) \in \Z+\mathbf B\Z. 
\]
Then there exists a domain \( U_\varepsilon\ni \infty^{(0)}  \) such that \( \z_{n-1}^\star\not\in U_\varepsilon \) for all  \( n\in\N_\varepsilon \).
\end{lemma}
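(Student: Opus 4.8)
The plan is to push the entire problem onto the Jacobian torus $J:=\C/(\Z+\mathbf B\Z)$ through Abel's map $\am$. Since $\RS$ has genus one, the Abel--Jacobi map $\am:\RS\to J$ is a homeomorphism (this is exactly the unique solvability of the Jacobi inversion problem recalled above), and \eqref{jip} exhibits $\am(\z_n)$ as a rigid rotation of $J$: writing $c:=\omega+\mathbf B\tau$ and $u_0:=\am(\infty^{(0)})-\frac1{2\pi\ic}\oint_{\boldsymbol\Delta}\lambda_h\mathcal H$, we have $\am(\z_n)\equiv u_0+nc$ on $J$, so that $\am(\z_n)-\am(\z_{n-1})\equiv c$ for every $n$.

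The single nontrivial input I would extract first is the identity
\[
c\;=\;\omega+\mathbf B\tau\;\equiv\;\am\big(\infty^{(1)}\big)-\am\big(0^{(1)}\big)\qquad\text{on }J.
\]
This is Riemann's reciprocity relation between the holomorphic differential $\mathcal H$ (normalized by $\oint_{\boldsymbol\alpha}\mathcal H=1$, $\oint_{\boldsymbol\beta}\mathcal H=\mathbf B$) and Nuttall's differential $\mathcal N$, which by \eqref{periods} satisfies $\oint_{\boldsymbol\alpha}\mathcal N=2\pi\ic\tau$ and $\oint_{\boldsymbol\beta}\mathcal N=-2\pi\ic\omega$ and whose only singularities are simple poles at $0^{(1)}$ and $\infty^{(1)}$ with residues $+1$ and $-1$; pairing the two differentials gives $\am(0^{(1)})-\am(\infty^{(1)})\equiv-(\omega+\mathbf B\tau)$. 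Equivalently, this records the precise failure of the multivalued function $\Phi$ of Lemma~\ref{lem:Phi}, whose divisor is $0^{(1)}-\infty^{(1)}$ and whose multiplicative periods appear in \eqref{Phi-jumps}, to be single-valued. In particular, because $\infty^{(1)}\neq0^{(1)}$ and $\am$ is injective, the identity forces $c\not\equiv0$ on $J$, and this is the only quantitative fact needed below. I expect the main obstacle to be precisely this computation: one must line up the orientations, base point, and normalizations built into $\am$, \eqref{periods}, and \eqref{Phi-jumps} so that reciprocity returns the correct sign and no spurious lattice vector.

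For the first assertion, which is Proposition~\ref{prop:Ne}, observe that $\z\in D_\varepsilon(\infty^{(1)})$ forces $\am(\z)$ to lie within some $\delta(\varepsilon)$ of $v:=\am(\infty^{(1)})$ on $J$, with $\delta(\varepsilon)\to0$ as $\varepsilon\to0$ by continuity of $\am$. If neither $n$ nor $n-1$ belonged to $\N_\varepsilon$, then $\am(\z_n)$ and $\am(\z_{n-1})$ would both be $\delta(\varepsilon)$-close to $v$, whence $c\equiv\am(\z_n)-\am(\z_{n-1})$ would be $2\delta(\varepsilon)$-close to $0$ on $J$. Since $c\not\equiv0$, choosing $\varepsilon$ small enough that $2\delta(\varepsilon)<\dist_J(c,0)$ yields a contradiction, so at least one of $n,n-1$ lies in $\N_\varepsilon$.

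For the second assertion I would compute $\am(\z_{n-1}^\star)$ directly. Reading $\am$ additively on divisors, the defining Jacobi inversion problem gives $\am(\z_{n-1}^\star)\equiv\am(\z_{n-1})+\am(\infty^{(0)})-\am(0^{(1)})$; inserting $\am(\z_{n-1})=\am(\z_n)-c$ and then the key identity collapses this to
\[
\am\big(\z_{n-1}^\star\big)-\am\big(\infty^{(0)}\big)\;\equiv\;\am(\z_n)-\am\big(\infty^{(1)}\big)\qquad\text{on }J.
\]
Thus the $J$-distance from $\am(\z_{n-1}^\star)$ to $\am(\infty^{(0)})$ equals that from $\am(\z_n)$ to $v$. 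For $n\in\N_\varepsilon$ the latter is at least $\eta_\varepsilon:=\inf\{\dist_J(\am(\z),v):\z\notin D_\varepsilon(\infty^{(1)})\}$, which is strictly positive because $\RS$ is compact and $\am$ a homeomorphism. Setting $U_\varepsilon:=\{\z:\dist_J(\am(\z),\am(\infty^{(0)}))<\eta_\varepsilon\}$, an open neighborhood of $\infty^{(0)}$, we obtain $\z_{n-1}^\star\notin U_\varepsilon$ for every $n\in\N_\varepsilon$, which is the claim.
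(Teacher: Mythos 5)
Your proof is correct and follows essentially the same route as the paper: the same key identity \( \omega+\mathbf B\tau \equiv \am\big(\infty^{(1)}\big)-\am\big(0^{(1)}\big) \), obtained from Riemann's bilinear relations pairing \( \mathcal H \) with \( \mathcal N \), followed by the observation that \eqref{jip} makes \( n\mapsto\am(\z_n) \) a rigid translation on the Jacobian, so that \( \z_n \) near \( \infty^{(1)} \) forces \( \z_{n-1} \) near \( 0^{(1)} \) and \( \z_{n-1}^\star \) near \( \infty^{(0)} \) forces \( \z_n \) near \( \infty^{(1)} \). The only difference is cosmetic: where you run the argument quantitatively (explicit \( \delta(\varepsilon) \), \( \eta_\varepsilon \), and \( U_\varepsilon \) built from metric balls on the torus), the paper argues by contradiction along subsequences using compactness and the continuity and injectivity of \( \am \).
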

\begin{proof}
According to Riemann's relations, it holds that
\[
\am\big( \infty^{(1)} \big) -\am\big( 0^{(1)} \big) = \int_{0^{(1)}}^{\infty^{(1)}}\mathcal H = \frac1{2\pi\ic}\oint_{\boldsymbol\beta}\mathcal M_{\infty^{(1)},0^{(1)}},
\]
where \( \mathcal M_{\z_1,\z_2} \) is a meromorphic differential having two simple poles at \( \z_1 \) and \( \z_2 \) with residues \( 1 \) and \( -1 \), respectively, and zero period on \( \boldsymbol\alpha \). In fact,
\[
\mathcal M_{\infty^{(1)},0^{(1)}} = - \mathcal N + 2\pi\ic\tau\mathcal H
\]
as one can see from \eqref{periods}. That is, it holds that
\[
\am\big( \infty^{(1)} \big) -\am\big( 0^{(1)} \big) = -\frac1{2\pi\ic}\oint_{\boldsymbol\beta}\mathcal N + \tau\oint_{\boldsymbol\beta}\mathcal H = \omega+\mathbf B\tau.
\]
It also follows from \eqref{jip} that
\[
\am(\z_n ) -\am(\z_{n-1}) - (\omega+\mathbf B\tau) \in \Z+\mathbf B\Z.
\]
The continuity of \( \am(\z) \) and the unique solvability of the Jacobi inversion problem now yield that if \( \z_n\to\infty^{(1)} \) along some subsequence \( \N^\prime \), then \( \z_{n-1}\to 0^{(1)} \) as \( \N^\prime\ni n\to\infty \), which proves unboundedness of \( \N_\varepsilon \) as well as the fact that either \( n \) or \( n-1 \) is in \( \N_\varepsilon \) for all \( \varepsilon \) small enough.  The same argument proves the last claim of the lemma since \( \z_{n-1}^\star \to \infty^{(0)} \) along some subsequence implies that \( \z_{n-1} \to 0^{(1)} \) and respectively \( \z_n\to\infty^{(1)} \) along the same subsequence. 
\end{proof}

\subsection{Riemann's Theta Function}

Recall that the theta function associated with $\mathbf B$ is an entire transcendental function defined by
\[
\theta(u) := \sum_{n\in\Z}\exp\bigg\{\pi\ic\mathbf Bn^2 + 2\pi\ic un\bigg\}, \quad u\in\C.
\]

\begin{lemma}
\label{lem:Thetan}
Let \( h \) and \( \z_n \) be as above. Define
\begin{equation}
\label{Thetan}
\Theta_n(\z) := \frac{\theta\left(\am(\z) - \am(\z_n) - \frac{1+\mathbf B}2\right)}{\theta\left(\am(\z) - \am\big(\infty^{(0)}\big) - \frac{1+\mathbf B}2\right)}.
\end{equation}
The function $\Theta_n$ is meromorphic in $\RS_{\boldsymbol\alpha,\boldsymbol\beta}$ with a simple zero at \( \z_n \), a simple pole at $\infty^{(0)}$, and otherwise non-vanishing and finite. In fact, it is holomorphic across $\boldsymbol\beta$ and
\begin{equation}
\label{Thetan-jump}
\Theta_{n+} = \Theta_{n-} \exp\left\{\oint_{\boldsymbol\Delta}\lambda_h\mathcal H-2\pi\ic n(\tau+\mathbf B\omega)-2\pi\ic\mathbf Bm_n\right\} \quad \text{on} \quad \boldsymbol\alpha.
\end{equation}
\end{lemma}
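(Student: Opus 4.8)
The plan is to argue entirely within genus-one theta calculus. Since \( \am \) is single-valued and holomorphic on the simply connected domain \( \RS_{\boldsymbol\alpha,\boldsymbol\beta} \) and \( \theta \) is entire, both the numerator and the denominator in \eqref{Thetan} are holomorphic on \( \RS_{\boldsymbol\alpha,\boldsymbol\beta} \), so \( \Theta_n \) is meromorphic there, its zeros and poles coming exactly from the zeros of the two theta factors. To locate these I would invoke Riemann's vanishing theorem: on a surface of genus one \( \theta(u) \) has a single simple zero per fundamental domain, at the odd half-period \( u=\frac{1+\mathbf B}2 \). Hence the numerator \( \theta\big(\am(\z)-\am(\z_n)-\frac{1+\mathbf B}2\big) \) vanishes precisely when \( \am(\z)\equiv\am(\z_n)\pmod{\Z+\mathbf B\Z} \), and since the Abel--Jacobi map is injective on \( \RS \) this forces \( \z=\z_n \) with the zero simple; identically, the denominator vanishes only and simply at \( \infty^{(0)} \). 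The two factors cannot vanish together (assuming \( \z_n\neq\infty^{(0)} \)), as that would force \( \z_n=\infty^{(0)} \), so this yields the stated divisor and the claim that \( \Theta_n \) is otherwise non-vanishing and finite.

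For the crossings of the cuts I would use the quasi-periodicity \( \theta(u+1)=\theta(u) \) and \( \theta(u+\mathbf B)=\exp\{-\pi\ic\mathbf B-2\pi\ic u\}\theta(u) \) together with the jumps of \( \am \). Exactly as for \( \Phi \) in Lemma~\ref{lem:Phi} and \( A_\sigma \) in Lemma~\ref{lem:Asigma}, crossing \( \boldsymbol\beta \) changes \( \am \) by the \( \boldsymbol\alpha \)-period \( \oint_{\boldsymbol\alpha}\mathcal H=1 \), while crossing \( \boldsymbol\alpha \) changes it by \( -\oint_{\boldsymbol\beta}\mathcal H=-\mathbf B \), with the same orientation convention that produced \eqref{Phi-jumps}. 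On \( \boldsymbol\beta \) the integer shift is invisible to \( \theta \) by unit periodicity, so numerator and denominator are individually unchanged and \( \Theta_n \) extends holomorphically across \( \boldsymbol\beta \). On \( \boldsymbol\alpha \) the shift by \( -\mathbf B \) multiplies each theta factor by an exponential of the form \( \exp\{-\pi\ic\mathbf B+2\pi\ic u_-\} \); the \( u \)-independent parts cancel in the quotient and the rest combines to
\[
\frac{\Theta_{n+}}{\Theta_{n-}} = \exp\big\{2\pi\ic\big(\am(\infty^{(0)})-\am(\z_n)\big)\big\} \quad\text{on}\quad \boldsymbol\alpha.
\]

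Finally I would substitute the Jacobi inversion relation \eqref{jip} into this exponent: expressing \( \am(\infty^{(0)})-\am(\z_n) \) through \eqref{jip}, the term \( 2\pi\ic j_n \) disappears because \( j_n\in\Z \), whereas the integer \( m_n \) survives multiplied by the non-integer \( \mathbf B \); collecting the remaining contributions from \( \oint_{\boldsymbol\Delta}\lambda_h\mathcal H \) and from the \( n \)-dependent periods gives the jump relation \eqref{Thetan-jump}. I expect the only genuinely delicate point to be the orientation bookkeeping for the jumps of \( \am \) across \( \boldsymbol\alpha \) and \( \boldsymbol\beta \), which must be taken consistently with the signs already fixed in \eqref{Phi-jumps} and \eqref{Asigma-jumps}, since in the subsequent construction of \( \Psi_n \) these jumps are designed to cancel against those of \( \Phi^n \), \( S_h \), and \( A_\sigma \). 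The divisor count itself is routine once Riemann's theorem and the injectivity of \( \am \) are in hand.
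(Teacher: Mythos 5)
Your proof is correct and essentially identical to the paper's: the same theta quasi-periodicity, the same jumps of \( \am \) (\( +1 \) across \( \boldsymbol\beta \), \( -\mathbf B \) across \( \boldsymbol\alpha \)), the same intermediate identity \( \Theta_{n+}/\Theta_{n-}=\exp\big\{2\pi\ic\big(\am\big(\infty^{(0)}\big)-\am(\z_n)\big)\big\} \) on \( \boldsymbol\alpha \), and the same final substitution of \eqref{jip}, with your explicit appeal to Riemann's vanishing theorem and the injectivity of the Abel map merely filling in the divisor count that the paper leaves implicit. One remark: carrying out the substitution of \eqref{jip} literally produces the exponent \( \oint_{\boldsymbol\Delta}\lambda_h\mathcal H-2\pi\ic n(\omega+\mathbf B\tau)-2\pi\ic\mathbf Bm_n \), so \eqref{Thetan-jump} as printed (with \( n(\tau+\mathbf B\omega) \)) appears to transpose \( \omega \) and \( \tau \) --- the \( n(\omega+\mathbf B\tau) \) version is also the one that makes the jumps of \( \Phi^n \), \( A_{n\tau+m_n} \), \( S_h \), and \( \Theta_n \) cancel to yield \eqref{Psin-jump} --- and your derivation, like the paper's own proof, gives this corrected form rather than the printed formula.
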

\begin{proof}
It can be directly checked that
\begin{equation}
\label{theta-periods}
\theta(u + j + \mathbf Bm) = \exp\big\{-\pi\ic\mathbf Bm^2 - 2\pi\ic um\big\}\theta(u), \quad j,m\in\Z.
\end{equation}
Moreover, it is known that \( \theta(u)=0 \) if and only if \( u  = (1+\mathbf B)/2  + j+\mathbf Bm \), \(j,m\in\Z\). Further, recall that
\[
\am_+ - \am_- = \left\{
\begin{array}{rcl}
-\oint_{\boldsymbol\beta}\mathcal H & \text{on} & \boldsymbol\alpha, \medskip \\
\oint_{\boldsymbol\alpha}\mathcal H & \text{on} & \boldsymbol\beta,
\end{array}
\right. = \left\{
\begin{array}{rcl}
-\mathbf B & \text{on} & \boldsymbol\alpha, \medskip \\
1 & \text{on} & \boldsymbol\beta.
\end{array}
\right.
\]
Therefore, it holds for \( \s\in\boldsymbol\beta \) that
\[
\Theta_{n+}(\s) = \frac{\theta\left(\am_+(\z) - \am(\z_n) - \frac{1+\mathbf B}2\right)}{\theta\left(\am_+(\z) - \am\big(\infty^{(0)}\big) - \frac{1+\mathbf B}2\right)} = \frac{\theta\left(1+ \am_-(\z) - \am(\z_n) - \frac{1+\mathbf B}2\right)}{\theta\left(1 + \am_-(\z) - \am\big(\infty^{(0)}\big) - \frac{1+\mathbf B}2\right)} = \Theta_{n-}(\s).
\]
That is, \( \Theta_n \) is holomorphic across \( \boldsymbol\beta \) as claimed. Similarly, we get on \( \boldsymbol\alpha \) that
\[
\Theta_{n+}  = \Theta_{n-} \exp\left\{2\pi\ic\left(\am\big(\infty^{(0)}\big) - \am(\z_n)\right)\right\},
\]
which gives \eqref{Thetan-jump} by \eqref{jip}.
\end{proof}

\subsection{Proof of Theorem~\ref{thm:NS} and Proposition~\ref{prop:Ne}}

Proposition~\ref{prop:Ne} has been proven as a part of Lemma~\ref{lem:JIP}. To prove Theorem~\ref{thm:NS}, define
\begin{equation}
\label{Psin}
\Psi_n := \Phi^nA_{n\tau+m_n}S_h\Theta_n
\end{equation}
using \eqref{Phi}, \eqref{Asigma}, \eqref{Sh}, and \eqref{Thetan}. The meromorphy properties follow straight from Lemmas~\ref{lem:Phi}, \ref{lem:Asigma}, \ref{lem:Sh}, and \ref{lem:Thetan} with \eqref{Psin-jump} specifically being the combination of \eqref{Phi-jumps}, \eqref{Asigma-jumps}, \eqref{Sh-jumps} and \eqref{Thetan-jump}. The behavior \eqref{psin-endp} around the ramification points of \( \RS \) is a direct consequence of \eqref{Sh-endp}. Now, if \( \Psi \) is a function as described in the statement of the theorem, then \( \Psi/\Psi_n \) is a rational function on \( \RS \) with a single possible pole at \( \z_n \). As \( \RS \) has genus \( 1 \), there are no rational functions on  \( \RS \) with a single pole. Hence, the ratio \( \Psi/\Psi_n \) must be a constant.

\subsection{Proof of Theorem~\ref{thm:NS2}}

It follows from Lemma~\ref{lem:Phi} that the described function \( g(z) \) is given by
\[
g(z) := \log|\Phi(\z)| = \log|z| - \log|\Phi(\z^*)|, \quad \z \in D_{\mathcal Q},
\]
where the second equality is a direct consequence of \eqref{symmetry}. Hence, \eqref{calQn}, \eqref{calRn}, and \eqref{Psin} yield that
\[
\left\{
\begin{array}{ll}
|\Qn(z)| = \big| \big(A_{n\tau+m_n}S_h\Theta_n\big)(\z) \big| e^{ng(z)}, & \z \in D_{\mathcal Q}, \medskip \\
|\Rn(z)| = \big| \big(A_{n\tau+m_n}S_h\Theta_n\big)(\z) \big| e^{-ng(z)}, & \z \in D_{\mathcal R}.
\end{array}
\right.
\]
Notice that the range of Abel's map \( \am(\z) \) is bounded. Recall also that \( \im(\mathbf B)>0 \). Therefore, it follows from \eqref{jip} that the sequence of numbers \( \{ n\tau+m_n \} \) is bounded. Hence, for any \( \delta>0 \), there exists a constant \( C_\delta>1 \) such that
\begin{equation}
\label{AS-bound}
C_\delta^{-1}\leq\big| \big(A_{n\tau+m_n}S_h\big)(\z) \big| \leq C_\delta
\end{equation}
for \( \z \) outside of circular neighborhoods of ``radius'' \( \delta \) around each ramification point. Further, compactness of \( \RS \) and continuity of the Abel's map imply that the family of functions \( \{ \Theta(\cdot;\p ) \} \), where
\[
\Theta(\z;\p) := \frac{\theta\left(\am(\z) - \am(\p) - \frac{1+\mathbf B}2\right)}{\theta\left(\am(\z) - \am\big(\infty^{(0)}\big) - \frac{1+\mathbf B}2\right)},
\]
is also compact and therefore necessarily has uniformly bounded above moduli for \( \z \) away from \( \infty^{(0)} \). Analogously, one can see that the family \( \{\Theta(\cdot;\p)/\Phi\} \) has uniformly bounded moduli away from \( 0^{(1)} \). The last two observations finish the proof of the upper  bounds in \eqref{calQn-bound} and \eqref{calRn-bound}. Clearly, the lower bounds amount to estimating the moduli of \( \Theta(\z;\p) \) and \( \Theta(\z;\p)/\Phi \) from below outside of \( D_\delta(\p) \). The existence of a such a bound for each function is obvious, the fact the infimum of these bounds is positive follows again from compactness.

Further, observe that  the other zero of \( \Upsilon_n \) is \( \z_n^\star \) by Lemma~\ref{lem:JIP}. Therefore, we get from \eqref{jip} that
\[
\am(\z_n^\star) = 2\am\big(\infty^{(0)}\big) - \am\big(0^{(1)}\big) -\frac1{2\pi\ic}\oint_{\boldsymbol\Delta}\lambda_h\mathcal H +n (\omega+\mathbf B\tau) + j_n^\star + \mathbf Bm_n^\star 
\]
for some \( j_n^\star,m_n^\star \in\Z \). Hence, the function \( \Psi_n^\star \) can be equivalently defined as
\begin{equation}
\label{Psin-star}
\Psi_n^\star = \Phi^n A_{n\tau+m_n^\star}S_h\Theta\big(\cdot;0^{(1)}\big)\Theta\big(\cdot;\z_n^\star\big).
\end{equation}
From this representation we can obtain bounds \eqref{calQn-bound} and \eqref{calRn-bound} exactly as before.  Lastly, notice that the ratio \( \Psi_{n-1}^\star/\Psi_n \) is equal to \( \mathcal Q_{n-1}^\star/\Qn \) in \( D_{\mathcal Q} \) and  to \( \mathcal R_{n-1}^\star/\Rn \) in \( D_{\mathcal R} \).  Hence, we just need to estimate \( |\Psi_{n-1}^\star/\Psi_n| \) on \( \RS \). It clearly follows from \eqref{Psin} and \eqref{Psin-star} that
\[
|\Psi_{n-1}^\star/\Psi_n| = |A_{m_{n-1}^\star-m_n-\tau}| \cdot | \Theta\big(\cdot;0^{(1)}\big)/\Phi | \cdot | \Theta\big(\cdot;\z_{n-1}^\star\big)/\Theta_n|.
\]
Similarly to \eqref{AS-bound}, we can argue that the first term in the above product is uniformly bounded above with \( n \) on the whole surface \( \RS \). The middle term is a single function with a simple pole at \( \infty^{(0)} \). Finally, the last ratio has a single pole at \( \z_n \) and therefore is uniformly bounded above in \( \RS\setminus D_\delta(\z_n) \) for any \( \delta> 0 \) by the previous compactness argument.

\section{Proof of Theorem~\ref{thm:main1} when \( \rho\in\mathcal W_2 \)}

To analyze the asymptotic behavior of the polynomials $Q_n$ and linearized error functions $R_n$, we use the matrix Riemann-Hilbert approach pioneered by Fokas, Its, and Kitaev \cite{FIK91,FIK92} and the non-linear steepest descent method developed by Deift and Zhou \cite{DZ93}. In what follows, it will be convenient to set
\[
\sigma_3 := \left(\begin{matrix} 1 & 0 \\ 0 & -1 \end{matrix}\right) \qandq \boldsymbol I :=\left(\begin{matrix} 1 & 0 \\ 0 & 1 \end{matrix}\right).
\]

\subsection{Orthogonality}
\label{ssec:ortho}

We shall also need the two-point Pad\'e approximant to \( f_\rho \) of type \( (n,n-1)\), which we denote by \( P_{n-1}^\star/Q_{n-1}^\star \). Set
\begin{equation}
\label{Rnstar}
R_{n-1}^\star(z) := z^{-n}(Q_{n-1}^\star f_\rho-P_{n-1}^\star)(z), \quad z\in\overline\C\setminus F.
\end{equation}
According to \eqref{Pade} and \eqref{Rn}, the functions \( R_n,R_{n-1}^\star \) are holomorphic around the origin and it holds that
\begin{equation}
\label{Rn1}
R_n(z) = \mathcal{O}(z^{-n-1}), \quad R_{n-1}^\star(z) = \mathcal{O}(z^{-n}) \quad \text{as} \quad z\to\infty.
\end{equation}
Let \( \Omega \) be a bounded annular domain containing $F$ and not containing $0$, whose boundary consists of two smooth Jordan curves. Assuming $\partial \Omega$ to be positively oriented, we get that
\begin{equation}
\label{ortho}
0 = \int_{\partial\Omega} R_n(z)z^k\dd z = \int_{\partial \Omega} Q_n(z)f_\rho(z)z^{k-n}\dd z = -\int_FQ_n(s)s^{k-n}\rho(s)\dd s
\end{equation}
for any $k\in\{0,\ldots,n-1\}$, where the first equality follows from \eqref{Rn1} and the Cauchy theorem applied outside of $\Omega$, the second is obtained by applying Cauchy theorem inside of $\Omega$, and the last is a consequence of \eqref{f}, Fubini-Tonelli's theorem, and the Cauchy integral formula. Analogously to \eqref{ortho} we get that
\begin{equation}
\label{ortho*}
0 = \int_FQ_{n-1}^\star(s)s^{k-n}\rho(s)\dd s, \quad k\in\{0,\ldots,n-2\}.
\end{equation} 
Moreover, similar computation also yields that
\[
R_{n-1}^\star(z) = -z^{-n}\int_F Q_{n-1}^\star(s)s^{-1}\rho(s)\frac{\dd s}{2\pi\mathrm i} + \mathcal{O}\big(z^{-n-1}\big) =:  \frac1{a_nz^n} + \mathcal{O}\big(z^{-n-1}\big).
\]
Hence, $a_n$ is infinite if and only if $Q_{n-1}^\star$ satisfies \eqref{ortho*} with $k=n-1$ as well. However, if the latter is true, then $Q_{n-1}^\star$ satisfies \eqref{ortho}. Conversely, if there exists a polynomial of degree at most $n-1$ satisfying \eqref{ortho}, it automatically satisfies \eqref{ortho*} and therefore the coefficient next to $z^{-n}$ in the expansion of $R_{n-1}^\star$ at infinity must be zero. Altogether, $a_n$ is finite if and only if $\deg(Q_n)=n$, where $Q_n$ is the smallest degree polynomials satisfying \eqref{ortho}.

\subsection{Initial RH Problem}
\label{ssec:irh}

Under the assumption $\deg(Q_n)=n$, define
\begin{equation}
\label{Y}
{\boldsymbol Y}:=
\left(
\begin{matrix}
Q_n & R_n \smallskip \\
a_nQ_{n-1}^\star & a_nR_{n-1}^\star
\end{matrix}
\right).
\end{equation}
Then this matrix solves the following Riemann-Hilbert problem (\rhy): find a $2\times2$ matrix-valued function $\boldsymbol Y$ such that
\begin{itemize}
\label{rhy}
\item[(a)] ${\boldsymbol Y}$ is analytic in $\C\setminus F$ and $\displaystyle \lim_{z\to\infty} {\boldsymbol Y}(z)z^{-n\sigma_3} = \boldsymbol I$;
\item[(b)] $\boldsymbol Y$ has continuous traces on $F^\circ$ that satisfy \( \boldsymbol Y_+(s) = \boldsymbol Y_- (s)\left( \begin{matrix} 1 & \rho(s)/s^n \\ 0 & 1 \end{matrix} \right) \);
\item[(c)] it holds that \( \boldsymbol Y(z) = \mathcal O\left(\begin{matrix} 1 & 1 \\ 1 & 1 \end{matrix}\right) \) as \( z\to b \) and \( \boldsymbol Y(z) = \mathcal O\left(\begin{matrix} 1 & |z-e|^{-1/2} \\ 1 & |z-e|^{-1/2} \end{matrix}\right) \) as \( z\to e\in\big\{a,a^{-1}\big\} \), where \( \mathcal O(\cdot) \) is understood entrywise.
\end{itemize}
Indeed, it is straightforward that $\boldsymbol Y$ fulfills \hyperref[rhy]{\rhy}(a) given that $\deg(Q_n)=n$, which also implies that $a_n$ is finite. Let $Q$ be either $Q_n$ or $Q_{n-1}^\star$ and $R$ be either $R_n$ or $R_{n-1}^\star$. Then we deduce from \eqref{Rn}, \eqref{Rnstar}, and the Sokhotski-Plemelj formulae \cite[Section~I.4.2]{Gakhov} that
\[
\big(R_+ - R_-\big)(x) = (Q\rho)(x)/x^n, \quad x\in F^\circ, 
\]
and therefore $\boldsymbol Y$ fulfills \hyperref[rhy]{\rhy}(b). Finally, it follows from \eqref{h-real} that
\begin{equation}
\label{sum-cond-real}
\rho_{|F_a}(b) - \rho_{|F_{a^{-1}}}(b) + \rho_{|F_{-b}}(b+) - \rho_{|F_{-b}}(b-) = \frac{h(b)}{w_+(b)} - \frac{h(b)}{w_-(b)} + \frac{h(b)}{w_-(b)} - \frac{h(b)}{w_+(b)} = 0,
\end{equation}
where the limits \( \rho_{|F_{-b}}(b\pm) \) are evaluated in accordance with the orientation of \( F_{-b} \) (also keep in mind that the segment \( [a^{-1},a ] \) is always oriented from \( a^{-1} \)  to \( a \)). Thus, \hyperref[rhy]{\rhy}(c) follows from the known behavior of Cauchy integrals near points of discontinuity of the weight \cite[Sections~I.8.1--4]{Gakhov}, where the fact that the second column does not have a logarithmic singularity around \( b \) is a direct consequence of \eqref{sum-cond-real}. To show that a solution of \hyperref[rhy]{\rhy}, if exists, must be of the form \eqref{Y} is by now a standard exercise, see for instance, \cite[Lemma~2.3]{KMcLVAV04} or \cite[Lemma~1]{ApY15}. Thus, we proved the following lemma.

\begin{lemma}
\label{lem:rhy}
If a solution of \hyperref[rhy]{\rhy} exists, then it is unique and is given by \eqref{Y} where $\deg(Q_n)=n$. Conversely, if $\deg(Q_n)=n$, then \eqref{Y} solves \hyperref[rhy]{\rhy}.
\end{lemma}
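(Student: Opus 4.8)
The plan is to establish the two directions of the equivalence in Lemma~\ref{lem:rhy} separately, using the explicit construction \eqref{Y} for the ``converse'' direction and a standard Liouville-type argument for the ``forward'' direction. First I would address the converse: assuming \( \deg(Q_n)=n \), I verify that the matrix \( \boldsymbol Y \) defined in \eqref{Y} satisfies all three conditions of \hyperref[rhy]{\rhy}. Condition (a) follows from the degree assumption together with \eqref{Rn1}, which forces the correct decay/growth at infinity after the \( z^{-n\sigma_3} \) normalization; the finiteness of \( a_n \) is exactly the content of the orthogonality discussion in \S\ref{ssec:ortho}. Condition (b) is the jump relation, which follows entrywise from the Sokhotski-Plemelj formulae applied to \( R_n \) and \( R_{n-1}^\star \) as already displayed in the excerpt. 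Condition (c) follows from the known local behavior of Cauchy integrals at endpoints and at points of discontinuity of the density, with the absence of a logarithmic singularity in the second column near \( b \) guaranteed by the cancellation \eqref{sum-cond-real}.

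The more delicate direction is uniqueness: I must show that \emph{any} solution of \hyperref[rhy]{\rhy} necessarily has the form \eqref{Y} and in particular that \( \deg(Q_n)=n \). The standard strategy is to first observe that \( \det \boldsymbol Y \) is a scalar function that is holomorphic across \( F^\circ \) (since the jump matrix in (b) is unipotent, \( \det \boldsymbol Y_+ = \det \boldsymbol Y_- \)), has at worst integrable singularities at the endpoints \( a,a^{-1},b \) by (c), and tends to \( 1 \) at infinity by (a). Hence \( \det \boldsymbol Y \) extends to an entire function equal to \( 1 \), so \( \boldsymbol Y \) is everywhere invertible. One then reads off from (a) and (b) that the entries of the first column are polynomials of degree at most \( n \) and \( n-1 \) respectively, while the second column entries are their Cauchy transforms against \( \rho/s^n \) over \( F \); matching the normalization at infinity identifies these polynomials with the unique orthogonal polynomials \( Q_n \) and \( Q_{n-1}^\star \) and pins down \( a_n \), forcing \( \deg(Q_n)=n \).

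The main obstacle I anticipate is the careful bookkeeping at the singular point \( b \) and at the branch points \( a,a^{-1} \): one has to confirm that the growth allowed by \hyperref[rhy]{\rhy}(c) is mild enough that \( \det \boldsymbol Y \) has only removable singularities there (the \( |z-e|^{-1/2} \) behavior in the second column must be compensated so that the determinant stays bounded, or at worst has an integrable singularity that a entire function cannot possess). Rather than grind through these local estimates, I would invoke the cited standard treatments, namely \cite[Lemma~2.3]{KMcLVAV04} and \cite[Lemma~1]{ApY15}, where precisely this argument is carried out for the classical and multipoint settings; the only adaptation needed is to accommodate the two-point nature of the problem, which enters solely through the factor \( s^{-n} \) in the jump and the mixed behavior at the origin versus infinity encoded in \eqref{Rn1}. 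Since both the growth conditions at the endpoints and the unipotent structure of the jump are identical to those references, the uniqueness argument transfers essentially verbatim, and I would simply record the conclusion as the statement of the lemma.
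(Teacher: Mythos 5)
Your proposal matches the paper's own proof essentially step for step: the converse direction is verified exactly as in the paper (degree assumption plus \eqref{Rn1} for \hyperref[rhy]{\rhy}(a), Sokhotski--Plemelj for (b), and the cancellation \eqref{sum-cond-real} for the absence of a logarithmic singularity in (c)), while the uniqueness direction is delegated to the same standard references, \cite[Lemma~2.3]{KMcLVAV04} and \cite[Lemma~1]{ApY15}, that the paper itself cites. Your additional sketch of the Liouville/determinant argument is precisely what those references carry out, so there is no substantive difference in approach.
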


\subsection{Opening of Lenses}
\label{ssec:ol}

Let \( \Gamma_0 \) and \( \Gamma_\infty \) be two positively oriented Jordan curves that lie in \( D_0 \) and \( D_\infty \), respectively. Assume further that these curves are close enough to \( F \) so that \( h(z) \) is holomorphic and non-vanishing on the annular domain bounded by them. Denote by \( \Omega_0 \) and \( \Omega_\infty \) the intersection of this annular domain with \( D_0 \) and \( D_\infty \), respectively. Define
\begin{equation}
\label{X}
\boldsymbol X(z) := \boldsymbol Y(z)\left\{
\begin{array}{rl}
\left(\begin{matrix} 1 & 0 \\ -z^n/\rho(z) & 1  \end{matrix}\right), & z\in\Omega_0, \medskip \\
\left(\begin{matrix} 1 & 0 \\ z^n/\rho(z) & 1  \end{matrix}\right), & z\in\Omega_\infty, \medskip \\
\boldsymbol I, & z\in \overline\C\setminus\big(\overline\Omega_0\cup\overline\Omega_\infty\big),
\end{array}
\right.
\end{equation}
where we set \( \rho(z) := h(z)/w(z) \) in \( \Omega_0 \cup \Omega_\infty \) for \( w(z) \) given by \eqref{wa}. Then the matrix \( \boldsymbol X \) solves the following Riemann-Hilbert problem (\rhx):
\begin{itemize}
\label{rhx}
\item[(a)] $\boldsymbol X$ is analytic in $\C\setminus (\Gamma_0\cup F\cup\Gamma_\infty)$ and $\displaystyle \lim_{z\to\infty} {\boldsymbol X}(z)z^{-n\sigma_3} = \boldsymbol I$;
\item[(b)] $\boldsymbol X$ has continuous traces on $\Gamma_0\cup F^\circ\cup\Gamma_\infty$ that satisfy
\[
\boldsymbol X_+(s) =\boldsymbol X_-(s) \left\{
\begin{array}{rl}
\left(\begin{matrix} 1 & 0 \\ s^n/\rho(s) & 1  \end{matrix}\right), & s\in\Gamma_0\cup\Gamma_\infty, \medskip \\
\left(\begin{matrix} 0 & \rho(s)/s^n \\ -s^n/\rho(s) & 0 \end{matrix}\right), & s\in F^\circ;
\end{array}
\right.
\]
\item[(c)] \( \boldsymbol X \) satisfies \hyperref[rhy]{\rhy}(c).
\end{itemize}

The following lemma trivially holds.
\begin{lemma}
\label{lem:rhx}
\hyperref[rhx]{\rhx} is solvable if and only if \hyperref[rhy]{\rhy} is solvable. When solutions of \hyperref[rhx]{\rhx} and \hyperref[rhy]{\rhy} exist, they are unique and connected by \eqref{X}.
\end{lemma}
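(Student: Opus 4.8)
The plan is to treat \eqref{X} as an explicit, pointwise invertible change of the unknown matrix, so that it becomes a bijection between the solution sets of the two problems. First I would record that in each lens region the conjugating factor
\[
\boldsymbol G(z) := \left(\begin{matrix} 1 & 0 \\ \pm z^n/\rho(z) & 1 \end{matrix}\right)
\]
(with the sign chosen as in \eqref{X}, i.e.\ \( - \) in \( \Omega_0 \) and \( + \) in \( \Omega_\infty \)) is holomorphic and non-vanishing there, because \( \rho=h/w \) is holomorphic and non-vanishing on the annulus bounded by \( \Gamma_0 \) and \( \Gamma_\infty \) by the very choice of these curves. Since \( \det\boldsymbol G\equiv1 \) and \( \boldsymbol G^{-1} \) is obtained simply by flipping the sign of the lower-left entry, the matrices \( \boldsymbol G^{\pm1} \) lie in the same class; thus \eqref{X} reads \( \boldsymbol X=\boldsymbol Y\boldsymbol G \) on \( \overline\Omega_0\cup\overline\Omega_\infty \) and \( \boldsymbol X=\boldsymbol Y \) elsewhere, and is undone by \( \boldsymbol Y=\boldsymbol X\boldsymbol G^{-1} \).

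The forward implication is already contained in the computation preceding the statement: if \hyperref[rhy]{\rhy} has a solution \( \boldsymbol Y \), then the matrix \( \boldsymbol X \) given by \eqref{X} fulfills \hyperref[rhx]{\rhx}(a)--(c). For the converse I would simply run that computation in reverse, starting from a solution \( \boldsymbol X \) of \hyperref[rhx]{\rhx} and setting \( \boldsymbol Y:=\boldsymbol X\boldsymbol G^{-1} \). Because \( \boldsymbol G\equiv\boldsymbol I \) near infinity, the normalization \hyperref[rhy]{\rhy}(a) is inherited verbatim; the conjugation turns the off-diagonal jump on \( F^\circ \) back into the triangular jump of \hyperref[rhy]{\rhy}(b) and cancels the jumps manufactured on \( \Gamma_0,\Gamma_\infty \); and \hyperref[rhy]{\rhy}(c) is preserved since the lower-left entry \( \pm z^n/\rho(z)=\pm z^nw(z)/h(z) \) vanishes like \( |z-e|^{1/2} \) as \( z\to e\in\{a,a^{-1}\} \), exactly offsetting the \( |z-e|^{-1/2} \) growth of the second column so that the first column of \( \boldsymbol Y \) stays bounded.

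Finally, uniqueness transfers from Lemma~\ref{lem:rhy}: the map \( \boldsymbol Y\mapsto\boldsymbol Y\boldsymbol G \) is a bijection from solutions of \hyperref[rhy]{\rhy} onto solutions of \hyperref[rhx]{\rhx}, so the at-most-one-solution property of the former passes to the latter, and any two corresponding solutions are linked by \eqref{X}. The argument is essentially bookkeeping of the triangular factors; the only step deserving a second glance --- and the nearest thing to an obstacle --- is the local analysis at \( a,a^{-1} \), where one must confirm that the conjugating entry vanishes rather than blows up, so that the \( \mathcal O \)-bounds of \hyperref[rhy]{\rhy}(c) survive the transformation. Once that sign is pinned down the equivalence is immediate.
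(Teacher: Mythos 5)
Your proposal is correct and takes essentially the same route as the paper, which states the lemma without proof (``trivially holds'') precisely because \eqref{X} is an explicitly invertible unipotent transformation: the forward verification is carried out in the text preceding the lemma, the converse is the reverse computation you describe, and uniqueness transfers through the bijection from Lemma~\ref{lem:rhy}. Your one added check --- that \( \pm z^n/\rho=\pm z^nw/h \) vanishes like \( |z-e|^{1/2} \) at \( e\in\{a,a^{-1}\} \), so that condition (c) survives the transformation in both directions --- is exactly the right detail to pin down; just note that \( \rho=h/w \) is holomorphic only on the lens regions \( \Omega_0\cup\Omega_\infty \) rather than on the full annulus, since \( w \) jumps across \( [a^{-1},a] \).
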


\subsection{Model RH Problem}
\label{ssec:mrh}

Consider the following Riemann-Hilbert problem (\rhn):
\begin{itemize}
\label{rhn}
\item[(a)] $\boldsymbol N$ is analytic in $\C\setminus F$ and $\displaystyle \lim_{z\to\infty} {\boldsymbol N}(z)z^{-n\sigma_3} = \boldsymbol I$;
\item[(b)] $\boldsymbol N$ has continuous traces on $F^\circ$ that satisfy 
\[ \boldsymbol N_+(s) =\boldsymbol N_-(s) \left(\begin{matrix} 0 & \rho(s)/s^n \\ -s^n/\rho(s) & 0 \end{matrix}\right).
\]
\end{itemize}

To solve \hyperref[rhn]{\rhn}, recall the definition of \( \Qn,\Rn \) in \eqref{calQn-real} and \eqref{calRn-real}. Since \( D_+=D_-h \) on \( \T \), \( S_+S_-=D^2/h \) on \( F_a \), and \( S_+S_-=D^2h \) on \( F_{a^{-1}} \), it can be easily checked that
\begin{equation}
\label{Psin-jumps}
\mathcal Q_{n\pm}(s) = \big[s^n/h(s)\big] \mathcal R_{n\mp}(s)\left\{
\begin{array}{rl}
1, & s\in F_a^\circ, \medskip \\
-1, & s\in F_{a^{-1}}^\circ, \medskip \\
\mp1, & s\in F_{-b}^\circ,
\end{array}
\right.
\end{equation}
where we also used \eqref{equilibrium}. Further, set
\begin{equation}
\label{phi}
\phi(z) := \frac{z-b-w(z)}{\sqrt a-\sqrt{a^{-1}}},
\end{equation}
where the convention concerning the roots of negative numbers is the same as in \eqref{varphi}. Similarly to \( \varphi(z) \), one can see that \( \phi(z) \) is holomorphic off \( [a^{-1},a] \), has a simple zero at the origin, and satisfies \( \phi_-(s)\phi_+(s)=s \) for \( s\in [a^{-1},a] \). Further, with \eqref{calQn-real} and \eqref{calRn-real} at hand, let us put
\begin{equation}
\label{calQn-star-real}
\mathcal Q_{n-1}^\star(z) := \mathcal Q_{n-1}(z)\left\{
\begin{array}{rl}
z/\phi(z), & z\in D_0, \medskip \\ 
\phi(z), & z\in D_\infty,
\end{array}
\right.
\end{equation}
which is a holomorphic and non-vanishing function in \( \C\setminus F \) with a pole of order \( n-1 \) at infinity and
\begin{equation}
\label{calRn-star-real}
\mathcal R_{n-1}^\star(z) := \mathcal R_{n-1}(z)\left\{
\begin{array}{rl}
\phi(z)/z, & z\in D_0, \medskip \\ 
1/\phi(z), & z\in D_\infty,
\end{array}
\right.
\end{equation}
which is a holomorphic and non-vanishing function in \( \C\setminus F \) with a zero of multiplicity \( n-1 \) at infinity. Clearly, \( \mathcal Q_{n-1}^\star \) and \( \mathcal R_{n-1}^\star \) also satisfy \eqref{Psin-jumps}. Then it can be readily checked that
\begin{equation}
\label{N}
\boldsymbol N := \boldsymbol{CM}, \quad \boldsymbol C:= \left(\begin{matrix} \gamma_n & 0 \medskip \\ 0 & \gamma_{n-1}^\star  \end{matrix}\right), \quad \boldsymbol M := \left( \begin{matrix} \Qn & \Rn/w \medskip \\ \mathcal Q_{n-1}^\star & \mathcal R_{n-1}^\star/w \end{matrix} \right),
\end{equation}
solves  \hyperref[rhn]{\rhn}, where \( \gamma_{n-1}^\star \) is a constant such that \( \lim_{z\to\infty}\gamma_{n-1}^\star z^{n-1}\mathcal R_{n-1}^\star(z) = 1 \). Observe that \( \boldsymbol M \) satisfies \hyperref[rhy]{\rhy}(c) and \( \det(\boldsymbol N)\equiv1 \) because \( \det(\boldsymbol N)(z) \) is a holomorphic function outside \( \big\{a,a^{-1}\big\} \), where it has at most square root singularities, and that has value \( 1 \) at infinity. It also holds that
\begin{equation}
\label{detM-real}
\det(\boldsymbol M) = \big(\gamma_n\gamma_{n-1}^\star\big)^{-1} = -\frac4{2b+a+a^{-1}}.
\end{equation}

\subsection{RH Problem with Small Jumps}
\label{ssec:sj}

Consider the following Riemann-Hilbert problem (\rhz):
\begin{itemize}
\label{rhz}
\item[(a)] $\boldsymbol{Z}$ is a holomorphic matrix function in $\overline\C\setminus(\Gamma_0\cup \Gamma_\infty)$ and $\boldsymbol{Z}(\infty)=\boldsymbol{I}$;
\item[(b)] $\boldsymbol{Z}$ has continuous traces on $\Gamma_0\cup \Gamma_\infty$ that satisfy
\[
\boldsymbol Z_+(s)  = \boldsymbol Z_- (s)\boldsymbol M(s) \left(\begin{matrix} 1 & 0 \\ s^n/\rho(s) & 1 \end{matrix}\right) \boldsymbol M^{-1}(s).
\]
\end{itemize}
Then the following lemma takes place.
\begin{lemma}
\label{lem:rhz1}
For \( n \) large enough, a solution of \hyperref[rhz]{\rhz} exists and satisfies
\begin{equation}
\label{Z}
\boldsymbol{Z}=\boldsymbol{I}+ \mathcal O\big( c^n\big)
\end{equation}
for some constant $c<1$ independent of $\Gamma_0,\Gamma_\infty$, where $\mathcal O(\cdot)$ holds uniformly in $\overline\C$.
\end{lemma}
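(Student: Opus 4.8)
The statement is an instance of the standard ``small-norm'' theory for Riemann--Hilbert problems, so the plan is to show that the jump matrix of \hyperref[rhz]{\rhz} differs from \( \boldsymbol I \) by an exponentially small amount on \( \Gamma_0\cup\Gamma_\infty \) and then invoke the solvability theory for RH problems whose jump is close to the identity. Denote by \( \boldsymbol J \) the jump matrix of \hyperref[rhz]{\rhz}(b), that is, \( \boldsymbol J:=\boldsymbol M(\boldsymbol I+\boldsymbol L)\boldsymbol M^{-1} \) with \( \boldsymbol L:=\left(\begin{smallmatrix}0&0\\ s^n/\rho&0\end{smallmatrix}\right) \).

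First I would compute \( \boldsymbol J-\boldsymbol I \) explicitly. Since \( \det\boldsymbol M \) is the nonzero constant of \eqref{detM-real} and \( \boldsymbol M^{-1}=(\det\boldsymbol M)^{-1}\left(\begin{smallmatrix}\mathcal R_{n-1}^\star/w & -\Rn/w\\ -\mathcal Q_{n-1}^\star & \Qn\end{smallmatrix}\right) \), a direct multiplication collapses everything onto the second column of \( \boldsymbol M \):
\[
\boldsymbol J-\boldsymbol I=\frac{s^n}{\rho\,w^2\det\boldsymbol M}\left(\begin{matrix}\Rn\mathcal R_{n-1}^\star & -\Rn^2\\ (\mathcal R_{n-1}^\star)^2 & -\Rn\mathcal R_{n-1}^\star\end{matrix}\right).
\]
Thus the matter reduces to estimating \( s^n\Rn^2 \), \( s^n\Rn\mathcal R_{n-1}^\star \), and \( s^n(\mathcal R_{n-1}^\star)^2 \) on the two curves, the remaining scalar factor \( 1/(\rho\,w^2\det\boldsymbol M)=1/(h\,w\det\boldsymbol M) \) (using \( \rho=h/w \) on \( \Gamma_0\cup\Gamma_\infty \)) being bounded and bounded away from zero there.

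The key step is the geometric decay of these entries. On \( \Gamma_0\subset D_0 \) I would insert \( \Rn=(\varphi/z)^nD/S \) from \eqref{calRn-real} and \( \mathcal R_{n-1}^\star=(\varphi/z)^{n-1}(\phi/z)D/S \) from \eqref{calRn-star-real} and cancel the explicit powers of \( z=s \); each entry then becomes \( (\varphi^2/z)^{n-1} \) (or \( (\varphi^2/z)^n \)) times a product of \( \phi,D,S,w,h \) that is holomorphic, bounded, and non-vanishing on the fixed curve \( \Gamma_0 \). The analogous substitution of the \( D_\infty \)-branches turns the \( \Gamma_\infty \) entries into bounded multiples of \( (z/\varphi^2)^{n-1} \). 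By \eqref{equilibrium} and the maximum principle one has \( |\varphi^2/z|<1 \) throughout \( D_0 \) and \( |\varphi^2/z|>1 \) throughout \( D_\infty \) (this is the geometric smallness recorded below \eqref{error}), so with
\[
c:=\max\Big\{\sup_{\Gamma_0}\big|\varphi^2/z\big|,\ \sup_{\Gamma_\infty}\big|z/\varphi^2\big|\Big\}<1
\]
I obtain \( \|\boldsymbol J-\boldsymbol I\|_{L^\infty(\Gamma_0\cup\Gamma_\infty)}=\mathcal O(c^n) \); since admissible curves lie in a fixed neighborhood of \( F \) on which \( h \) is holomorphic and non-vanishing, \( c \) may be kept below \( 1 \) uniformly in the choice of \( \Gamma_0,\Gamma_\infty \).

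It then remains to run the standard argument. The jump satisfies \( \det\boldsymbol J=\det(\boldsymbol I+\boldsymbol L)=1 \), the contour \( \Gamma_0\cup\Gamma_\infty \) is a finite union of smooth Jordan curves not passing through \( \infty \), and \( \|\boldsymbol J-\boldsymbol I\|_{L^2\cap L^\infty}=\mathcal O(c^n)\to0 \); hence for all \( n \) large the Cauchy singular integral operator \( \boldsymbol I-C_{\boldsymbol J-\boldsymbol I} \) on \( L^2(\Gamma_0\cup\Gamma_\infty) \) is invertible by a Neumann series, so a unique solution \( \boldsymbol Z \) exists and its Cauchy-transform representation yields \( \boldsymbol Z=\boldsymbol I+\mathcal O(c^n) \) uniformly in \( \overline\C \), see \cite{DZ93}. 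The only genuine work is the geometric-decay estimate of the preceding paragraph: after the cancellation of the \( z \)-powers one must check that every surviving factor is uniformly bounded and bounded away from zero, so that the decay rate is governed solely by \( |\varphi^2/z| \); the inversion step is then routine.
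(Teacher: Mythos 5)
Your proposal is correct and essentially coincides with the paper's own proof: the paper likewise computes the jump matrix explicitly, arriving at \eqref{Zjump} --- the same matrix as yours, since \( (\det\boldsymbol M)^{-1}=\gamma_n\gamma_{n-1}^\star \) by \eqref{detM-real} and \( \rho w^2 = hw \) on the contours --- deduces its geometric smallness from \eqref{equilibrium} together with the maximum modulus principle, and concludes by the standard small-norm theory (citing \cite[Corollary~7.108]{Deift} rather than \cite{DZ93}). The only caveat is your final claim that \( c \) can be kept below \( 1 \) uniformly over all admissible \( \Gamma_0,\Gamma_\infty \), which is not justified as stated (your \( c \) tends to \( 1 \) as the curves approach \( F \), and admissible curves may lie arbitrarily close to \( F \)); however, the paper's own proof is no more precise on this point, as it writes the curve-dependent rate \( c(\Gamma_0,\Gamma_\infty) \) while the lemma asserts independence.
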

\begin{proof}
It follows from an explicit computation and \eqref{detM-real} that the jump matrix for $\boldsymbol{Z}$ is equal to
\begin{equation}
\label{Zjump}
\boldsymbol I + \gamma_n\gamma_{n-1}^\star\frac{s^n}{(hw)(s)} \left(\begin{matrix} (\Rn\mathcal R_{n-1}^\star)(s) & -\Rn^2(s) \medskip \\ \mathcal R_{n-1}^{\star2}(s) &  -(\Rn\mathcal R_{n-1}^\star)(s) \end{matrix}\right) = \boldsymbol I  + \mathcal O\big( c^n(\Gamma_0,\Gamma_\infty) \big),
\end{equation}
where \( c(\Gamma_0,\Gamma_\infty)\in (0,1) \) and the last equality is a consequence of \eqref{equilibrium} and the maximum modulus principle for holomorphic  functions.The conclusion of the lemma now follows from the same argument as in \cite[Corollary~7.108]{Deift}.
\end{proof}

\subsection{Asymptotics}

Let $\boldsymbol Z$ be a solution of \hyperref[rhz]{\rhz} granted by Lemma~\ref{lem:rhz1} and $\boldsymbol C,\boldsymbol M$ be defined by \eqref{N}. Then it can be easily checked that \( \boldsymbol X := \boldsymbol{CZM} \) solves \hyperref[rhx]{\rhx} and therefore the solution of \hyperref[rhy]{\rhy} is obtained from \eqref{X}.

Given any closed set $B\subset\overline\C\setminus F$, choose $\Omega_0,\Omega_\infty$ so that $B\subset \overline\C \setminus (\overline\Omega_0 \cup \overline\Omega_\infty)$. Then \( \boldsymbol Y = \boldsymbol{CZM} \) on \( B \). Hence, if the first row of \( \boldsymbol Z \) is denoted by \( \left(\begin{matrix} 1+\upsilon_{n1} & \upsilon_{n2} \end{matrix}\right) \), we have that
\[
\left\{
\begin{array}{rll}
Q_n &=& \gamma_n\left[\big(1+\upsilon_{n1}\big)\Qn + \upsilon_{n2}\mathcal Q_{n-1}^\star\right], \medskip \\
wR_n & = & \gamma_n\left[\big(1+\upsilon_{n1}\big)\Rn + \upsilon_{n2}\mathcal R_{n-1}^\star\right],
\end{array}
\right.
\]
by  \eqref{Y} and \eqref{N}. Equations \eqref{asymptotics1} now follow from \eqref{calQn-real} and \eqref{calQn-star-real} together with \eqref{calRn-real} and \eqref{calRn-star-real} since we know from Lemma~\ref{lem:rhz1} that $|\upsilon_{nk}|\leq c^n$ uniformly in $\overline\C$ ($\upsilon_{nk}(\infty)=0$ as $\boldsymbol{Z}(\infty)=\boldsymbol{I}$).

\section{Proof of Theorem~\ref{thm:main2} when \( \rho\in\mathcal W_2 \)}

It is straightforward to check that everything written in Sections~\ref{ssec:ortho}-\ref{ssec:ol} remains valid except for \hyperref[rhy]{\rhy}(c)  which now simply reads 
\[
\boldsymbol Y(z) = \mathcal O\left(\begin{matrix} 1 & |z-e|^{-1/2} \\ 1 & |z-e|^{-1/2} \end{matrix}\right) \quad \text{as} \quad z\to e\in E=\big\{a,b,a^{-1},b^{-1}\big\}.
\]
Furthermore, the formulation of \hyperref[rhn]{\rhn} remains the same as well. To solve it, observe that \eqref{Psin-jumps} still holds (one needs to replace \( F_{-b}^\circ \) with \( F_{-1}^\circ\cup F_1^\circ \)), where the functions \( \Qn, \Rn \) are now defined by \eqref{calQn} and \eqref{calRn}. Indeed, for \( s\in F_a^\circ \), it holds that
\[
\mathcal Q_{n\pm}(s) = \Psi_{n+}(\s) = h^{-1}(s)\Psi_{n-}(\s) = \big[s^n/h(s)\big]\mathcal R_{n\mp}(s)
\]
as claimed. The proof of \eqref{Psin-jumps} on the rest of the arcs is absolutely analogous (one just needs to pay attention to the chosen orientations of the cycles \( \boldsymbol\alpha,\boldsymbol\beta,\boldsymbol\gamma,\boldsymbol\delta \)).  Moreover, the functions \( \mathcal Q_{n-1}^\star \) and \( \mathcal R_{n-1}^\star \), defined just before Theorem~\ref{thm:main2}, satisfy \eqref{Psin-jumps} as well, which can be shown in a similar fashion since \( \Psi_{n-1}^\star \) obviously satisfies~\eqref{Psin-jump}. Hence, it is easy to check using \eqref{Psin-jumps} that \hyperref[rhn]{\rhn} is solved  for each \( n \) such that \( \z_n\neq\infty^{(1)} \) by \eqref{N}, where the constants \( \gamma_n \) and \( \gamma_{n-1}^\star \) are again defined by
\[
\lim_{z\to\infty}\gamma_n\Qn(z)z^{-n} = 1 \qandq \lim_{z\to\infty} \gamma_{n-1}^\star\mathcal R_{n-1}^\star(z)z^{n-2} = 1.
\]
These constants are well defined by the very definition of \( \N_\varepsilon \) and Lemma~\ref{lem:JIP}. Notice again that \( \boldsymbol M \) has the same behavior near \( E \) as \( \boldsymbol Y\). Moreover, \( \det(\boldsymbol N)\equiv1 \) due to the same reasons as before, and therefore \( \det(\boldsymbol M) = (\gamma_n\gamma_{n-1}^\star)^{-1} \). Given the solution of \hyperref[rhn]{\rhn},  we again can formulate \hyperref[rhz]{\rhz}. Obviously, the jump of \( \boldsymbol Z \) is equal to the left-hand side of \eqref{Zjump}. Since 
\[
\gamma_n\gamma_{n-1}^\star = \lim_{z\to\infty}\frac{z^2}{\Qn(z)\mathcal R_{n-1}^\star(z)} = \lim_{\z\to\infty^{(1)}} \frac{z^3A_{m_{n-1}^\star-m_n-\tau}(\z^*)}{\Phi(\z)\Theta_n(\z)\Theta\big(\z^*;0^{(1)}\big)\Theta\big(\z^*;\z_{n-1}^\star\big)},
\]
it follows from the very definition of \( \N_\varepsilon \), Lemma~\ref{lem:JIP}, and the compactness argument from the proof of Theorem~\ref{thm:NS2} that the sequence \( \{|\gamma_n\gamma_{n-1}^\star|\}_{n\in\N_\varepsilon} \) is bounded above (the constant does depend on \( \varepsilon \)). Therefore, the conclusion of Lemma~\ref{lem:rhz1} still holds, but only for all \( n\in\N_\varepsilon \) large enough and with constant \( c=c_\varepsilon \), where we need to use \eqref{calRn-bound} and \eqref{fun-g} coupled with the maximum principle for harmonic functions to show the equality in \eqref{Zjump}. Finally, the proof of \eqref{asymptotics2} is now absolutely the same as in the case of Theorem~\ref{thm:main1}.

\section{Proof of Theorem~\ref{thm:main1} when \( \rho\in\mathcal W_1 \)}

\subsection{Initial RH Problem}

The material of Section~\ref{ssec:ortho} remains valid. The only change in Section~\ref{ssec:irh} needs to be made in \hyperref[rhy]{\rhy}(c) that is replaced by
\begin{equation}
\label{Yc}
\boldsymbol Y(z) = \left\{ \begin{array}{rl}
\mathcal O \left(\begin{matrix} 1 & 1 \\ 1 & 1 \end{matrix}\right) & \text{as} \quad z\to\big\{b,b^{-1}\big\}, \medskip \\
\mathcal O \left(\begin{matrix} 1 & \psi_\alpha(z-a) \\ 1 & \psi_\alpha(z-a) \end{matrix}\right) & \text{as} \quad z\to a, \medskip \\
\mathcal O \left(\begin{matrix} 1 & \psi_\beta(z-a^{-1}) \\ 1 & \psi_\beta(z-a^{-1}) \end{matrix}\right) & \text{as} \quad z\to a^{-1},
\end{array}
\right.
\end{equation}
 where
\[
\psi_\alpha(z) :=
\left\{
\begin{array}{ll}
|z|^\alpha, & \mbox{if} \quad \alpha<0, \smallskip \\
\log|z|, & \mbox{if} \quad \alpha=0,\smallskip \\
1, & \mbox{if} \quad \alpha>0.
\end{array}
\right.
\]

\subsection{Opening of Lenses}

Here, we choose \( \Gamma_0,\Gamma_\infty \) as in Section~\ref{ssec:ol} with the exception of requiring \( \Gamma_0 \) to touch \( F \) at \( a \) and \( \Gamma_\infty \) to touch \( F \) at \( a^{-1} \). We define \( \Omega_0,\Omega_\infty \) again as in Section~\ref{ssec:ol}, however, now they are no longer annular domains. Further, we still define \( \boldsymbol X \) by \eqref{X} with \( \rho(s) \) extended to \( \Omega_0\cup\Omega_\infty \) by \( h(z)/w(z) \) (we assume that the branch cuts of \( (z-a)^{\alpha+1/2} \) and \( (z-a^{-1})^{\beta+1/2} \) in \eqref{W1} lie outside of some neighborhoods of \( a \) and \( a^{-1} \) intersected with \(\overline\Omega_0\cup\overline\Omega_\infty\)). The Riemann-Hilbert problem \hyperref[rhx]{\rhx} remains the same except for \hyperref[rhx]{\rhx}(c), which needs to be modified within \( \Omega_0\cup\Omega_\infty \) as follows:
\begin{equation}
\label{Xc}
\boldsymbol X(z) = \left\{
\begin{array}{ll}
\mathcal O\left(\begin{matrix} 1 & |z-a|^\alpha \\ 1 & |z-a|^\alpha \end{matrix}\right), & \alpha<0, \medskip \\
\mathcal O\left(\begin{matrix} \log|z-a| & \log|z-a| \\ \log|z-a| & \log|z-a| \end{matrix}\right), & \alpha=0, \medskip \\
\mathcal O\left(\begin{matrix} |z-a|^{-\alpha} & 1 \\ |z-a|^{-\alpha} & 1 \end{matrix}\right), & \alpha>0,
\end{array}
\right.
\end{equation}
as \( \Omega_0\cup\Omega_\infty\ni z\to a\), and an analogous change should be made around \( a^{-1} \).  With the above changes, Lemma~\ref{lem:rhx} still holds.

\subsection{Model and Local RH Problems}
\label{ssec:lrh}

Model Riemann-Hilbert problem \hyperref[rhn]{\rhn} is formulated and solved exactly as in the case \( \rho\in\mathcal W_2 \).  Moreover,  it is still true that \( \det(\boldsymbol N)\equiv 1 \) (the singular behavior of the entries of \( \boldsymbol N \) around \( a,a^{-1} \) gets canceled when determinant is evaluated). Let now \( U_a,U_{a^{-1}} \) be open sets around \( a,a^{-1} \).  Define
\begin{equation}
\label{D1}
\boldsymbol D(z) := \left\{
\begin{array}{ll}
(z/\varphi(z))^{n\sigma_3}, & z\in D_0, \medskip \\
\varphi(z)^{n\sigma_3}, & z\in D_\infty,
\end{array}
\right.
\end{equation}
where \( \varphi \) is given by \eqref{varphi} and \( g^{\sigma_3} = \mathrm{diag}\big( g \ 1/g \big) \). We shall need to solve the following local Riemann-Hilbert problems (\rhp, \( e\in\big\{a,a^{-1}\big\} \)):
\begin{itemize}
\label{rhp}
\item[(a,b,c)] $\boldsymbol P_e$ satisfies \hyperref[rhx]{\rhx}(a,b,c) within \( U_e\);
\item[(d)] \( \boldsymbol P_e = \boldsymbol{MD}^{-1}\big(\boldsymbol I+\mathcal O(1/n) \big)\boldsymbol D \) uniformly on \( \partial U_e \).
\end{itemize}
Since the construction of \( \boldsymbol P_e \) is lengthy, we postpone it until the end of the section.

\subsection{RH Problem with Small Jumps} 

Let
\[
\Sigma = \big( \partial U_a \cup \partial U_{a^{-1}}\big) \cup \left[\big( \Gamma_0\cup \Gamma_\infty)\setminus \big( \overline U_a \cup \overline U_{a^{-1}}\big) \right].
\]
The Riemann-Hilbert problem \hyperref[rhz1]{\rhz} now needs to be formulated as follows:
\begin{itemize}
\label{rhz1}
\item[(a)] $\boldsymbol{Z}$ is a holomorphic matrix function in $\overline\C\setminus\Sigma$ and $\boldsymbol{Z}(\infty)=\boldsymbol{I}$;
\item[(b)] $\boldsymbol{Z}$ has continuous traces at the smooth points of $ \Sigma $ that satisfy
\[
\boldsymbol Z_+(s)  = \boldsymbol Z_- (s)\left\{
\begin{array}{l}
\boldsymbol M(s) \left(\begin{matrix} 1 & 0 \\ s^n/\rho(s) & 1 \end{matrix}\right) \boldsymbol M^{-1}(s), \medskip \\
\big(\boldsymbol P_e \boldsymbol M^{-1}\big)(s),
\end{array}
\right.
\]
where the first relation holds for \( s\in \big(\Gamma_0\cup \Gamma_\infty\big)\setminus \big( \overline U_a \cup \overline U_{a^{-1}}\big) \) and the second one for \(  s\in \partial U_e\setminus \big(\Gamma_0\cup F\cup \Gamma_\infty\big) \), \( e\in\big\{a,a^{-1}\big\} \).
\end{itemize}

Then the following lemma takes place.
\begin{lemma}
\label{lem:rhz1}
For all \( n \) large enough, a solution of \hyperref[rhz1]{\rhz} exists and satisfies
\(
\boldsymbol{Z}=\boldsymbol{I}+ \mathcal O\big( 1/n\big)
\)
uniformly in~$\overline\C$.
\end{lemma}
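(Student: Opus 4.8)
The plan is to recast \hyperref[rhz1]{\rhz} as a small-norm Riemann--Hilbert problem and then invoke the standard theory. Write the jump matrix of \( \boldsymbol Z \) as \( \boldsymbol I + \Delta_n \). I would estimate \( \Delta_n \) separately on the two kinds of arcs comprising \( \Sigma \). On the lens parts \( (\Gamma_0\cup\Gamma_\infty)\setminus(\overline U_a\cup\overline U_{a^{-1}}) \) the jump is exactly the one already displayed in \eqref{Zjump}. Since the tangency points \( a,a^{-1} \) have been excised into the disks \( U_e \), these arcs now stay at a positive distance from \( F \), so the equilibrium relations \eqref{equilibrium} together with the maximum modulus principle give \( \Delta_n = \mathcal O(c^n) \) there for some \( c<1 \), verbatim as in the \( \mathcal W_2 \) proof.

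The new feature is the contribution of the circles \( \partial U_e\setminus(\Gamma_0\cup F\cup\Gamma_\infty) \), where the jump equals \( (\boldsymbol P_e\boldsymbol M^{-1})(s) \). Here I would use the matching condition \hyperref[rhp]{\rhp}(d), which gives \( \boldsymbol P_e = \boldsymbol M\boldsymbol D^{-1}\big(\boldsymbol I+\mathcal O(1/n)\big)\boldsymbol D \) on \( \partial U_e \), and hence
\[
\boldsymbol P_e\boldsymbol M^{-1} = \big(\boldsymbol M\boldsymbol D^{-1}\big)\big(\boldsymbol I+\mathcal O(1/n)\big)\big(\boldsymbol M\boldsymbol D^{-1}\big)^{-1}.
\]
The key point, and the reason \hyperref[rhp]{\rhp}(d) is stated in this grouped form, is that the jump is a \emph{conjugation} by \( \boldsymbol M\boldsymbol D^{-1} \) rather than by \( \boldsymbol D \) alone. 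By \eqref{calQn-real}--\eqref{calRn-real}, \eqref{calQn-star-real}--\eqref{calRn-star-real}, and the definition \eqref{D1}, the growing and decaying factors \( (z/\varphi)^{\pm n},\varphi^{\pm n} \) cancel entrywise in \( \boldsymbol M\boldsymbol D^{-1} \), so that this matrix is \emph{independent of} \( n \). Its entries are holomorphic and bounded on the fixed circle \( \partial U_e \) (the singularities of \( S,D,1/w \) sit at the center \( e \)), and \( \det(\boldsymbol M\boldsymbol D^{-1}) = \det\boldsymbol M \) is a nonzero constant by \eqref{detM-real}. Consequently \( \boldsymbol M\boldsymbol D^{-1} \) and its inverse are uniformly bounded on \( \partial U_e \), conjugation preserves the order of smallness, and \( \Delta_n = \mathcal O(1/n) \) on the circles.

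To finish, since \( \Sigma \) is a fixed bounded contour the \( L^2(\Sigma)\cap L^\infty(\Sigma) \) norms are equivalent up to the (finite) length, whence \( \|\Delta_n\|_{L^2\cap L^\infty} = \mathcal O(1/n) \), the dominant contribution coming from the parametrix boundaries. The small-norm theory then produces, for all \( n \) large enough, a unique solution satisfying \( \boldsymbol Z = \boldsymbol I + \mathcal O(1/n) \) uniformly in \( \overline\C \), by the same argument as in \cite[Corollary~7.108]{Deift}. I expect the genuinely delicate step, and the reason the rate degrades from \( c^n \) (as in \( \mathcal W_2 \)) to \( 1/n \), to be the control of the conjugated jump on \( \partial U_e \): one must ensure that the \( n \)-independent prefactor \( \boldsymbol M\boldsymbol D^{-1} \) is uniformly invertible on \( \partial U_e \) despite the endpoint singularities carried by \( \boldsymbol M \) near \( a,a^{-1} \), which is exactly what constrains the placement and size of \( U_e \) and relies on the (still to be constructed) local parametrices fulfilling \hyperref[rhp]{\rhp}(d).
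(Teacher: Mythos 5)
Your proof is correct and follows essentially the same route as the paper: geometric smallness of the jump on \( (\Gamma_0\cup\Gamma_\infty)\setminus(\overline U_a\cup\overline U_{a^{-1}}) \) exactly as in the \( \mathcal W_2 \) case, the observation that \( \boldsymbol{MD}^{-1} \) is a fixed, \( n \)-independent matrix with constant nonzero determinant so that conjugation preserves the \( \mathcal O(1/n) \) matching error on \( \partial U_e \), and the small-norm argument of \cite[Corollary~7.108]{Deift}. The only slight imprecision is calling the entries of \( \boldsymbol{MD}^{-1} \) holomorphic on \( \partial U_e \) --- they have (bounded) jumps where the circle crosses \( F \) --- but since only uniform boundedness of \( \boldsymbol{MD}^{-1} \) and its inverse is used, this does not affect the argument.
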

\begin{proof}
The proof of the fact that the jump of \( \boldsymbol Z \) is geometrically small on \( \big(\Gamma_0\cup \Gamma_\infty\big)\setminus \big( \overline U_a \cup \overline U_{a^{-1}}\big) \) is the same as in the case \( \rho\in\mathcal W_2 \). Furthermore, we have that
\[
\boldsymbol P_e \boldsymbol M^{-1} = \boldsymbol I + \boldsymbol{MD}^{-1}\mathcal O(1/n)\boldsymbol{DM}^{-1}
\]
on \( \partial U_e \). It follows from \eqref{calQn-real}, \eqref{calRn-real}, \eqref{calQn-star-real}, \eqref{calRn-star-real}, and \eqref{N} that
\[
\boldsymbol{MD}^{-1} = \left( \begin{matrix} 1 & 1 \medskip \\ \varphi/\phi & \phi/\varphi \end{matrix}\right)\left(\frac SD \right)^{\sigma_3}
\]
on \( \partial U_a \) and a similar formula holds on \(\partial U_{a^{-1}} \). In any case it is a fixed matrix independent of \( n \). Hence, the jump of \( \boldsymbol Z \) is of order  \( \boldsymbol I + \mathcal O(1/n) \) on \( \partial U_a \cup \partial U_{a^{-1}} \). The conclusion of the lemma now follows as in the case \( \rho\in\mathcal W_2 \).
\end{proof}

\subsection{Asymptotics} 

Formulae \eqref{asymptotics1} follow now exactly as in the case \( \rho\in\mathcal W_2 \).

\subsection{Solution of \hyperref[rhp]{\rhp}, \( e\in\big\{a,a^{-1}\big\} \)}
\label{ssec:hard}

We shall construct the matrix \( \boldsymbol P_a \) only as the construction of \(  \boldsymbol P_{a^{-1}} \) is completely similar.

\subsubsection{Model Problem}

Below, we always assume that the real line as well as its subintervals are oriented from left to right. Further, we set
\begin{equation}
\label{rays}
I_\pm:=\big\{z:\arg(z)=\pm2\pi/3\big\},
\end{equation}
where the rays $I_\pm$ are oriented towards the origin. Given $\alpha>-1$, let $\boldsymbol\Psi_\alpha$ be a matrix-valued function such that 
\begin{itemize}
\label{rhpsiA}
\item[(a)] $\boldsymbol\Psi_\alpha$ is holomorphic in $\C\setminus\big(I_+\cup I_-\cup(-\infty,0]\big)$;
\item[(b)] $\boldsymbol\Psi_\alpha$ has continuous traces on $I_+\cup I_-\cup(-\infty,0)$ that satisfy
\[
\boldsymbol\Psi_{\alpha+} = \boldsymbol\Psi_{\alpha-}
\left\{
\begin{array}{rll}
\left(\begin{matrix} 0 & 1 \\ -1 & 0 \end{matrix}\right) & \text{on} & (-\infty,0), \medskip \\
\left(\begin{matrix} 1 & 0 \\ e^{\pm\pi\mathrm{i}\alpha} & 1 \end{matrix}\right) & \text{on} & I_\pm;
\end{array}
\right.
\]
\item[(c)] as $\zeta\to0$  it holds that
\[
\boldsymbol\Psi_\alpha(\zeta) = \mathcal{O}\left( \begin{matrix} |\zeta|^{\alpha/2} & |\zeta|^{\alpha/2} \\ |\zeta|^{\alpha/2} & |\zeta|^{\alpha/2} \end{matrix} \right) \quad \text{and} \quad \boldsymbol\Psi_\alpha(\zeta) = \mathcal{O}\left( \begin{matrix} \log|\zeta| & \log|\zeta| \\ \log|\zeta| & \log|\zeta| \end{matrix} \right) 
\]
when $\alpha<0$ and $\alpha=0$, respectively, and
\[
\boldsymbol\Psi_\alpha(\zeta) = \mathcal{O}\left( \begin{matrix} |\zeta|^{\alpha/2} & |\zeta|^{-\alpha/2} \\ |\zeta|^{\alpha/2} & |\zeta|^{-\alpha/2} \end{matrix} \right) \quad \text{and} \quad \boldsymbol\Psi_\alpha(\zeta) = \mathcal{O}\left( \begin{matrix} |\zeta|^{-\alpha/2} & |\zeta|^{-\alpha/2} \\ |\zeta|^{-\alpha/2} & |\zeta|^{-\alpha/2} \end{matrix} \right)
\]
when $\alpha>0$, for $|\arg(\zeta)|<2\pi/3$ and $2\pi/3<|\arg(\zeta)|<\pi$, respectively;
\item[(d)] it holds uniformly in $\C\setminus\big(I_+\cup I_-\cup(-\infty,0]\big)$ that
\[
\boldsymbol\Psi_\alpha(\zeta) = \boldsymbol S(\zeta)\left(\boldsymbol I+\mathcal{O}\left(\zeta^{-1/2}\right)\right)\exp\left\{2\zeta^{1/2}\sigma_3\right\}
\]
where \( \displaystyle \boldsymbol S(\zeta) := \frac{\zeta^{-\sigma_3/4}}{\sqrt2}\left(\begin{matrix} 1 & \ic \\ \ic & 1 \end{matrix}\right) \) and we take the principal branch of \( \zeta^{1/4} \).
\end{itemize}
Explicit construction of this matrix can be found in \cite{KMcLVAV04} (it uses modified Bessel and Hankel functions). Observe that
\begin{equation}
\label{S}
\boldsymbol S_+(\zeta) = \boldsymbol S_-(\zeta)\left(\begin{matrix} 0 & 1 \\ \ -1 & 0 \end{matrix}\right),
\end{equation}
since the principal branch of \( \zeta^{1/4} \) satisfies $\zeta_+^{1/4}=\ic\zeta^{1/4}_-$.

\subsubsection{Conformal Map}

In this section we define a conformal map that will carry \( U_a \) into \( \zeta \)-plane. Set
\begin{equation}
\label{zeta-a-1}
\zeta_a(z) := \left(\frac14 \log\big(z/\varphi^2(z)\big)\right)^2, \quad z\in U_a,
\end{equation}
where the function \( \varphi \) is given by \eqref{varphi}. It follows from \eqref{equilibrium} that \( \zeta_a \) is holomorphic across \( F_a \). It also follows from the explicit representation of \( \varphi \) that \( \zeta_a \) vanishes at \( a \). Moreover, since 
\[
\frac z{\varphi^2(z)} = 1 - \frac{2w(z)}{z+b+w(z)},
\]
the zero of \( \zeta_a \) at \( a \) is necessarily simple. Notice also that \( |\varphi_+|=|\varphi_-| \) on \( [a^{-1},a] \) and therefore \( |s/\varphi_\pm^2(s)|  \equiv 1 \) there according to \eqref{equilibrium}. Hence, \( \zeta_a \) maps \( F_a \) into the negative reals. It is also simple to check that the rest of the reals in \( U_a \) are mapped into the positive reals by \( \zeta_a \). Set
\[
U_a^\pm := U_a \cap \left\{
\begin{array}{ll}
\big\{\pm\im(z)>0\big\}, & a<0, \smallskip \\
\big\{\mp\im(z)>0\big\}, & a>0.
\end{array}
\right.
\]
It should be clear from the previous discussion that \( \zeta_a(U_a^\pm) \subset \big\{\pm\im(z)>0\big\} \). Let \( \Gamma_0^\pm : = \Gamma_0\cap U_a^\pm \). Notice that according to the chosen orientation of \( \Gamma_0 \), \( \Gamma_0^+ \) is oriented towards \( a \) and \( \Gamma_0^- \) is oriented away from \( a \). As we have had some freedom in choosing the curve \( \Gamma_0 \), we shall choose it so that \( \zeta_a(\Gamma_0^\pm) \subset I_\pm \). 

Finally, in what follows we understand under \( \zeta_a^{1/2} \) the branch given by the expression in parenthesis in \eqref{zeta-a-1} with the branch cut along \( F_a \). In particular, it holds that
\begin{equation}
\label{E-zeta-a-1}
\exp\left\{2n\zeta_a^{1/2}(z)\sigma_3\right\} = z^{-n\sigma_3/2}\boldsymbol D(z),
\end{equation}
where the matrix \( \boldsymbol D \) was defined in \eqref{D1}. Similarly, we let \( \zeta_a^{1/4} \) to be the branch that maps \( U_a \) into the sector \( |\arg(z)|<\pi/4 \). For instance, it holds that \( \zeta_{a+}^{1/4} = \ic \zeta_{a-}^{1/4} \) on \( F_a \).

\subsubsection{Matrix $\boldsymbol P_a$}

Under the conditions placed on the class \( \mathcal W_1 \), it holds that
\[
\rho(z)=\frac{h_*(z)}{w(z)}\left\{\begin{array}{ll}(a-z)^{\alpha+1/2}, & a<0, \smallskip \\ (z-a)^{\alpha+1/2}, & a>0, \end{array}\right. \quad z\in U_a\setminus[-1,1],
\]
where $h_*$ is non-vanishing and holomorphic in $U_a$, $\alpha>-1$, and the $\alpha$-roots are principal. Recall also that \( \rho \) on \( F_a \) is defined as the trace of \( \rho_{|U_a^+} \) on \( F_a \). This can be equivalently stated as
\[
\rho(z) = \pm\rho_*(z)\left\{\begin{array}{ll}(a-z)^{\alpha/2}, & a<0, \smallskip \\ (z-a)^{\alpha/2}, & a>0, \end{array}\right. \quad z\in U_a^\pm,
\]
where \( \rho_* \) is  non-vanishing and holomorphic in $U_a$. Set
\[
r_a(z) := \sqrt{\rho_*(z)}\left\{\begin{array}{ll}(z-a)^{\alpha/2}, & a<0, \smallskip \\ (a-z)^{\alpha/2}, & a>0, \end{array}\right.
\]
where the branches are again principal. Then $r_a$ is a holomorphic and non-vanishing function in $U_a\setminus F_a$ that satisfies
\[
\left\{
\begin{array}{ll}
r_{a+}(s)r_{a-}(s) = \rho(s), & s\in F_a^\circ\cap U_a, \medskip \\
r_a^2(z) = \rho(z)e^{\pi\ic\alpha}, & z\in\Gamma_0^+, \medskip \\
r_a^2(z) = -\rho(z)e^{-\pi\ic\alpha}, & z\in\Gamma_0^-.
\end{array}
\right.
\]
The above relations and \hyperref[rhpsiA]{\rhpsiA}(a,b,c) imply that
\begin{equation}
\label{Pa}
\boldsymbol P_a(z) := \boldsymbol E_a(z) \boldsymbol\Psi_\alpha\left(n^2\zeta_a(z)\right) z^{n\sigma_3/2}r_a^{-\sigma_3}(z)
\end{equation}
satisfies  \hyperref[rhp]{\rhp}(a,b,c), where $\boldsymbol E_a$ is a holomorphic matrix function (notice that the orientation of \( \zeta_a(\Gamma_0^-) \) is opposite from the orientation of \( I_- \)). It further follows from \hyperref[rhn]{\rhn}(b), \eqref{equilibrium}, and \eqref{S} that
\begin{equation}
\label{Ea}
\boldsymbol E_a(z) := \big(\boldsymbol{MD}^{-1}\big)(z)r_a^{\sigma_3}(z)\boldsymbol S^{-1}\big(n^2\zeta_a(z)\big)
\end{equation}
is holomorphic in $U_a\setminus\{a\}$. Since $|r_a(z)|\sim|z-a|^{\alpha/2}$, \( \boldsymbol S^{-1}\big(n^2\zeta_a(z)\big)\sim |z-a|^{\sigma_3/4} \), and
\[
\boldsymbol M(z) = \left(\begin{matrix} |z-a|^{-\alpha/2-1/4} & |z-a|^{\alpha/2-1/4} \medskip \\ |z-a|^{-\alpha/2-1/4} & |z-a|^{\alpha/2-1/4} \end{matrix}\right),
\]
$\boldsymbol E_a$ is in fact holomorphic in $U_a$. Finally, \hyperref[rhp]{\rhp}(d) follows now from \eqref{E-zeta-a-1} and \hyperref[rhpsiA]{\rhpsiA}(d).

\section{Proof of Theorem~\ref{thm:main2} when \( \rho\in\mathcal W_1 \)}

As usual, Sections~\ref{ssec:ortho}--\ref{ssec:irh} translate identically to the present case after \hyperref[rhy]{\rhy}(c) is replaced by~\eqref{Yc}.

\subsection{Opening of Lenses}

We choose \( \Gamma_0,\Gamma_\infty \) as in Section~\ref{ssec:ol} except for requiring \( \Gamma_0 \) to touch \( F \) at \( a \) and \( \Gamma_\infty \) to touch \( F \) at \( a^{-1} \), see Figure~\ref{F-lenses}. 
\begin{figure}[ht!]
\centering
\subfigure[\(v=b\)]{\includegraphics[scale=.5]{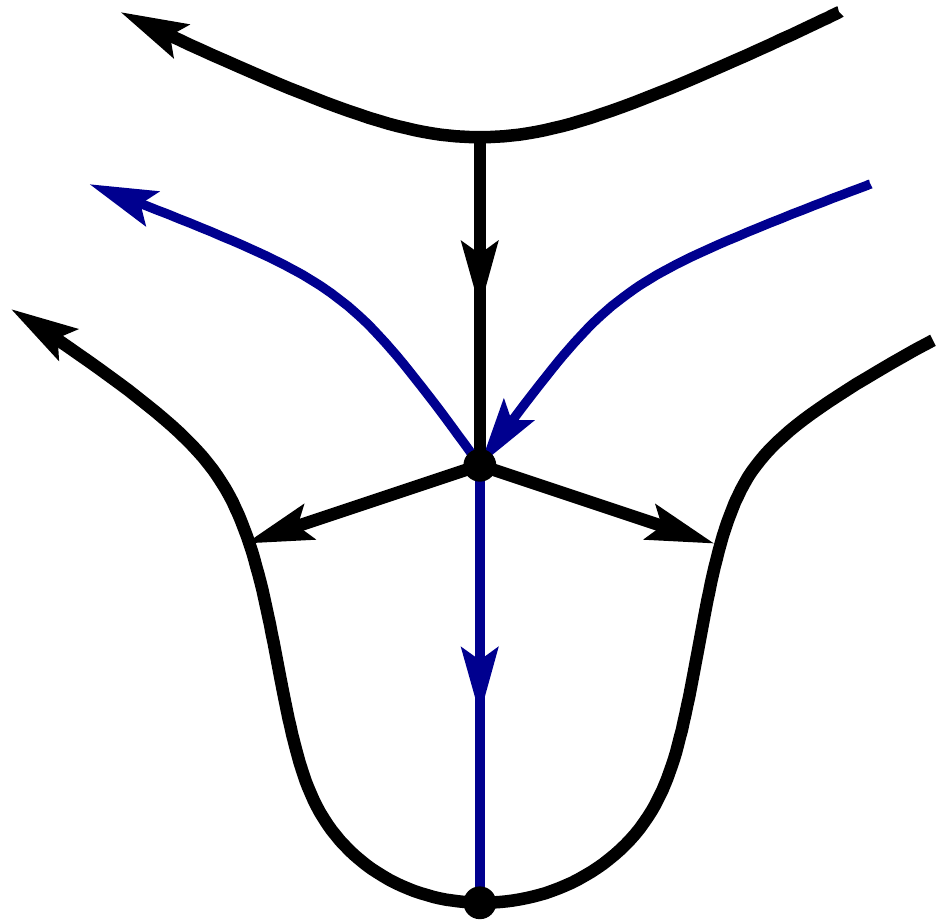}}
\begin{picture}(0,0)
\put(-68,55){$v$}
\put(-12,102){$F_1$}
\put(-143,102){$F_{-1}$}
\put(-85,37){$F_a$}
\put(-68,120){$\Gamma_\infty$}
\put(-43,10){$\Gamma_0$}
\put(-105,65){$\Gamma_{v,-1}$}
\put(-84,90){$\Gamma_v$}
\put(-65,37){$\Omega_{v,1}$}
\put(-65,102){$\Omega_{\infty,1}$}
\put(-50,75){$\Omega_{0,1}$}
\put(-110,110){\textcolor{gray}{$\oplus$}}
\put(-35,110){\textcolor{gray}{$\oplus$}}
\put(-125,90){\textcolor{gray}{$\ominus$}}
\put(-20,90){\textcolor{gray}{$\ominus$}}
\put(-83,20){\textcolor{gray}{$\oplus$}}
\put(-65,20){\textcolor{gray}{$\ominus$}}
\end{picture}
\quad\quad\quad
\subfigure[\(v=b^{-1}\)]{\includegraphics[scale=.5]{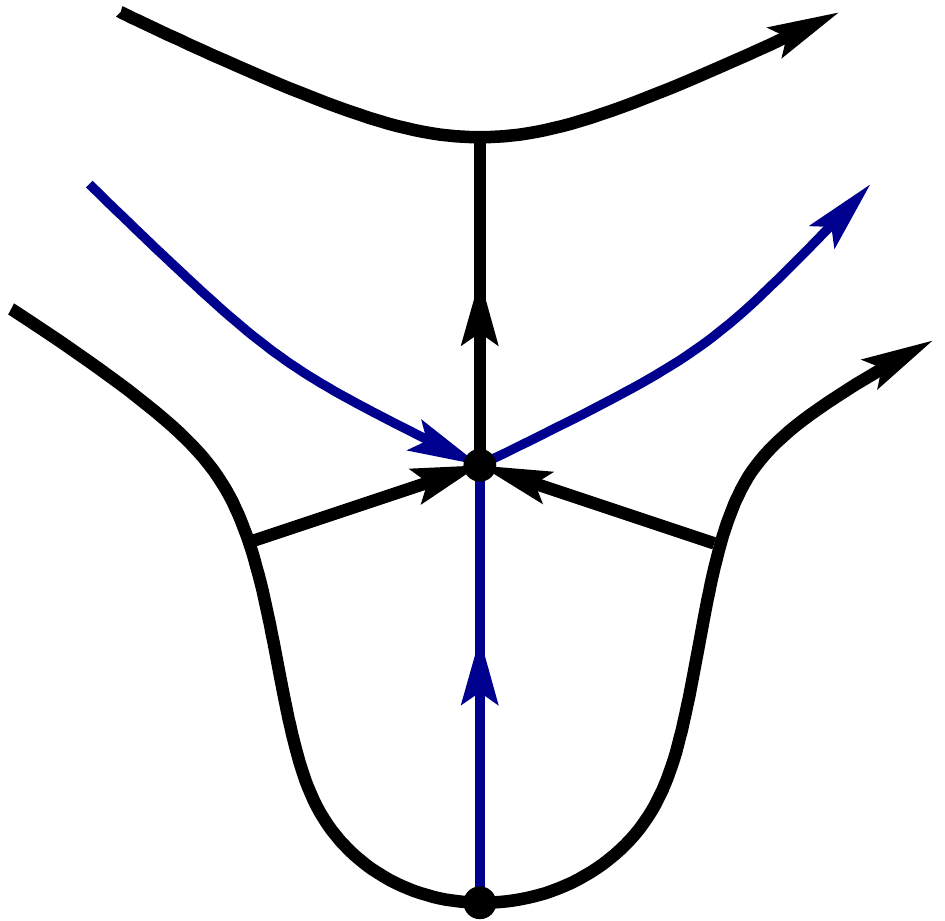}}
\begin{picture}(0,0)
\put(-68,55){$v$}
\put(-12,102){$F_1$}
\put(-143,102){$F_{-1}$}
\put(-93,37){$F_{a^{-1}}$}
\put(-68,120){$\Gamma_0$}
\put(-43,10){$\Gamma_\infty$}
\put(-105,65){$\Gamma_{v,-1}$}
\put(-84,90){$\Gamma_v$}
\put(-65,37){$\Omega_{v,1}$}
\put(-60,95){$\Omega_{0,1}$}
\put(-55,65){$\Omega_{\infty,1}$}
\put(-110,110){\textcolor{gray}{$\ominus$}}
\put(-35,110){\textcolor{gray}{$\ominus$}}
\put(-125,85){\textcolor{gray}{$\oplus$}}
\put(-20,85){\textcolor{gray}{$\oplus$}}
\put(-83,20){\textcolor{gray}{$\ominus$}}
\put(-65,20){\textcolor{gray}{$\oplus$}}
\end{picture}
\caption{\small Schematic representation of the set \( F \) (thiner arcs), arcs \( \Gamma_v\), \( \Gamma_{v,-1} \) (labeled), and \( \Gamma_{v,1} \) (not labeled, placed symmetrically across \( \Gamma_{v,-1} \)), and the domains \( \Omega_{\infty,1} \),  \( \Omega_{0,1} \), \( \Omega_{v,1} \) (labeled) and \( \Omega_{\infty,-1} \),  \( \Omega_{0,-1} \), \( \Omega_{v,-1} \) (not labeled, placed symmetrically across the labeled ones) locally around \( v\in\{b,b^{-1}\} \). The symbols \( \oplus \) and \( \ominus \) indicated whether the corresponding domain is a part of \( \Omega_+ \) or \( \Omega_- \).}
\label{F-lenses}
\end{figure}
Moreover, we also introduce open oriented arcs \( \Gamma_{v,1}, \Gamma_{v,-1}, \Gamma_v \) connecting \( v \) to \( \Gamma_0\cup\Gamma_\infty \), \( v\in\big\{b,b^{-1}\big\} \), as shown on Figure~\ref{F-lenses}. Besides the interior domain of \( \Gamma_0 \) and the exterior domain of \( \Gamma_\infty \), the union of the introduced arcs, say \( \Gamma \), together with \( F \) delimits eight domains that we label as on Figure~\ref{F-lenses}. Observe that \( \rho \) has holomorphic and non-vanishing extension to each of these eight domains (we can bring arcs \( \Gamma_0,\Gamma_\infty \) closer to \( F \) if necessary). We assume that all the introduced arcs are smooth. Define
\begin{equation}
\label{X2}
\boldsymbol X(z) := \boldsymbol Y(z)\left\{
\begin{array}{rl}
\left(\begin{matrix} 1 & 0 \\ \pm z^n/\rho(z) & 1  \end{matrix}\right), & z\in\Omega_\pm, \medskip \\
\boldsymbol I, & \text{otherwise}.
\end{array}
\right.
\end{equation}
where \( \Omega_+ := \Omega_{\infty,1}\cup\Omega_{\infty,-1}\cup\Omega_{b,-1}\cup\Omega_{b^{-1},1} \) and \( \Omega_- := \Omega_{0,1}\cup\Omega_{0,-1}\cup\Omega_{b,1}\cup\Omega_{b^{-1},-1} \). Then the Riemann-Hilbert problem for \( \boldsymbol X \) can be formulated as follows:
\begin{itemize}
\label{rhx2}
\item[(a)] $\boldsymbol X$ is analytic in $\C\setminus (F\cup\Gamma)$ and $\displaystyle \lim_{z\to\infty} {\boldsymbol X}(z)z^{-n\sigma_3} = \boldsymbol I$;
\item[(b)] $\boldsymbol X$ has continuous traces at the smooth points of $F\cup\Gamma$ that satisfy
\[
\boldsymbol X_+(s) =\boldsymbol X_-(s) \left(\begin{matrix} 0 & \rho(s)/s^n \\ -s^n/\rho(s) & 0 \end{matrix}\right),
\]
for \(s\in F^\circ \), as well as
\[
\boldsymbol X_+(s) =\boldsymbol X_-(s) \left(\begin{matrix} 1 & 0 \\ \pm s^n/\rho(s) & 1  \end{matrix}\right),
\]
for \( s\in\big(\Gamma_0\cup\Gamma_\infty\big)\setminus\big\{a,a^{-1}\big\} \), where we need to use the sign \( - \) for the portion of \(\Gamma_0 \) bordering \( \Omega_{b,-1} \) and the part of \( \Gamma_\infty \) bordering \( \Omega_{b^{-1},-1} \), and
\[
\boldsymbol X_+(s) =  \boldsymbol X_-(s) \left(\begin{matrix} 1 & 0 \medskip \\ s^nR(s) & 1 \end{matrix}\right),
\]
for  \( s\in \cup_{v\in\{b,b^{-1}\}}\big(\Gamma_v \cup \Gamma_{v,1} \cup \Gamma_{v,-1}\big) \), where we put \( \rho_e:=\rho_{|F_e} \) for \( e\in\big\{a,a^{-1},1,-1\big\} \) and set
\[
R(s) := \left\{
\begin{array}{ll}
-\rho_{a^{\pm1}}(s)/(\rho_1\rho_{-1})(s), & s\in\Gamma_{b^{\pm1}}, \smallskip \\
\rho_1(s)/(\rho_{-1}\rho_{a^{\pm1}})(s), & s\in\Gamma_{b^{\pm1},-1}, \smallskip \\
\rho_{-1}(s)/(\rho_{a^{\pm1}}\rho_1)(s), & s\in\Gamma_{b^{\pm1},1};
\end{array}
\right.
\]
\item[(c)] \( \boldsymbol X \) satisfies \hyperref[rhy]{\rhy}(c) except around \( a \) within \( \Omega_{b,1}\cup\Omega_{b,-1} \) where it behaves like \eqref{Xc}.
\end{itemize}

With the above changes, Lemma~\ref{lem:rhx} holds with \eqref{X} replaced by \eqref{X2}.

\subsection{Model and Local RH Problems}

Model Riemann-Hilbert problem \hyperref[rhn]{\rhn} is formulated and solved exactly as in the case \( \rho\in\mathcal W_2 \) for \( n\in \N_\varepsilon \). 

Let now \( U_e \) be an open set around \( e\in E \). In the case of \( U_{b^{\pm1}} \) we shall further assume that these sets do not intersect \( \Gamma_0 \cup \Gamma_\infty \) and completely contain \(  \Gamma_{b^{\pm1}},\Gamma_{b^{\pm1},1}, \Gamma_{b^{\pm1},-1} \), see Figure~\ref{F-local}.
\begin{figure}[ht!]
\centering
\includegraphics[scale=.5]{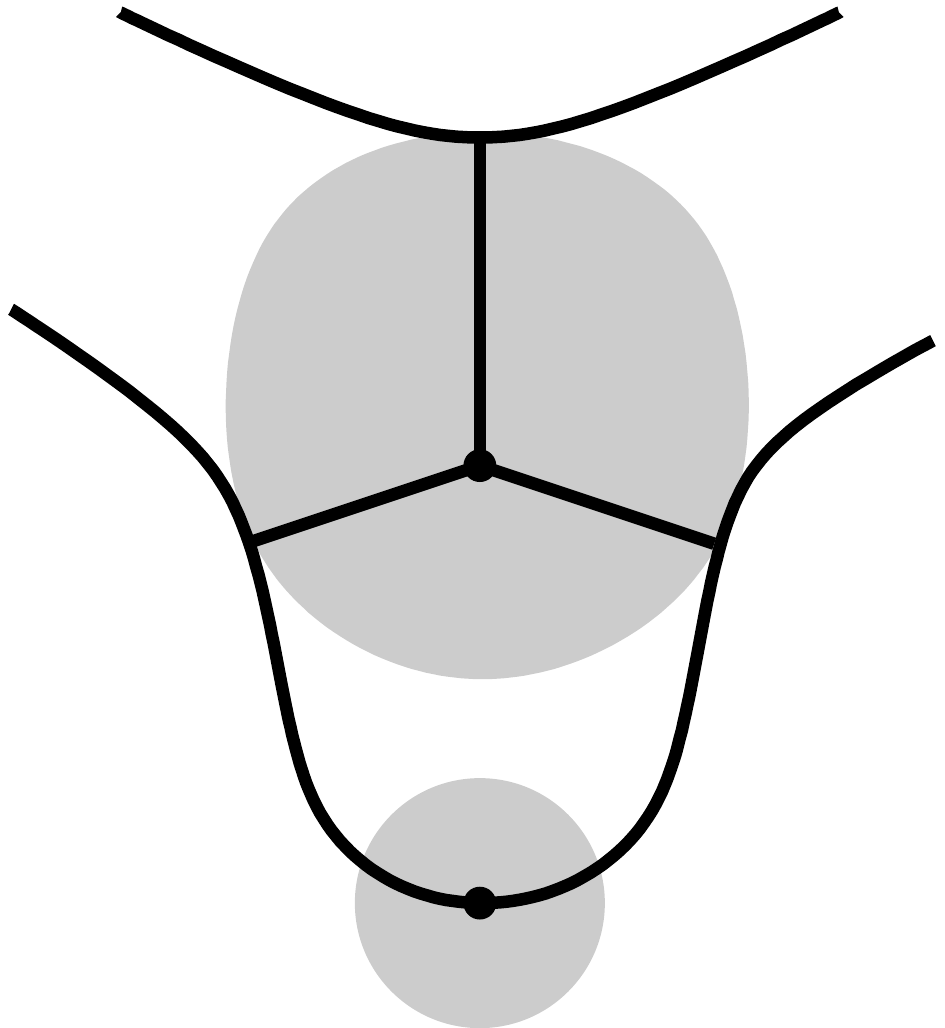}
\begin{picture}(0,0)
\put(-65,100){$U_{b^{\pm1}}$}
\put(-80,25){$U_{a^{\pm1}}$}
\put(-75,67){$b^{\pm1}$}
\end{picture}
\caption{\small Schematic representation of the open sets \( U_e \) for \( e\in E \).}
\label{F-local}
\end{figure}
Define \( \boldsymbol D(z) := \Phi(\z)^{n\sigma_3} \), \( \z\in D_{\mathcal Q} \), where \( \Phi \) is given by \eqref{Phi} and the open set \( D_{\mathcal Q} \) was defined just before Theorem~\ref{thm:NS2}. As in Section~\ref{ssec:lrh} we shall need to solve \hyperref[rhp]{\rhp} for all \( e\in E \). As in the previous proof, we postpone the construction of these matrix-functions until the end of the section.

\subsection{RH Problem with Small Jumps} 

Let
\[
\Sigma = \bigcup_{e\in E} \partial U_e \cup \left[\big( \Gamma_0\cup \Gamma_\infty)\setminus\bigcup_{e\in E} \overline U_e \right].
\]
The Riemann-Hilbert problem \hyperref[rhz]{\rhz} now needs to be formulated as follows:
\begin{itemize}
\label{rhz2}
\item[(a)] $\boldsymbol{Z}$ is a holomorphic matrix function in $\overline\C\setminus\Sigma$ and $\boldsymbol{Z}(\infty)=\boldsymbol{I}$;
\item[(b)] $\boldsymbol{Z}$ has continuous traces on $\big(\Gamma_0\cup \Gamma_\infty\big)\setminus\bigcup_{e\in E} \overline U_e $ that satisfy
\[
\boldsymbol Z_+(s)  = \boldsymbol Z_- (s)\boldsymbol M(s) \left(\begin{matrix} 1 & 0 \\ \pm s^n/\rho(s) & 1 \end{matrix}\right) \boldsymbol M^{-1}(s),
\]
where the choice of the sign \( \pm \) is the same as in the second relation in  \hyperref[rhx2]{\rhx}(b); and
\[
\boldsymbol Z_+(s)  = \boldsymbol Z_- (s)\big(\boldsymbol P_e \boldsymbol M^{-1}\big)(s)
\]
on \( \partial U_e \) for each \( e\in E \).
\end{itemize}

Then the following lemma takes place.
\begin{lemma}
\label{lem:rhz2}
For \( n\in\N_\varepsilon \) large enough, a solution of \hyperref[rhz2]{\rhz} exists and satisfies
\(
\boldsymbol{Z}=\boldsymbol{I}+ \mathcal O\big( 1/n\big)
\)
where $\mathcal O(\cdot)$ holds uniformly in $\overline\C$ and depends on \( \varepsilon \).
\end{lemma}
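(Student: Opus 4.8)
The plan is to show that \hyperref[rhz2]{\rhz} is a small-norm Riemann-Hilbert problem whose jump matrix is $\boldsymbol I+\mathcal O(1/n)$ in $L^\infty\cap L^2(\Sigma)$, and then to invoke the same small-norm theory used in the previous two cases. The contour $\Sigma$ splits into two qualitatively different pieces, which I would estimate separately. On the lens boundaries $(\Gamma_0\cup\Gamma_\infty)\setminus\bigcup_{e\in E}\overline U_e$ the jump equals the left-hand side of \eqref{Zjump} (with the sign $\pm$ prescribed as in \hyperref[rhx2]{\rhx}(b)), i.e.\ $\boldsymbol I$ plus a matrix whose entries are $\pm\gamma_n\gamma_{n-1}^\star\,s^n/(hw)(s)$ times quadratic expressions in $\Rn$ and $\mathcal R_{n-1}^\star$. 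Using \eqref{calRn-bound} together with \eqref{fun-g} and the strict inequality $2g(z)-\log|z|>0$ off $F$, the maximum principle for harmonic functions bounds each such entry by $C_\varepsilon\,c_\varepsilon^n$ with $c_\varepsilon<1$, provided $\gamma_n\gamma_{n-1}^\star$ stays bounded; the latter boundedness for $n\in\N_\varepsilon$ was already secured in the $\mathcal W_2$ analysis via Lemma~\ref{lem:JIP} and the compactness argument of Theorem~\ref{thm:NS2}. Thus this portion of the jump is $\boldsymbol I+\mathcal O(c_\varepsilon^n)$.

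Next I would treat the circles $\partial U_e$, $e\in E$. By \hyperref[rhp]{\rhp}(d) we have $\boldsymbol P_e=\boldsymbol{MD}^{-1}\big(\boldsymbol I+\mathcal O(1/n)\big)\boldsymbol D$ on $\partial U_e$, whence
\[
\big(\boldsymbol P_e\boldsymbol M^{-1}\big)(s)=\boldsymbol I+\big(\boldsymbol{MD}^{-1}\big)(s)\,\mathcal O(1/n)\,\big(\boldsymbol{DM}^{-1}\big)(s),
\]
so the task reduces to bounding $\boldsymbol{MD}^{-1}$ and its inverse uniformly in $n\in\N_\varepsilon$ on each $\partial U_e$. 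Since $\boldsymbol D=\Phi^{n\sigma_3}$ and the entries of $\boldsymbol M$ carry the factors $\Phi^{\pm n}$ through $\Qn,\Rn$ and $\mathcal Q_{n-1}^\star,\mathcal R_{n-1}^\star$ (see \eqref{calQn}, \eqref{calRn}, \eqref{Psin}, and the definition of $\Psi_{n-1}^\star$), forming $\boldsymbol{MD}^{-1}$ cancels the growing powers of $\Phi$ and leaves only products of the bounded Abel-map, Cauchy, and theta factors $A_\sigma$, $S_h$, $\Theta_n$, $\Theta(\cdot;\z_{n-1}^\star)$. These are controlled away from the ramification points and away from $\infty^{(0)},0^{(1)},\z_n$ by \eqref{AS-bound}, \eqref{compare}, and the compactness estimates recorded in Theorem~\ref{thm:NS2}; the restriction $n\in\N_\varepsilon$, through Lemma~\ref{lem:JIP}, keeps $\z_n$ and $\z_{n-1}^\star$ away from the relevant base points, so the bound is uniform. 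Consequently the jump of $\boldsymbol Z$ on $\bigcup_{e\in E}\partial U_e$ is $\boldsymbol I+\mathcal O(1/n)$, which dominates the geometrically small contribution on the lens boundaries and accounts for the $1/n$ rate (in contrast with the purely geometric rate of the $\mathcal W_2$ case, which had no Bessel parametrices).

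With the total jump shown to be $\boldsymbol I+\mathcal O(1/n)$ in $L^\infty\cap L^2(\Sigma)$, the existence of $\boldsymbol Z$ for all $n\in\N_\varepsilon$ large enough, together with $\boldsymbol Z=\boldsymbol I+\mathcal O(1/n)$ uniformly in $\overline\C$ (with the implied constant depending on $\varepsilon$), follows from exactly the same argument as in the earlier cases, namely from \cite[Corollary~7.108]{Deift}.

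The hard part will be the uniform boundedness of $\boldsymbol{MD}^{-1}$ on $\partial U_e$ in the presence of the floating zero $\z_n$: one must verify that neither $\z_n$ nor $\z_{n-1}^\star$ approaches the circles $\partial U_e$ or the poles of the theta quotients in a manner not offset by the construction of $\boldsymbol P_e$, and that the prefactor $\gamma_n\gamma_{n-1}^\star$ and the conformal maps entering the local parametrices remain simultaneously under control for every $e\in E$. This is precisely where the passage to the subsequence $n\in\N_\varepsilon$ and the compactness of the theta-function families established in Theorem~\ref{thm:NS2} are indispensable.
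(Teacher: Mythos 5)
Your overall architecture coincides with the paper's: split \( \Sigma \) into the lens boundaries, where the jump is geometrically small exactly as in the \( \mathcal W_2 \) case (via \eqref{calRn-bound}, \eqref{fun-g}, the maximum principle, and the boundedness of \( |\gamma_n\gamma_{n-1}^\star| \) over \( \N_\varepsilon \)), and the circles \( \partial U_e \), where \hyperref[rhp]{\rhp}(d) reduces everything to uniform control of \( \boldsymbol{MD}^{-1} \) and \( \boldsymbol{DM}^{-1} \), after which small-norm theory closes the argument. Your treatment of the entries of \( \boldsymbol{MD}^{-1} \) is also the paper's: the powers \( \Phi^{\pm n} \) cancel, leaving \( \big( (A_{n\tau+m_n}S_h\Theta_n)(\z) \;\; \pm(A_{n\tau+m_n}S_h\Theta_n)(\z^*) \big) \) in the first row (and a similar expression in the second), which is uniformly bounded above by \eqref{AS-bound} and the compactness of the theta families from the proof of Theorem~\ref{thm:NS2} --- note that this upper bound holds \emph{regardless} of where \( \z_n \) sits, since a zero of \( \Theta_n \) never hurts an upper estimate.

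The genuine gap is in your plan for the inverse \( \boldsymbol{DM}^{-1} \). You propose to control it by arranging that \( \z_n \) and \( \z_{n-1}^\star \) stay away from \( \pi^{-1}(\partial U_e) \), and you assert that the restriction \( n\in\N_\varepsilon \) accomplishes this; it does not. By its definition \eqref{Ne} and Lemma~\ref{lem:JIP}, \( \N_\varepsilon \) only keeps \( \z_n \) away from \( \infty^{(1)} \) (hence \( \z_{n-1}^\star \) away from \( \infty^{(0)} \)); the Jacobi inversion point \( \z_n \) solving \eqref{jip} is otherwise unconstrained and may land on or arbitrarily near \( \pi^{-1}(\partial U_e) \) infinitely often. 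Near such a point the entries of \( \boldsymbol{MD}^{-1} \) have a zero, no entry-wise lower bound exists, and your argument stalls exactly at the step you yourself flag as the ``hard part.'' The missing idea --- the paper's device, which makes the location of \( \z_n \) completely irrelevant --- is the determinant identity: since \( \det(\boldsymbol D)\equiv 1 \),
\[
\det\big(\boldsymbol{MD}^{-1}\big) = \det(\boldsymbol M) = \big(\gamma_n\gamma_{n-1}^\star\big)^{-1},
\]
a constant in \( z \), so that \( \boldsymbol{DM}^{-1} = \gamma_n\gamma_{n-1}^\star\,\mathrm{adj}\big(\boldsymbol{MD}^{-1}\big) \). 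The uniform upper bound on the entries of \( \boldsymbol{MD}^{-1} \) together with the upper bound on \( |\gamma_n\gamma_{n-1}^\star| \) for \( n\in\N_\varepsilon \) (which you correctly recalled from the \( \mathcal W_2 \) analysis) then bounds the inverse with no reference to \( \z_n \) whatsoever. With this one observation inserted --- and with the inessential citation of \eqref{compare} dropped, as it concerns the ratios \( \mathcal Q_{n-1}^\star/\Qn \), \( \mathcal R_{n-1}^\star/\Rn \) rather than the parametrix matching --- your proof closes and agrees with the paper's.
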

\begin{proof}
The proof of the fact that the jump of \( \boldsymbol Z \) is geometrically small on \( \big(\Gamma_0\cup \Gamma_\infty\big)\setminus\bigcup_{e\in E} \overline U_e \) is the same as in the case \( \rho\in\mathcal W_2 \). Furthermore, we have that
\[
\boldsymbol P_e \boldsymbol M^{-1} = \boldsymbol I + \boldsymbol{MD}^{-1}\mathcal O(1/n)\boldsymbol{DM}^{-1}
\]
on \( \partial U_e \). It follows from \eqref{calQn}, \eqref{calRn}, \eqref{Psin}, and the equality \( \Phi(\z)\Phi(\z^*)=z \), see Lemma~\ref{lem:Phi}, that the first row of \( \boldsymbol{MD}^{-1} \) is equal to
\[
\left( \begin{matrix} (A_{n\tau+m_n}S_h\Theta_n)(\z) & \pm (A_{n\tau+m_n}S_h\Theta_n)(\z^*) \end{matrix} \right), \quad \z\in D_{\mathcal Q}.
\]
It was shown in the course of the proof of Theorem~\ref{thm:NS2} that these functions have uniformly bounded above moduli on compact subsets of \( \C \). Similarly, one can show that the same is true for the second row of \( \boldsymbol{MD}^{-1} \) as well. Since
\[
\det\big(\boldsymbol{MD}^{-1}\big) = \det(\boldsymbol M) = \big(\gamma_n\gamma_{n-1}^\star\big)^{-1},
\]
and the constants \( |\gamma_n\gamma_{n-1}^\star| \)  are uniformly bounded above for all \( n\in\N_\varepsilon \), we get that the jump of \( \boldsymbol Z \) is of order  \( \boldsymbol I + \mathcal O(1/n) \) on \( \partial U_e \) for each \( e\in  E \). The conclusion of the lemma now follows as in the case \( \rho\in\mathcal W_2 \).
\end{proof}

\subsection{Asymptotics} 

Formulae \eqref{asymptotics2} follow now exactly as in the case \( \rho\in\mathcal W_2 \).

\subsection{Solution of \hyperref[rhp]{\rhp} for \( e\in\big\{a,a^{-1}\big\} \)}

As in Section~\ref{ssec:hard}, we shall only construct the matrix \( \boldsymbol P_a \). The construction is still based on the matrix function \( \boldsymbol\Psi_\alpha \) solving \hyperref[rhpsiA]{\rhpsiA}. Again, we start by defining a special conformal map around \( a \).

\subsubsection{Conformal Map}

With the notation used in Section~\ref{ssec:ND}, define
\begin{equation}
\label{zeta-a-2}
\zeta_a(z) := \left(-\int_a^z\frac{v(s)}{4s}\dd s\right)^2, \quad z\in U_a.
\end{equation}
Since $v_+=-v_-$ on $F_a^\circ$, \( \zeta_a \) is holomorphic in $U_a$. Moreover, since $v$ has a square-root singularity at $a$, $\zeta_a$ has a simple zero at $a$. Thus, we can choose $U_a$ small enough so that $\zeta_a$ is conformal in $\overline U_a$. Recall that \( v(s)\dd s/s \) is purely imaginary on \( F_a^\circ \), see the last part of the proof of Lemma~\ref{lem:Phi}. Therefore, \( \zeta_a \) maps \( F_a \) into the negative reals. As we have had some freedom in choosing the curve \( \Gamma_0 \), we shall choose it within \( U_a \) so that the part of \( \Gamma_0 \) bordering \( \Omega_{b,1} \), say \( \Gamma_0^+ \), is mapped into \( I_+ \) and the part bordering \( \Omega_{b,-1} \), say \( \Gamma_0^- \), is mapped into \( I_- \). Notice that the orientation of \( \zeta_a(\Gamma_0^-) \) is the opposite from the one of \( I_- \).

In what follows, we understand under \( \zeta_a^{1/2} \) the branch given by the expression in parenthesis in \eqref{zeta-a-2}. Equations \eqref{Nuttall} and \eqref{Phi} yield that
\[
\zeta_a(z) = \left(\frac14\log\left(\Phi\big(z^{(0)}\big)/\Phi\big(z^{(1)}\big)\right)\right)^2, \quad z\in U_a.
\]
As \( \Phi(\z)\Phi(\z^*)\equiv z \), relation \eqref{E-zeta-a-1} remains valid for \( \zeta_a \) as above and \( \boldsymbol D(z)=\Phi(\z)^{n\sigma_3} \), \( \z\in D_\mathcal{Q} \). Finally, as in the case of real \( a \), it still holds that \( \zeta_{a+}^{1/4} = \ic \zeta_{a-}^{1/4} \) on \( F_a \).

\subsubsection{Matrix $\boldsymbol P_a$}

Let \( J_a \) be the arc in \( U_a \) emanating from \( a \) such that \( \zeta_a(J_a)\subset[0,\infty) \). According to the conditions placed on the class \( \mathcal W_1 \), it holds that
\[
\rho(z)=\rho_*(z)(z-a)^\alpha,
\]
where $\rho_*$ is non-vanishing and holomorphic in $U_a$ and \( (z-a)^\alpha \) is the branch holomorphic in \( U_a\setminus J_a \). Set \( U_a^\pm \) to be connected components of \( U_a\setminus(F_a\cup J_a) \) containing \( \Gamma_0^\pm \). Define
\[
r_a(z) := \sqrt{\rho_*(z)}(a-z)^{\alpha/2},
\]
where the branch \( (a-z)^{\alpha/2} \) is holomorphic in \( U_a\setminus F_a \) and chosen so
\[
 (a-z)^\alpha = e^{\pm\pi\ic\alpha}(z-a)^\alpha, \quad z\in U_a^\pm.
\]
Then $r_a$ is a holomorphic and non-vanishing function in $U_a\setminus F_a$ and satisfies
\[
\left\{
\begin{array}{ll}
r_{a+}(s)r_{a-}(s) = \rho(s), & s\in F_a\cap U_a, \medskip \\
r_a^2(z) = \rho(z)e^{\pm\pi\ic\alpha}, & z\in\Gamma_0^\pm.
\end{array}
\right.
\]
It can be readily verified now that a solution of \hyperref[rhp]{\rhp} for \( e=a \) is given by \eqref{Pa}, \eqref{Ea}.

\subsection{Solution of \hyperref[rhp]{\rhp} for \( e\in\big\{b,b^{-1}\big\} \)}

We shall construct \( \boldsymbol P_{b^{-1}} \) only as the construction of \( \boldsymbol P_b \) is almost identical.

\subsubsection{Model Problem}

Recall \eqref{rays}. Let  $\boldsymbol\Psi$ be a matrix-valued function such that
\begin{itemize}
\label{rhpsi}
\item[(a)] $\boldsymbol\Psi$ is holomorphic in $\C\setminus\big(I_+\cup I_-\cup(-\infty,\infty)\big)$;
\item[(b)] $\boldsymbol\Psi$ has continuous traces on $I_+\cup I_-\cup(-\infty,0)\cup(0,\infty)$ that satisfy
\[
\boldsymbol\Psi_+ = \boldsymbol\Psi_-
\left\{
\begin{array}{rll}
\left(\begin{matrix} 0 & 1 \\ -1 & 0 \end{matrix}\right) & \text{on} & (-\infty,0), \medskip \\
\left(\begin{matrix} 1 & 0 \\ 1 & 1 \end{matrix}\right) & \text{on} & I_\pm, \medskip \\
\left(\begin{matrix} 1 & 1 \\ 0 & 1 \end{matrix}\right) & \text{on} & (0,\infty);
\end{array}
\right.
\]
\item[(c)] \( \boldsymbol\Psi(\zeta)=\mathcal O(1) \) as $\zeta\to0$;
\item[(d)] $\boldsymbol\Psi$ has the following behavior near $\infty$:
\[
\boldsymbol\Psi(\zeta) = \boldsymbol S(\zeta) \left(\boldsymbol I+\mathcal{O}\left(\zeta^{-3/2}\right)\right) \exp\left\{-\frac23\zeta^{3/2}\sigma_3\right\}
\]
uniformly in $\C\setminus\big(I_+\cup I_-\cup(-\infty,\infty)\big)$, where \( \boldsymbol S(\zeta) \) was defined in \hyperref[rhpsiA]{\rhpsiA}(d).
\end{itemize}

Such a matrix function was constructed in \cite{DKMLVZ99b} with the help of Airy functions.

\subsubsection{Conformal Map}

With the notation used in Section~\ref{ssec:ND}, define
\begin{equation}
\label{zeta-b}
\zeta_{b^{-1}}(z) := \left(-\frac34\int_{b^{-1}}^z\frac{v(s)}{s}\dd s\right)^{2/3}, \quad z\in U_{b^{-1}}.
\end{equation}
Because $v_+=-v_-$ on $F_{a^{-1}}^\circ$, \( \zeta_{b^{-1}}^3 \) is holomorphic in $U_{b^{-1}}$. Moreover, since \( v \) vanishes as a square root when \( z\to b^{-1} \), \( \zeta_{b^{-1}}^3 \) has a cubic zero at \( b^{-1} \) and therefore \( \zeta_{b^{-1}} \) is holomorphic in \( U_{b^{-1}} \). The size of \( U_{b^{-1}}  \) can be adjusted so that \( \zeta_{b^{-1}}  \) is conformal in \( U_{b^{-1}}  \). Recall that the integral of \( v(s)\dd s/s \) is purely imaginary on \( F \). Hence, we can select such a branch in \eqref{zeta-b} that
\[
\zeta_{b^{-1}}(F_{a^{-1}}\cap U_{b^{-1}}) \subset (-\infty,0].
\]
Moreover, we always can adjust the system of arcs \( \Gamma \) so that
\[
\zeta_{b^{-1}}(\Gamma_{b^{-1},-1})\subset I_+, \quad \zeta_{b^{-1}}(\Gamma_{b^{-1},1})\subset I_-, \qandq \zeta_{b^{-1}}(\Gamma_{b^{-1}})\subset (0,\infty).
\]
In what follows, we understand under \( \zeta_{b^{-1}}^{3/2} \) the branch given by the expression in parenthesis in \eqref{zeta-b} and select the branch of \( \zeta_{b^{-1}}^{1/4} \) with the cut along \( F_{a^{-1}} \) satisfying \( \zeta_{b^{-1}+}^{1/4}=\ic \zeta_{b^{-1}-}^{1/4} \). 

Let \( z\in U_{b^{-1}}\setminus (F_{a^{-1}}\cup F_1) \) belong to the component containing \( F_{-1} \), say \( U_{b^{-1}}^1 \), see Figures~\ref{F-lenses} and~\ref{F-local}. Let \( \gamma \) be a path from \( a \) to \( b^{-1} \) and \( \gamma_z \) be a path from \( b^{-1} \) to \( z \) that lie entirely in \( U_{b^{-1}}^1 \). As usual denote by \( B^{(i)} \) the lift of the set \( B \) to \( \RS^{(i)} \). Then
\begin{equation}
\label{paths1}
\gamma^{(0)}\cup\gamma_z^{(0)} \qandq \gamma^{(0)}\cup\boldsymbol\alpha\cup\boldsymbol\beta\cup\gamma_z^{(1)}
\end{equation}
are paths from \( \boldsymbol a \) to \( z^{(0)} \) and \( z^{(1)} \), respectively, that belong to \( \RS_{\boldsymbol\alpha,\boldsymbol\beta} \) (technically, \( \boldsymbol\alpha,\boldsymbol\beta \) in \eqref{paths1} need to be deformed into homologous cycles that belong to \( \RS_{\boldsymbol\alpha,\boldsymbol\beta} \)). Then by using \eqref{paths1} in \eqref{Phi} and recalling \eqref{periods}, we get that
\[
\Phi\big(z^{(0)}\big)/\Phi\big(z^{(1)}\big) = \exp\left\{2\pi\ic(\omega-\tau)+\frac43\zeta_{b^{-1}}^{3/2}(z)\right\}.
\]
Let now \(z \in U_{b^{-1}}\setminus (F_{a^{-1}}\cup F_1) \) be in the component that does not contain \( F_{-1} \), say \( U_{b^{-1}}^2 \). Choose \( \gamma_z \) to be a part of this component. Then
\begin{equation}
\label{paths2}
\gamma^{(0)}\cup\gamma_z^{(0)} \qandq \gamma^{(0)}\cup\boldsymbol\alpha\cup-\boldsymbol\beta\cup\gamma_z^{(1)}
\end{equation}
are paths from \( \boldsymbol a \) to \( z^{(0)} \) and \( z^{(1)} \), respectively, that belong to \( \RS_{\boldsymbol\alpha,\boldsymbol\beta} \) (with the same caveat as before). Thus, we get from \eqref{paths2}, \eqref{Phi}, and \eqref{periods} that
\[
\Phi\big(z^{(0)}\big)/\Phi\big(z^{(1)}\big) = \exp\left\{-2\pi\ic(\omega+\tau)+\frac43\zeta_{b^{-1}}^{3/2}(z)\right\}.
\]
Altogether, we get that
\begin{equation}
\label{E-zeta-b}
\exp\left\{-\frac23n\zeta_{b^{-1}}^{3/2}(z)\right\} = \big(\boldsymbol{KJD}\big)(z)z^{-n\sigma_3/2}\boldsymbol J^{-1}(z),
\end{equation}
where
\[
\boldsymbol J(z) = \left\{
\begin{array}{rl}
\left(\begin{matrix} 0 & 1 \\ -1 & 0 \end{matrix} \right), & z\in D_0\cap U_{b^{-1}}, \medskip \\
\boldsymbol I, & \text{otherwise},
\end{array}
\right.
\]
and
\[
\boldsymbol K(z) :=\left\{\begin{array}{rl}
e^{\pi\ic(\omega-\tau)n\sigma_3}, & z\in U_{b^{-1}}^1, \medskip \\
e^{-\pi\ic(\omega+\tau)n\sigma_3}, & z\in U_{b^{-1}}^2.
\end{array}
\right.
\]

\subsubsection{Matrix \( \boldsymbol P_{b^{-1}} \)}

Set
\[
r_{b^{-1}}(z) := \left\{
\begin{array}{ll}
\sqrt{(\rho_1\rho_{a^{-1}})(z)/\rho_{-1}(z)}, & z\in U_{b^{-1}}^2, \medskip \\
\sqrt{(\rho_1\rho_{-1})(z)/\rho_{a^{-1}}(z)}, & z\in D_0 \cap U_{b^{-1}}^1, \medskip \\
\sqrt{(\rho_{a^{-1}}\rho_{-1})(z)/\rho_1(z)}, & z\in D_\infty \cap U_{b^{-1}}^1.
\end{array}
\right.
\]
Then it follows from \hyperref[rhpsi]{\rhpsi}(a,b,c) that
\[
\boldsymbol P_{b^{-1}}(z) := \boldsymbol E_{b^{-1}}(z)\boldsymbol \Psi\big(n^{2/3}\zeta_{b^{-1}}(z)\big)\boldsymbol J(z)z^{n\sigma_3/2}r_{b^{-1}}^{-\sigma_3}(z)
\]
satisfies \hyperref[rhp]{\rhp}(a,b,c) for \( e=b^{-1} \), where \( \boldsymbol E_{b^{-1}} \) is a holomorphic matrix in \( U_{b^{-1}} \). Thus, it only remains to choose \( \boldsymbol E_{b^{-1}} \) so that \hyperref[rhp]{\rhp}(d) is fulfilled. Set
\[
\boldsymbol E_{b^{-1}}(z) := \boldsymbol M(z) \boldsymbol D^{-1}(z)r_{b^{-1}}^{\sigma_3}(z)\boldsymbol J^{-1}(z)\boldsymbol K^{-1}(z) \boldsymbol S^{-1}\big(n^{2/3}\zeta_{b^{-1}}(z)\big).
\]
Recall that \( \boldsymbol D(z)=\Phi^{n\sigma_3}(\z) \) for \( \z\in D_{\mathcal Q} \). Denote the \( (1,1) \)-entry of \( \boldsymbol D(z) \) by \( d(z) \). Then
\begin{equation}
\label{D-jumps}
(d_-d_+)(s) = \left\{
\begin{array}{rl}
\Phi^n\big(s^{(1)}\big)\Phi^n\big(s^{(0)}\big), & s\in F_{-1}, \medskip \\
\Phi_+^n\big(s^{(1)}\big)\Phi_+^n\big(s^{(0)}\big), & s\in F_1, \medskip \\
\Phi_+^n\big(\s\big)\Phi_+^n\big(\s^*\big), & s\in F_{a^{-1}},
\end{array}
\right. = s^n\left\{
\begin{array}{rl}
1, & s\in F_{-1}, \medskip \\
e^{2\pi\ic\omega n}, & s\in F_1, \medskip \\
e^{2\pi\ic\tau n}, & s\in F_{a^{-1}},
\end{array}
\right.
\end{equation}
by the property \( \Phi(\z)\Phi(\z^*) = z \) and \eqref{Phi-jumps}, where the traces of \( d(z) \) are taken on the arcs in the complex plane and the traces of \( \Phi(\z) \) are taken on the cycles on \( \RS \). Using \hyperref[rhn]{\rhn}(b), \eqref{S}, \eqref{D-jumps}, and the explicit definitions of \( r_{b^{-1}} \), \( \boldsymbol J \), and \( \boldsymbol K \), it is tedious but straightforward to check that \( \boldsymbol E_{b^{-1}} \) is holomorphic in \( U_{b^{-1}}\setminus \big\{b^{-1}\big\} \). It also follows from \eqref{psin-endp} and the behavior of \( \boldsymbol S \) at zero that
\[
\boldsymbol E_{b^{-1}}(z) = \left(\begin{matrix} 1 & |z-b^{-1}|^{-1/2} \medskip \\ 1 & |z-b^{-1}|^{-1/2}  \end{matrix}\right)
\]
which yields that it is, in fact, holomorphic on the whole set \( U_{b^{-1}} \). The relation \hyperref[rhp]{\rhp}(d) now follows from \hyperref[rhpsi]{\rhpsi}(d) and \eqref{E-zeta-b}.

\end{document}